\numberwithin{equation}{section}
\def\eps{\varepsilon}
\def\mand{\qquad\mbox{and}\qquad}
\def\fl#1{\left\lfloor#1\right\rfloor}
\def\({\left(}
\def\){\right)}
\newcommand{\e}{\ensuremath{\mathbf{e}}}
\newcommand{\cB}{\ensuremath{\mathcal{B}}}
\newcommand{\cS}{\ensuremath{\mathcal{S}}}
\newcommand{\cT}{\ensuremath{\mathcal{T}}}
\newcommand{\cX}{\ensuremath{\mathcal{X}}}
\newcommand{\sL}{\ensuremath{\mathscr{L}}}
\newcommand{\sN}{\ensuremath{\mathscr{N}}}
\newcommand{\NN}{\ensuremath{\mathbb{N}}}
\newcommand{\PP}{\ensuremath{\mathbb{P}}}
\newcommand{\RR}{\ensuremath{\mathbb{R}}}
\newcommand{\ZZ}{\ensuremath{\mathbb{Z}}}
\newtheoremstyle{customthm}
{1em}                    % ABOVESPACE
{1em}                    % BELOWSPACE
{\itshape}               % BODYFONT
{}                       % INDENT
{\scshape}               % HEADFONT
{.}                      % HEADPUNCT
{5pt plus 1pt minus 1pt} % HEADSPACE
{}                       % CUSTOM-HEAD-SPEC
\newtheoremstyle{customrem}
{1em}                    % ABOVESPACE
{1em}                    % BELOWSPACE
{}                       % BODYFONT
{}                       % INDENT
{\scshape}               % HEADFONT
{.}                      % HEADPUNCT
{5pt plus 1pt minus 1pt} % HEADSPACE
{}                       % CUSTOM-HEAD-SPEC
\theoremstyle{customthm}
\newtheorem{X}{X}[section]
\newtheorem{theorem}[X]{Theorem}
\newtheorem{lemma}[X]{Lemma}
\theoremstyle{customrem}
\renewcommand{\le}{\ensuremath{\leqslant}}
\renewcommand{\ge}{\ensuremath{\geqslant}}
\def\fl#1{\left\lfloor#1\right\rfloor}
\renewcommand{\pod}[1]{\mathchoice
	{\allowbreak \if@display \mkern 5mu\else \mkern 5mu\fi (#1)}
	{\allowbreak \if@display \mkern 5mu\else \mkern 5mu\fi (#1)}
	{\mkern4mu(#1)}
	{\mkern4mu(#1)}
}
\newcommand*{\defeq}{\mathrel{\vcenter{\baselineskip0.5ex \lineskiplimit0pt
			\hbox{\scriptsize.}\hbox{\scriptsize.}}}%
	=}
\DeclareSymbolFont{EUEX}{U}{euex}{m}{n}
\DeclareSymbolFont{euexlargesymbols}{U}{euex}{m}{n}
\DeclareMathSymbol{\intop}{\mathop}{euexlargesymbols}{"52}
\def\int{\intop\nolimits}
\DeclareSymbolFont{euexsymbols}     {U}{euex}{m}{n}
\DeclareMathSymbol{\smallint}{\mathop}{euexsymbols}{"52}
\def\sums{
	\@ifnextchar[
	{\sums@i}
	{\ensuremath{\sum}}
}
\def\sums@i[#1]{
	\@ifnextchar[
	{\sums@ii{#1}}
	{\ensuremath{\sum_{#1}}}
}
\def\sums@ii#1[#2]{
	\@ifnextchar[
	{\sums@iii{#1}{#2}}
	{\ensuremath{\sum_{\substack{#1 \\ #2}}}}
}
\def\sums@iii#1#2[#3]{
	\@ifnextchar[
	{\sums@iv{#1}{#2}{#3}}
	{\ensuremath{\sum_{\substack{#1 \\ #2 \\ #3}}}}
}
\def\sums@iv#1#2#3[#4]{
	\@ifnextchar[
	{\sums@v{#1}{#2}{#3}{#4}}
	{\ensuremath{\sum_{\substack{#1 \\ #2 \\ #3 \\ #4}}}}
}
\def\sums@v#1#2#3#4[#5]{
	{\ensuremath{\sum_{\substack{#1 \\ #2 \\ #3 \\ #4 \\ #5}}}}
}
\def\sumss[#1]{
	\@ifnextchar[
	{\sumss@i[#1]}
	{
		\ifthenelse{\isempty{#1}}
		{\ensuremath{\sum}}
		{
			\ifthenelse{\equal{#1}{'}}
			{\ensuremath{\sideset{}{^{\prime}}{\sum}}}
			{\ensuremath{\sideset{}{^{#1}}{\sum}}}
		}
	}
}
\def\sumss@i[#1][#2]{
	\@ifnextchar[
	{\sumss@ii[#1]{#2}}
	{
		\ifthenelse{\isempty{#1}}
		{\ensuremath{\sum_{#2}}}
		{
			\ifthenelse{\equal{#1}{'}}
			{\ensuremath{\sideset{}{^{\prime}}{\sum}_{#2}}}
			{\ensuremath{\sideset{}{^{#1}}{\sum}_{#2}}}
		}
	}
}
\def\sumss@ii[#1]#2[#3]{
	\@ifnextchar[
	{\sumss@iii[#1]{#2}{#3}}
	{
		\ifthenelse{\isempty{#1}}
		{\ensuremath{\sum_{\substack{#2 \\ #3}}}}
		{
			\ifthenelse{\equal{#1}{'}}
			{\ensuremath{\sideset{}{^{\prime}}{\sum}_{\substack{#2 \\ #3}}}}
			{\ensuremath{\sideset{}{^{#1}}{\sum}_{\substack{#2 \\ #3}}}}
		}
	}
}
\def\sumss@iii[#1]#2#3[#4]{
	\@ifnextchar[
	{\sumss@iv[#1]{#2}{#3}{#4}}
	{
		\ifthenelse{\isempty{#1}}
		{\ensuremath{\sum_{\substack{#2 \\ #3 \\ #4}}}}
		{
			\ifthenelse{\equal{#1}{'}}
			{\ensuremath{\sideset{}{^{\prime}}{\sum}_{\substack{#2 \\ #3 \\ #4}}}}
			{\ensuremath{\sideset{}{^{#1}}{\sum}_{\substack{#2 \\ #3 \\ #4}}}}
		}
	}
}
\def\sumss@iv[#1]#2#3#4[#5]{
	\@ifnextchar[
	{\sumss@v[#1]{#2}{#3}{#4}{#5}}
	{
		\ifthenelse{\isempty{#1}}
		{\ensuremath{\sum_{\substack{#2 \\ #3 \\ #4 \\ #5}}}}
		{
			\ifthenelse{\equal{#1}{'}}
			{\ensuremath{\sideset{}{^{\prime}}{\sum}_{\substack{#2 \\ #3 \\ #4 \\ #5}}}}
			{\ensuremath{\sideset{}{^{#1}}{\sum}_{\substack{#2 \\ #3 \\ #4 \\ #5}}}}
		}
	}
}
\def\sumss@v[#1]#2#3#4#5[#6]{
	{\ifthenelse{\isempty{#1}}
		{\ensuremath{\sum_{\substack{#2 \\ #3 \\ #4 \\ #5 \\ #6 }}}}
		{
			\ifthenelse{\equal{#1}{'}}
			{\ensuremath{\sideset{}{^{\prime}}{\sum}_{\substack{#2 \\ #3 \\ #4 \\ #5 \\ #6 }}}}
			{\ensuremath{\sideset{}{^{#1}}{\sum}_{\substack{#2 \\ #3 \\ #4 \\ #5 \\ #6 }}}}
		}
	}
}
\def\sumsstxt[#1]{
	\@ifnextchar[
	{\sumsstxt@i[#1]}
	{
		\ifthenelse{\isempty{#1}}
		{\ensuremath{\textstyle\sum}}
		{
			\ifthenelse{\equal{#1}{'}}
			{\ensuremath{\sideset{}{^{\prime}}{\textstyle\sum}}}
			{\ensuremath{\sideset{}{^{#1}}{\textstyle\sum}}}
		}
	}
}
\def\sumsstxt@i[#1][#2]{
	\@ifnextchar[
	{\sumsstxt@ii[#1]{#2}}
	{
		\ifthenelse{\isempty{#1}}
		{\ensuremath{\textstyle\sum_{#2}}}
		{
			\ifthenelse{\equal{#1}{'}}
			{\ensuremath{\sideset{}{^{\prime}}{\textstyle\sum}_{#2}}}
			{\ensuremath{\sideset{}{^{#1}}{\textstyle\sum}_{#2}}}
		}
	}
}
\def\sumsstxt@ii[#1]#2[#3]{
	\@ifnextchar[
	{\sumsstxt@iii[#1]{#2}{#3}}
	{
		\ifthenelse{\isempty{#1}}
		{\ensuremath{\textstyle\sum_{\substack{#2 \\ #3}}}}
		{
			\ifthenelse{\equal{#1}{'}}
			{\ensuremath{\sideset{}{^{\prime}}{\textstyle\sum}_{\substack{#2 \\ #3}}}}
			{\ensuremath{\sideset{}{^{#1}}{\textstyle\sum}_{\substack{#2 \\ #3}}}}
		}
	}
}
\def\sumsstxt@iii[#1]#2#3[#4]{
	\@ifnextchar[
	{\sumsstxt@iv[#1]{#2}{#3}{#4}}
	{
		\ifthenelse{\isempty{#1}}
		{\ensuremath{\textstyle\sum_{\substack{#2 \\ #3 \\ #4}}}}
		{
			\ifthenelse{\equal{#1}{'}}
			{\ensuremath{\sideset{}{^{\prime}}{\textstyle\sum}_{\substack{#2 \\ #3 \\ #4}}}}
			{\ensuremath{\sideset{}{^{#1}}{\textstyle\sum}_{\substack{#2 \\ #3 \\ #4}}}}
		}
	}
}
\def\sumsstxt@iv[#1]#2#3#4[#5]{
	{\ifthenelse{\isempty{#1}}
		{\ensuremath{\textstyle\sum_{\substack{#2 \\ #3 \\ #4 \\ #5}}}}
		{
			\ifthenelse{\equal{#1}{'}}
			{\ensuremath{\sideset{}{^{\prime}}{\textstyle\sum}_{\substack{#2 \\ #3 \\ #4 \\ #5}}}}
			{\ensuremath{\sideset{}{^{#1}}{\textstyle\sum}_{\substack{#2 \\ #3 \\ #4 \\ #5}}}}
		}
	}
}
\def\prods{
	\@ifnextchar[
	{\prods@i}
	{\ensuremath{\prod}}
}
\def\prods@i[#1]{
	\@ifnextchar[
	{\prods@ii{#1}}
	{\ensuremath{\prod_{#1}}}
}
\def\prods@ii#1[#2]{
	\@ifnextchar[
	{\prods@iii{#1}{#2}}
	{\ensuremath{\prod_{\substack{#1 \\ #2}}}}
}
\def\prods@iii#1#2[#3]{
	\@ifnextchar[
	{\prods@iv{#1}{#2}{#3}}
	{\ensuremath{\prod_{\substack{#1 \\ #2 \\ #3}}}}
}
\def\prods@iv#1#2#3[#4]{
	{\ensuremath{\prod_{\substack{#1 \\ #2 \\ #3 \\ #4}}}}
}
\title[Piatetski--Shapiro primes in multiple Beatty sequences]
      {Piatetski--Shapiro primes in the intersection of multiple Beatty sequences}
\author[Victor Zhenyu Guo]{Victor Zhenyu Guo}
\address{School of Mathematics and Statistics, Xi'an Jiaotong University, Xi'an, Shaanxi, 710049, P. R. China}
\email{guozyv@xjtu.edu.cn}
\author[Jinjiang Li]{Jinjiang Li}
\address{Department of Mathematics, China University of Mining and Technology, Beijing, 100083, P. R. China}
\email{jinjiang.li.math@gmail.com}
\author[Min Zhang]{Min Zhang}
\address{School of Applied Science, Beijing Information Science and Technology University, Beijing, 100192, P. R. China}
\email{min.zhang.math@gmail.com}
\begin{document}
\footnotetext[1]{Jinjiang Li is the corresponding author.}

\begin{abstract}
Suppose that $\alpha_1, \alpha_2,\beta_1, \beta_2 \in\mathbb{R}$. Let $\alpha_1, \alpha_2 > 1$ be irrational and of finite type such that $1, \alpha_1^{-1}, \alpha_2^{-1}$ are linearly independent over $\mathbb{Q}$. Let $c$ be a real number in the range $1 < c < 12/11$. In this paper, it is proved that there exist infinitely many primes in the intersection of Beatty sequences $\cB_{\alpha_1,\beta_1} = \fl{\alpha_1 n + \beta_1}, \cB_{\alpha_2, \beta_2} = \fl{\alpha_2 n + \beta_2}$ and the Piatetski--Shapiro sequence $\sN^{(c)} = \fl{n^c}$. Moreover, we also give a sketch proof of Piatetski--Shapiro primes in the intersection of multiple Beatty sequences.
\end{abstract}

\maketitle

\begin{quote}
\textbf{MSC Numbers:} 11N05, 11L07, 11N80, 11B83
\end{quote}

\begin{quote}
\textbf{Keywords:} Beatty sequence, Piatetski--Shapiro prime, exponential sum
\end{quote}

%\newcommand{\tind}[1]{\ensuremath{\widetilde{\mathbf{1}}_{#1}}}

%%%%%%%%%%%%%%%%%%%%%%%%
%%%%% PAPER BEGINS %%%%%
%%%%%%%%%%%%%%%%%%%%%%%%

\section{Introduction}
\label{sec:intro}

The Piatetski--Shapiro sequences are sequences of the form
$$
\sN^{(c)} \defeq (\fl{n^c})_{n=1}^\infty\qquad(c>1,~c \not\in\NN).
$$
Such sequences have been named in honor of Piatetski--Shapiro, who \cite{PS}, in 1953, proved that $\sN^{(c)}$ contains infinitely many primes provided that $c\in(1,\frac{12}{11})$. More precisely, for such $c$ he showed that the counting function
$$
\pi^{(c)}(x) \defeq \# \Big\{\text{\rm prime~}p\le x : p\in\sN^{(c)}\Big\}
$$
satisfies the asymptotic property
$$
\pi^{(c)}(x) \sim \frac{x^{1/c}}{\log x} \qquad \text{~\rm as } x \to \infty.
$$
The range for $c$ of the above asymptotic formula in which it is known that $\sN^{(c)}$ contains infinitely many primes has been enlarged many times over the years and is currently known to hold for all $c\in(1,\frac{2817}{2426})$ thanks to Rivat and Sargos~\cite{RiSa}. Rivat and Wu~\cite{RiWu} also showed that there exist infinitely many Piatetski--Shapiro primes for $c \in (1, \frac{243}{205})$ by showing a lower bound of $\pi^{(c)}(x)$ with the expected order of magnitude. The same result is expected to hold for all larger values of $c$.  We remark that if $c\in(0,1)$ then $\sN^{(c)}$ contains all natural numbers, and hence all primes, particularly. The investigation of Piatetski--Shapiro primes is an approximation of the well--known conjecture that there exist infinitely many primes of the form $n^2+1$.

For fixed real numbers $\alpha$ and $\beta$, the associated non--homogeneous Beatty sequence is the sequence of integers defined by
$$
\cB_{\alpha,\beta} \defeq \(\fl{\alpha n+\beta}\)_{n=1}^\infty,
$$
where $\fl{t}$ denotes the integer part of any $t\in\RR$. Such sequences are also called generalized arithmetic progressions. If $\alpha$ is irrational, it follows from a classical exponential sum estimate of Vinogradov~\cite{Vino} that $\cB_{\alpha,\beta}$ contains infinitely many prime numbers; in fact, one has the asymptotic estimate
$$
\#\big\{\text{prime~}p\le x:p\in\cB_{\alpha,\beta}\big\}\sim
\alpha^{-1}\pi(x)  \qquad \text{\rm as\quad } x \to \infty,
$$
where $\pi(x)$ is the prime counting function. Moreover, Harman~\cite{Harm2} investigated the intersection of multiple Beatty sequences and showed the following result:

\emph{
\linebreak
Let $\xi$ be a positive integer and real numbers $\alpha_1, \dots, \alpha_{\xi}$ each exceeding $1$ be given such that
$$
1, \alpha_1^{-1}, \alpha_2^{-1}, \dots, \alpha_{\xi}^{-1} \text{ are linearly independent over } \mathbb{Q}.
$$
Then for any real numbers $\beta_1, \dots, \beta_{\xi}$, the intersection of $\cB_{\alpha_1,\beta_1}, \cB_{\alpha_2, \beta_2}, \dots, \cB_{\alpha_{\xi}, \beta_{\xi}}$ contains infinitely many primes. Indeed, the number of such primes up to $x$ equals to
$$
\frac{x}{\alpha_1 \cdots \alpha_{\xi} \log x} \big( 1 + o(1) \big).
$$
}

\noindent
It has been investigated in \cite{Guo1} that for $c\in(1,\frac{14}{13})$, there exist infinitely many primes in the intersection of a Beatty sequence and a Piatetski--Shapiro sequence with an asymptotic formula. Motivated by Harman's proof related to the intersection of multiple Beatty sequences, we show a corresponding result that there exist infinitely many primes in the intersection of multiple Beatty sequences and a Piatetski--Shapiro sequence. In this paper, we generalize and improve the previous theorem in~\cite{Guo1} and establish the following theorem.

\begin{theorem}
\label{thm:main}
Suppose that $\alpha_1, \alpha_2,\beta_1, \beta_2 \in\RR$. Let $\alpha_1, \alpha_2 > 1$ be irrational and of finite type such that
$$
1, \alpha_1^{-1}, \alpha_2^{-1} \text{ are linearly independent over } \mathbb{Q}.
$$
For $c \in (1, \frac{12}{11})$, there exist infinitely many primes in the intersection of Beatty sequences $\cB_{\alpha_1,\beta_1}, \cB_{\alpha_2, \beta_2}$ and the Piatetski--Shapiro sequence $\sN^{(c)}$. Moreover, the counting function
$$
\pi^{(c)}_{\alpha_1,\beta_1; \alpha_2, \beta_2} (x) \defeq \# \big\{ \text{\rm prime~}p \le x: p \in \cB_{\alpha_1,\beta_1} \cap \cB_{\alpha_2, \beta_2} \cap \sN^{(c)} \big\}
$$
satisfies
$$
\pi^{(c)}_{\alpha_1,\beta_1; \alpha_2, \beta_2} (x) = \frac{x^{1/c}}{{\alpha_1 \alpha_2} \log x} +O\(\frac{x^{1/c}}{\log^2x}\),
$$
where the implied constant depends only on $\alpha_1,\alpha_2$ and $c$.
\end{theorem}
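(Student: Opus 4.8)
The plan is to pass from $\pi^{(c)}_{\alpha_1,\beta_1;\alpha_2,\beta_2}(x)$ to a von~Mangoldt-weighted sum and then to reduce the whole problem to exponential sums. Since $c<12/11$ we have $\gamma:=1/c>11/12>\tfrac12$, so the prime powers $p^k$ with $k\ge2$ lying in $\sN^{(c)}\cap\cB_{\alpha_1,\beta_1}\cap\cB_{\alpha_2,\beta_2}$ contribute at most $\sum_{k\ge2}\vartheta(x^{1/k})\ll x^{1/2}\log x$, which is negligible; hence, by partial summation, it suffices to show
\[
\Sigma(x):=\sum_{n\le x}\Lambda(n)\,\ind{n\in\sN^{(c)}}\,\ind{n\in\cB_{\alpha_1,\beta_1}}\,\ind{n\in\cB_{\alpha_2,\beta_2}}=\frac{x^{1/c}}{\alpha_1\alpha_2}+O\!\left(x^{1/c}\exp\!\left(-c_0\sqrt{\log x}\,\right)\right)
\]
for some $c_0>0$. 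Substituting this into the identity relating $\pi^{(c)}_{\alpha_1,\beta_1;\alpha_2,\beta_2}(x)$ to its logarithmically weighted analogue and invoking the asymptotics of the logarithmic integral then yields the main term $x^{1/c}/(\alpha_1\alpha_2\log x)$ and the error term $O(x^{1/c}/\log^2x)$ claimed in the theorem.

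To evaluate $\Sigma(x)$ I would first linearise the three characteristic functions by means of the sawtooth function $\psi(t):=t-\fl t-\tfrac12$, using $\ind{n\in\sN^{(c)}}=(n+1)^\gamma-n^\gamma+\psi\bigl(-(n+1)^\gamma\bigr)-\psi\bigl(-n^\gamma\bigr)$ and, for $j=1,2$, $\ind{n\in\cB_{\alpha_j,\beta_j}}=\alpha_j^{-1}+\psi\bigl(\tfrac{\beta_j-n-1}{\alpha_j}\bigr)-\psi\bigl(\tfrac{\beta_j-n}{\alpha_j}\bigr)$. Multiplying these out, $\Sigma(x)$ decomposes into a main term $\alpha_1^{-1}\alpha_2^{-1}\sum_{n\le x}\Lambda(n)\bigl((n+1)^\gamma-n^\gamma\bigr)=x^{1/c}/(\alpha_1\alpha_2)+O(x^{1/c}e^{-c_0\sqrt{\log x}})$ (by the prime number theorem and Abel summation) plus finitely many \emph{error terms}, each of the shape $\sum_{n\le x}\Lambda(n)\,w(n)\prod_{i=1}^{r}\psi(\varphi_i(n))$ with $1\le r\le3$, where $w(n)$ is either $1$ or the slowly varying weight $(n+1)^\gamma-n^\gamma$ and each $\varphi_i(n)$ is one of $\pm(n+1)^\gamma$, $\pm n^\gamma$, or $\pm n/\alpha_j+\mathrm{const}$.

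For each error term I would insert Vaaler's trigonometric approximation $\psi(t)=\sum_{1\le|h|\le H}c_he(ht)+R_H(t)$, with $|c_h|\ll|h|^{-1}$ and $|R_H(t)|\le\sum_{|h|\le H}b_he(ht)$ for non-negative $b_h$ with $b_0\ll H^{-1}$, the parameter $H$ being a small power of $x$; the contribution of the $R_H$'s is bounded by applying the same estimates to the non-negative majorant, so the task reduces to estimating
\[
T(h_0,h_1,h_2):=\sum_{n\le x}\Lambda(n)\,w(n)\,e\bigl(h_0n^\gamma+(h_1\alpha_1^{-1}+h_2\alpha_2^{-1})n\bigr)
\]
uniformly for $|h_0|,|h_1|,|h_2|\le H$ and then summing $|T|$ against $|h_0h_1h_2|^{-1}$ (or $|h_i|^{-1}$ if fewer sawtooth factors are present); as the harmonic sums over the $h_i$ cost only powers of $\log x$, any power saving in $T$ is enough. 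After removing $w(n)$ by Abel summation and reducing the linear frequency modulo $1$, I would apply Vaughan's identity (or Heath--Brown's identity) to split $\Lambda$ into Type~I and Type~II bilinear sums. When $h_0\ne0$ the phase is curved, and since the linear term does not affect derivatives of order $\ge2$ it is a harmless rider: the van der Corput / exponent-pair estimates used in the proof of the classical Piatetski--Shapiro prime theorem apply uniformly in $h_1,h_2$ and give a power saving precisely in the range $c<12/11$. When $h_0=0$ and $(h_1,h_2)\neq(0,0)$ the phase is $\theta n$ with $\theta=h_1\alpha_1^{-1}+h_2\alpha_2^{-1}$, which is irrational by the hypothesis that $1,\alpha_1^{-1},\alpha_2^{-1}$ are linearly independent over $\QQ$; combined with a Diophantine estimate drawn from the finite-type hypothesis on $\alpha_1,\alpha_2$ (controlling $\|q\theta\|$ from below with polynomial dependence on $|h_1|,|h_2|$), Vinogradov's method yields $T\ll x^{1-\delta}$. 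The case $h_0=h_1=h_2=0$ does not arise among the error terms, so $\Sigma(x)$ has the asserted asymptotic.

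The principal obstacles I expect are: (i) the fully mixed error term, coming from $\psi(\pm n^\gamma)\psi(\pm n/\alpha_1)\psi(\pm n/\alpha_2)$, where one must check that the Type~I/Type~II decomposition together with the van der Corput handling of the curved frequency still delivers a power saving after the additional Cauchy--Schwarz forced by the bilinear structure, all within $c<12/11$; (ii) the uniform Diophantine input for the pure-Beatty terms --- bounding $\|q(h_1\alpha_1^{-1}+h_2\alpha_2^{-1})\|$ from below, with polynomial dependence on $|h_1|,|h_2|$, as these range up to $H$, so that the Vinogradov bounds survive the summation over $h_1,h_2$ (this is where linear independence over $\QQ$ and the finite type of $\alpha_1,\alpha_2$ must be used together, in the spirit of Harman's treatment of intersections of several Beatty sequences); and (iii) the optimisation of $H$ balancing the Vaaler error, the harmonic $h_i$-sums and the savings in $T(h_0,h_1,h_2)$, together with the bookkeeping needed to produce the clean error term $O(x^{1/c}/\log^2x)$.
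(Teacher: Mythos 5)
Your proposal follows essentially the same route as the paper: expand the three indicator functions via the sawtooth $\psi$, apply Vaaler's approximation, and reduce everything to sums $\sum\Lambda(n)\e\big(h_3 n^\gamma+(h_1\alpha_1^{-1}+h_2\alpha_2^{-1})n\big)$, treating the curved case ($h_3\neq0$) by Heath--Brown/Vaughan plus van der Corput (which forces $c<12/11$) and the purely linear case by Vinogradov-type bounds combined with the finite-type and linear-independence hypotheses, exactly the roles played by the paper's Lemmas \ref{lem:ttr}, \ref{lem:finite-type} and \ref{lem:exp}. The only differences are organizational (you carry the $\Lambda$-weight and prime-power/partial-summation bookkeeping globally and re-derive the main term from the prime number theorem, whereas the paper works with prime-indexed sums and quotes the Rivat--Sargos asymptotic for $\pi^{(c)}(x)$ for its main term $\cS_1$), and the obstacles you flag are precisely the points those lemmas settle.
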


By a similar method, we can also give a stronger revision of the theorem in \cite{Guo1}.

\begin{theorem}
\label{thm:2}
Suppose that $\alpha,\beta \in\RR$. Let $\alpha > 1$ be irrational and of finite type. For $c \in (1, \frac{12}{11})$, there exist infinitely many primes in the intersection of the Beatty sequence $\cB_{\alpha,\beta}$ and the Piatetski--Shapiro sequence $\sN^{(c)}$. Moreover, the counting function
$$
\pi^{(c)}_{\alpha,\beta} (x) \defeq \# \big\{ \text{\rm prime~}p \le x: p \in \cB_{\alpha,\beta} \cap \sN^{(c)} \big\}
$$
satisfies
$$
\pi^{(c)}_{\alpha,\beta} (x) = \frac{x^{1/c}}{{\alpha} \log x} +O\(\frac{x^{1/c}}{\log^2x}\),
$$
where the implied constant depends only on $\alpha$ and $c$.	
\end{theorem}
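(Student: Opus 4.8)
The plan is to adapt the classical Piatetski--Shapiro argument, treating the Beatty constraint as a harmless extra linear phase. Write $\gamma = 1/c$, so that $\gamma \in (\tfrac{11}{12},1)$, let $\psi(t) = t - \fl{t} - \tfrac12$, and set $\bar\alpha = \alpha^{-1}$. The first step is to record the two detection identities: for integers $n \ge 1$,
\[
\ind{n \in \sN^{(c)}} = \fl{-n^{\gamma}} - \fl{-(n+1)^{\gamma}} = \gamma n^{\gamma-1} + \psi\big(-(n+1)^{\gamma}\big) - \psi\big(-n^{\gamma}\big) + O\big(n^{\gamma-2}\big),
\]
while for all but finitely many $n$ one has $\ind{n \in \cB_{\alpha,\beta}} = \bar\alpha + \psi\big(-\bar\alpha(n+1-\beta)\big) - \psi\big(-\bar\alpha(n-\beta)\big)$. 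Inserting both into $\pi^{(c)}_{\alpha,\beta}(x) = \sum_{p \le x}\ind{p \in \cB_{\alpha,\beta}}\ind{p \in \sN^{(c)}}$ and multiplying out produces a main term $\bar\alpha\gamma\sum_{p\le x}p^{\gamma-1}$, a negligible $O(x^{\gamma-1})$ contribution from the $O(n^{\gamma-2})$ term, and three error sums: $\bar\alpha\sum_{p\le x}\big(\psi_3-\psi_4\big)$ (the bare Piatetski--Shapiro sum), $\gamma\sum_{p\le x}p^{\gamma-1}\big(\psi_1-\psi_2\big)$ (a bare, weighted Beatty sum), and $\sum_{p\le x}\big(\psi_1-\psi_2\big)\big(\psi_3-\psi_4\big)$ (the mixed sum), where $\psi_1,\psi_2$ and $\psi_3,\psi_4$ abbreviate the two Beatty and the two Piatetski--Shapiro sawtooth values at $p$. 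The main term is evaluated by partial summation against the quantitative Beatty prime-number theorem $\#\{p\le t : p\in\cB_{\alpha,\beta}\} = \bar\alpha\,\mathrm{li}(t) + O\big(t/\log^{3}t\big)$, which is available because $\alpha$ is irrational of finite type (it follows from Vinogradov's bilinear method fed with the linear exponential-sum bounds used below, or may be quoted); since $\int_{2}^{x}\gamma t^{\gamma-1}(\log t)^{-1}\dd t = x^{\gamma}/\log x + O\big(x^{\gamma}/\log^{2}x\big)$, this yields exactly $x^{1/c}/(\alpha\log x) + O\big(x^{1/c}/\log^{2}x\big)$. It therefore remains to bound each of the three error sums by $O\big(x^{\gamma}/\log^{2}x\big)$, for which, after passing from the prime indicator to $\Lambda$ (the prime-power terms cost only $O(x^{\gamma/2})$), it suffices to save more than a power of $\log x$ over the trivial bound $O(x/\log x)$.

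The two bare error sums reflect the equidistribution of $p^{\gamma}$, resp. $\bar\alpha p$, modulo one. I would expand each sawtooth by the Vaaler polynomial $\psi(t) = \sum_{1\le|h|\le H}c_{h}\,\e(ht) + O\big(\Delta_{H}(t)\big)$ with $c_{h}\ll|h|^{-1}$ and $\Delta_{H}$ itself having Fourier coefficients $\ll H^{-1}$, where $\e(t)=e^{2\pi it}$. Then $\bar\alpha\sum_{p\le x}(\psi_3-\psi_4)$ reduces to $\sum_{p\le x}\e\big(\pm h(p+j)^{\gamma}\big)$ for $1\le h\le H$, $j\in\{0,1\}$, which is precisely the sum treated by Piatetski--Shapiro in the range $c<\tfrac{12}{11}$ (via a Vaughan decomposition of $\Lambda$ and van der Corput estimates, as below) and which actually saves a small power of $x$; and $\gamma\sum_{p\le x}p^{\gamma-1}(\psi_1-\psi_2)$ reduces, after partial summation, to the linear prime exponential sums $\sum_{p\le y}\e(h\bar\alpha p)$ for $1\le h\le H$, where the finite-type hypothesis guarantees that $\|h\bar\alpha\|^{-1}$ is at most a fixed power of $h$, so Vinogradov's estimate gives a power-of-$\log$ saving uniformly in $h\le H$; in both cases the $\Delta_{H}$ terms unwind into the same exponential sums. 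Taking $H$ a sufficiently large power of $\log x$ makes these two sums $O\big(x^{\gamma}/\log^{3}x\big)$. The new ingredient is the mixed sum: expanding $\psi_1-\psi_2$ and $\psi_3-\psi_4$ simultaneously reduces it (up to acceptable errors) to $\sum_{1\le h\le H}\sum_{1\le|\ell|\le L}c_{h}c_{\ell}\,\e(\cdots)\sum_{n\le x}\Lambda(n)\,\e\big(-h(n+j)^{\gamma}-\ell\bar\alpha n\big)$, with $L$ another power of $\log x$, so that the inner sum now carries one nonlinear and one linear frequency.

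These inner bilinear sums are the heart of the matter. Applying Vaughan's identity splits $\sum_{n\le x}\Lambda(n)\e(f(n))$, with $f(n)=-h(n+j)^{\gamma}-\ell\bar\alpha n$, into Type~I sums $\sum_{d\le D}a_{d}\sum_{m}\e(f(dm))$ and Type~II sums $\sum_{m\sim M}\sum_{k\sim K}a_{m}b_{k}\,\e(f(mk))$. The linear term has vanishing second derivative, so $f''(n)\asymp h\,x^{\gamma-2}$ on the whole range, and the van der Corput / exponent-pair machinery — Weyl differencing together with Processes A and B, exactly as in Piatetski--Shapiro's work and its refinements by Graham--Kolesnik and Rivat--Sargos — applies without change, giving a power-of-$x$ saving over the trivial bound precisely when $\gamma>\tfrac{11}{12}$, i.e. $c<\tfrac{12}{11}$, for each fixed $h\ge1$. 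Summing the resulting estimate against $\sum_{h\le H}\sum_{|\ell|\le L}|c_{h}c_{\ell}|\ll(\log HL)^{2}$ and disposing of the Vaaler error terms (which, once the underlying $\|\cdot\|<\delta$ conditions are detected, again reduce to the same exponential sums) shows the mixed sum is $O\big(x^{\gamma}/\log^{2}x\big)$, completing the proof; Theorem~\ref{thm:main} is obtained the same way, with two linear frequencies $\ell_{1}\bar\alpha_{1}n+\ell_{2}\bar\alpha_{2}n$ in place of one, the linear independence of $1,\alpha_{1}^{-1},\alpha_{2}^{-1}$ over $\QQ$ playing the role that irrationality of $\alpha$ plays here. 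I expect the main obstacle to be organizational rather than analytic: one must choose the truncation heights $H$ and $L$, use the finite-type exponent of $\alpha$, and balance the Type~I/Type~II ranges so that the cumulative losses from the $h$- and $\ell$-summations are absorbed by the exponential-sum savings while remaining inside Piatetski--Shapiro's range $c<\tfrac{12}{11}$ — but no genuinely new analytic input beyond that already needed for Piatetski--Shapiro primes should be required, because the Beatty condition contributes only a benign linear phase.
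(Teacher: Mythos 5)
Your skeleton matches the paper's: detect both sequences by sawtooth identities, expand the sawtooths by Vaaler's approximation, extract the main term from the product of densities, and reduce the mixed error sum to exponential sums $\sum_n\Lambda(n)\,\e\big(hn^\gamma+\ell\omega n+\mathrm{const}\big)$ treated by a combinatorial identity for $\Lambda$ plus van der Corput estimates. But there is a genuine quantitative gap in your choice of truncation heights. You take the height $H$ for the Piatetski--Shapiro sawtooth (and $L$ for the Beatty one) to be powers of $\log x$. For the Piatetski--Shapiro sawtooth this cannot work: Vaaler's error is majorized by a nonnegative trigonometric polynomial whose zero-frequency coefficient is $\asymp H^{-1}$, so the truncation error in your bare PS sum and in your mixed sum is already $\gg \pi(x)/H \asymp x/(H\log x)$, which for $H=(\log x)^{A}$ exceeds the main term $x^{\gamma}/\log x$ by a power of $x$ (recall $\gamma<1$). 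One is forced to take $H\asymp x^{1-\gamma+\eps}$, exactly as the paper does with $H_3=x^{1-\gamma+\eps}$: detecting an event of ``probability'' $\asymp x^{\gamma-1}$ requires $\asymp x^{1-\gamma}$ harmonics. (For the Beatty sawtooth, which only ever appears with the damping weight $p^{\gamma-1}$ or multiplied against the PS sawtooth, a short truncation such as $x^{\eps}$, as in the paper, is indeed enough.)

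The consequence is that your bookkeeping ``summing against $\sum_{h\le H}\sum_{|\ell|\le L}|c_hc_\ell|\ll(\log HL)^2$'' is invalid: the Type I/Type II bounds for $\sum_n\Lambda(n)\,\e(hn^\gamma+\ell\omega n)$ grow with $h$ (like $|h|^{1/6}$ and $|h|^{1/4}$ in the paper's Lemmas on Type I and Type II sums), and after multiplying by $|c_h|\ll h^{-1}$ they must be summed over $h$ up to $x^{1-\gamma+\eps}$. That summation is not a logarithmic loss; it is precisely where the admissible range of $c$ is decided: in the paper one gets $x^{\gamma-1+\eps}\big(H_3^{7/6}x^{\gamma/6+3/4}+H_3^{5/4}x^{\gamma/4+5/8}+H_3x^{22/25}+\cdots\big)\ll x^{11/12+\eps}+\cdots$, which is $\ll x^{\gamma-\eps}$ exactly when $\gamma>\tfrac{11}{12}$, i.e. $c<\tfrac{12}{11}$. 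So the balancing you defer at the end as ``organizational'' is in fact the analytic core, and with your stated $H$ the argument fails at the truncation step before that core is reached. Two minor repairs: the main term $\bar\alpha\gamma\sum_{p\le x}p^{\gamma-1}$ should be evaluated by partial summation against the ordinary prime number theorem, not the Beatty prime number theorem (otherwise the density $\bar\alpha$ is counted twice, since you also keep the weighted Beatty sawtooth sum as a separate error term); and note the paper avoids re-deriving your bare PS sum altogether by leaving $\omega\,\cX^{(c)}(p)$ unexpanded and quoting the Rivat--Sargos asymptotic for $\omega\,\pi^{(c)}(x)$, a shortcut you could also take.
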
	

In the end, we give the corresponding result to Harman's theorem in our case, which proves that there exist infinitely many primes in the intersection of a Piatetski--Shapiro sequence and multiple Beatty sequences.

\begin{theorem}
\label{thm:3}
 Suppose that $\xi$ is a positive integer, and $\alpha_1, \dots, \alpha_{\xi}, \beta_1, \dots, \beta_{\xi} \in\RR$. Let $\alpha_1, \dots, \alpha_{\xi} > 1$ be irrational and of finite type such that
$$
1, \alpha_1^{-1}, \dots, \alpha_{\xi}^{-1} \text{ are linearly independent over } \mathbb{Q}.
$$
For $c \in (1, \frac{12}{11})$, there are infinitely many primes in the intersection of Beatty sequences $\cB_{\alpha_1,\beta_1}, \dots, \cB_{\alpha_{\xi}, \beta_{\xi}}$ and the Piatetski--Shapiro sequence $\sN^{(c)}$.
Moreover, the counting function
$$
\pi^{(c)}_{\alpha_1,\beta_1; \dots; \alpha_{\xi},\beta_{\xi}} (x) \defeq \# \big\{ \text{\rm prime~}p \le x: p \in \cB_{\alpha_1,\beta_1} \cap \cdots \cap \cB_{\alpha_{\xi}, \beta_{\xi}} \cap \sN^{(c)} \big\}
$$
satisfies
$$
\pi^{(c)}_{\alpha_1,\beta_1; \dots; \alpha_{\xi},\beta_{\xi}} (x) = \frac{x^{1/c}}{{\alpha_1 \cdots \alpha_{\xi}} \log x} +O\(\frac{x^{1/c}}{\log^2x}\),
$$
where the implied constant depends only on $\alpha_1, \dots, \alpha_{\xi}$ and $c$.
\end{theorem}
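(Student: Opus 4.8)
To prove Theorem~\ref{thm:3}, the plan is to carry out the proof of Theorem~\ref{thm:main} with $\xi$ Beatty conditions in place of two, in a form that makes the dependence on $\xi$ transparent, and then to isolate the one ingredient that genuinely requires the general hypotheses. Write $\gamma=1/c$, $\gamma_j=\alpha_j^{-1}$ for $1\le j\le\xi$, and let $\psi(t)=t-\fl{t}-\tfrac12$. Starting from
$$
\pi^{(c)}_{\alpha_1,\beta_1;\dots;\alpha_{\xi},\beta_{\xi}}(x)=\sum_{p\le x}\ind{p\in\sN^{(c)}}\prod_{j=1}^{\xi}\ind{p\in\cB_{\alpha_j,\beta_j}},
$$
I would substitute the two elementary identities already used for $\xi=2$: for $m$ large and $\alpha_j$ irrational,
$$
\ind{m\in\cB_{\alpha_j,\beta_j}}=\gamma_j+\psi\br{\gamma_j(m-\beta_j)}-\psi\br{\gamma_j(m-\beta_j+1)}=:\gamma_j+\Delta_j(m),
$$
and, for the Piatetski--Shapiro condition,
$$
\ind{m\in\sN^{(c)}}=\br{(m+1)^{\gamma}-m^{\gamma}}+\psi\br{m^{\gamma}}-\psi\br{(m+1)^{\gamma}}=:\rho(m)+\Psi(m),
$$
where $\rho(m)=\gamma m^{\gamma-1}+O(m^{\gamma-2})$. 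Multiplying out $\br{\rho(p)+\Psi(p)}\prod_{j=1}^{\xi}\br{\gamma_j+\Delta_j(p)}$ expresses the counting function as the diagonal term $\br{\prod_{j=1}^{\xi}\gamma_j}\sum_{p\le x}\rho(p)$ plus $2^{\xi+1}-1$ further sums, each of which carries the oscillatory factor $\Psi$, or at least one factor $\Delta_j$, or both.

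The diagonal term is the main term: by partial summation and the prime number theorem,
$$
\br{\prod_{j=1}^{\xi}\gamma_j}\sum_{p\le x}\br{(p+1)^{\gamma}-p^{\gamma}}=\br{\prod_{j=1}^{\xi}\gamma_j}\br{\gamma\sum_{p\le x}p^{\gamma-1}+O(1)}=\frac{x^{1/c}}{\alpha_1\cdots\alpha_{\xi}\log x}+O\br{\frac{x^{1/c}}{\log^{2}x}},
$$
the error being inherited directly from the prime number theorem. It remains to bound each of the other $2^{\xi+1}-1$ sums by $O(x^{1/c}/\log^{2}x)$; in fact each admits a power saving. To do so I would, exactly as for $\xi=2$, apply Vaaler's approximation $\psi(t)=\sum_{0<|h|\le H}c_h\e(ht)+O\br{\sum_{|h|\le H}b_h\e(ht)}$ with $c_h\ll|h|^{-1}$ and $b_h\ll H^{-1}$ to every occurrence of $\psi$, take $H$ a small fixed power of $x$, and pass to sums weighted by the von~Mangoldt function (the prime powers being negligible since $c<2$). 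Each error term then becomes a bounded linear combination — over nonzero integers $h_j$ for $j$ ranging in some $\varnothing\ne S\subseteq\{1,\dots,\xi\}$ with $0<|h_j|\le H$, and an integer $k$ with $|k|\le H$ (here $k=0$ exactly when the Piatetski--Shapiro contribution is $\rho$ rather than $\Psi$) — of exponential sums over primes of the shape
$$
\sum_{n\le x}\Lambda(n)\,\e\br{\Big(\sum_{j\in S}h_j\gamma_j\Big)n+k\,n^{\gamma}}
$$
(and the same with $n^{\gamma}$ replaced by $(n+1)^{\gamma}$), possibly carrying a smooth weight of size $n^{\gamma-1}$ when $k=0$.

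These split into two familiar families. When $k=0$ the phase is purely linear, $\e(\theta_S n)$ with $\theta_S:=\sum_{j\in S}h_j\gamma_j$, and one invokes Vinogradov's estimate for $\sum_{n\le x}\Lambda(n)\e(\theta_S n)$. When $k\ne0$ one applies Vaughan's identity to reduce to Type~I and Type~II bilinear sums and estimates these by van der Corput's method (exponent pairs) applied to the Piatetski--Shapiro phase $kn^{\gamma}$; the extra linear phase $\theta_S n$ is harmless for the second and higher derivatives that drive those estimates, and in the nearly stationary range its effect is absorbed using the finite-type control on $\theta_S$. The hypothesis $c\in(1,\tfrac{12}{11})$ is precisely what makes the Type~I and Type~II estimates close, and it is the same restriction as in the one-Beatty-sequence case of Theorem~\ref{thm:main} (equivalently, as in the classical theory of Piatetski--Shapiro primes): the number $\xi$ of Beatty conditions enters only through the combinatorics of the expansion, never through the van der Corput analysis. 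Since the exponential sums above save a power of $x$, the $(\log x)^{O(1)}$ loss coming from Vaughan's identity and from the ranges of the $h_j$ and $k$ is immaterial, and every error term stays below $x^{1/c}/\log^{2}x$.

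The one genuinely new input, and where I expect the real work to lie, is the Diophantine control of the frequencies $\theta_S=\sum_{j\in S}h_j\gamma_j$. Both the Vinogradov estimate (case $k=0$) and the handling of the Type~I sums (case $k\ne0$) require $\theta_S$ to stay quantitatively away from the integers, uniformly over all $\varnothing\ne S\subseteq\{1,\dots,\xi\}$ and all admissible $h_j$, in a form such as $\|q\theta_S\|\gg(q\max_j|h_j|)^{-\kappa}$ with $\kappa=\kappa(\alpha_1,\dots,\alpha_{\xi})$ for the relevant range of $q$. Linear independence of $1,\alpha_1^{-1},\dots,\alpha_{\xi}^{-1}$ over $\QQ$ guarantees $\theta_S\notin\ZZ$ whenever the $h_j$ are not all zero, and the finite-type hypothesis on the $\alpha_j$ upgrades this to the required effective lower bound; for $\xi=2$ this is exactly the Diophantine lemma used in the proof of Theorem~\ref{thm:main}, and the general case follows by the same reasoning, with the $\xi$-dimensional simultaneous-approximation estimate replacing the planar one. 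Once this uniform lower bound is in hand, it feeds into all $2^{\xi+1}-1$ error estimates, each of which — being built from a Vinogradov piece and/or a Piatetski--Shapiro van der Corput piece already handled for $\xi=2$ — then goes through with only notational changes, completing the sketch.
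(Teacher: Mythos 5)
Your proposal is correct and follows essentially the same route as the paper: expand the Beatty and Piatetski--Shapiro indicators via $\psi$, apply Vaaler's approximation, extract the main term, bound the purely linear-phase sums by the Vinogradov-type estimate together with the finite-type property of $\sum_{j\in S}h_j\alpha_j^{-1}$ (the paper's Lemmas~\ref{lem:ttr} and \ref{lem:finite-type}), and bound the sums carrying the phase $kn^{\gamma}$ by a combinatorial identity reducing to Type~I/II sums treated with van der Corput derivative tests, which is exactly where $c<\tfrac{12}{11}$ enters. The only deviations --- Vaughan's identity instead of Heath--Brown's, and obtaining the main term from the prime number theorem rather than quoting the known $\pi^{(c)}(x)$ asymptotic --- are immaterial.
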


\section{Preliminaries}

\subsection{Notation}

We denote by $\fl{t}$ and $\{t\}$ the integral part and the fractional part of $t$, respectively.
As is customary, we put
$$
\e(t)\defeq e^{2\pi it}\mand
\{ t \} \defeq t-\fl{t}.
$$
Throughout the paper, we make considerable use of the sawtooth function defined by
$$
\psi(t) \defeq t-\fl{t}-\frac{1}{2} = \{t\}-\frac{1}{2}.
$$
The notation $\| t \|$ is used to denote the distance
from the real number $t$ to the nearest integer; that is,
$$
\| t\| \defeq\min_{n\in\ZZ}|t-n|.
$$

Let $\PP$ denote the set of primes in $\NN$. The letter $p$ always denotes a prime. For a Beatty sequence $(\fl{\alpha n + \beta})_{n=1}^\infty$, we denote $\omega \defeq \alpha^{-1}$. We represent $\gamma \defeq c^{-1}$ for the Piatetski--Shapiro sequence $(\fl{n^c})_{n=1}^\infty$. We use notation of the form $m\sim M$ as an abbreviation for $M< m\le 2M$.

Throughout the paper, $\varepsilon$ always denotes an arbitrarily small positive constant, which may not be the
same at different occurrences; the implied constants in symbols $O$, $\ll$ and $\gg$ may depend (where obvious) on the parameters $\alpha_1, \dots, \alpha_{\xi}, \beta_1, \dots, \beta_{\xi}, c, \eps$ but are absolute otherwise. For given
functions $F$ and $G$, the notations $F\ll G$, $G\gg F$ and $F=O(G)$ are all equivalent to the statement that the inequality
$|F|\le \mathcal{C}|G|$ holds with some constant $\mathcal{C}>0$.

\subsection{Type of an irrational number}

For any irrational number $\alpha$, we define its type $\tau=\tau(\alpha)$ by the following definition
$$
\tau\defeq\sup\Big\{t\in\RR:\liminf\limits_{n\to\infty}n^t\| \alpha n \|=0\Big\}.
$$
Using Dirichlet's approximation theorem, one can see that $\tau\ge 1$ for every irrational number $\alpha$. Thanks to
the work of Khinchin \cite{Khin} and Roth \cite{Roth1,Roth2}, it is known that $\tau=1$ for almost all real numbers,
in the sense of the Lebesgue measure, and for all irrational algebraic numbers, respectively. Moreover, if $\alpha$ is an irrational number of type $\tau<\infty$, then so are $\alpha^{-1}$ and $n\alpha^{-1}$ for all integer $n\ge 1$.

\subsection{Technical lemmas}

We need the following well--known approximation of Vaaler \cite{Vaal}.

\begin{lemma}
\label{lem:Vaaler}
For any $H\ge 1$, there exist numbers $a_h,b_h$ such that
$$
\bigg|\psi(t)-\sum_{0<|h|\le H}a_h\,\e(th)\bigg|
\le\sum_{|h|\le H}b_h\,\e(th),\qquad
a_h\ll\frac{1}{|h|}\,,\qquad b_h\ll\frac{1}{H}\,.
$$
\end{lemma}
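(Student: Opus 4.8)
I would deduce the lemma from the classical Beurling--Selberg construction of trigonometric polynomials that sandwich $\psi$, via a soft Fourier-analytic reduction. The first move is exactly that reduction: it suffices to exhibit, for each integer $H\ge1$, real trigonometric polynomials $\psi_H^-,\psi_H^+$ of degree $\le H$ with
$$
\psi_H^-(t)\le\psi(t)\le\psi_H^+(t)\quad(t\in\RR)
\qquad\text{and}\qquad
\int_0^1\bigl(\psi_H^+(t)-\psi_H^-(t)\bigr)\dd t\ll\frac1H .
$$
Writing $D_H\defeq\psi_H^+-\psi_H^-\ge0$ and $a_h\defeq\tfrac12\bigl(\widehat{\psi_H^+}(h)+\widehat{\psi_H^-}(h)\bigr)$ for $0<|h|\le H$, the polynomial $\psi^\ast(t)\defeq\sum_{0<|h|\le H}a_h\e(th)$ is $\tfrac12(\psi_H^++\psi_H^-)$ with its mean $m\defeq\tfrac12\bigl(\widehat{\psi_H^+}(0)+\widehat{\psi_H^-}(0)\bigr)$ subtracted off; since $\psi_H^-\le\psi\le\psi_H^+$, $\int_0^1\psi=0$ and $\int_0^1 D_H\ll 1/H$, one checks in a line that $|m|\ll1/H$. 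Because $\psi(t)$ lies within $\tfrac12 D_H(t)$ of the midpoint $\tfrac12(\psi_H^++\psi_H^-)(t)$, this gives $|\psi(t)-\psi^\ast(t)|\le\tfrac12 D_H(t)+|m|$, so setting $b_h\defeq\tfrac12\widehat{D_H}(h)$ for $0<|h|\le H$ and $b_0\defeq\tfrac12\widehat{D_H}(0)+|m|$ makes $\sum_{|h|\le H}b_h\e(th)=\tfrac12 D_H(t)+|m|\ge|\psi(t)-\psi^\ast(t)|$, which is the desired inequality. The coefficient bounds then drop out: $b_0=\tfrac12\int_0^1 D_H+|m|\ll1/H$; for $h\ne0$, $|b_h|=\tfrac12|\widehat{D_H}(h)|\le\tfrac12\widehat{D_H}(0)\ll1/H$ since $D_H\ge0$; and, using $\widehat\psi(h)=-(2\pi ih)^{-1}$ together with $\|\psi^\ast-\psi\|_1\le\tfrac12\|D_H\|_1+|m|\ll1/H$, one gets $|a_h|\le\|\psi^\ast-\psi\|_1+(2\pi|h|)^{-1}\ll|h|^{-1}$ for $1\le|h|\le H$.

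\textbf{Producing the brackets.} For the polynomials $\psi_H^\pm$ I would appeal to the Beurling--Selberg extremal functions, just as in Vaaler~\cite{Vaal}. Beurling's entire function $B$ of exponential type $2\pi$ majorises $\operatorname{sgn}$ on $\RR$ with $\int_\RR(B-\operatorname{sgn})=1$ and $\widehat{B-\operatorname{sgn}}$ supported in $[-1,1]$; Selberg's symmetrisation then yields, for any interval, an entire majorant and minorant of its indicator of exponential type $2\pi H$ with $L^1$-error $1/H$, and periodising over $\ZZ$ turns these into trigonometric polynomials of degree $\le H$ sandwiching the periodic indicator $t\mapsto\ind{\{t\}<s}$ with $L^1$-gap $\le1/H$, uniformly in $s\in(0,1)$. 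I would then use the identity
$$
\psi(t)=\tfrac12-\int_0^1\ind{\{t\}<s}\,\dd s
$$
and define $\psi_H^+\defeq\tfrac12-\int_0^1(\text{minorant of }\ind{\{\cdot\}<s})\,\dd s$ and $\psi_H^-\defeq\tfrac12-\int_0^1(\text{majorant of }\ind{\{\cdot\}<s})\,\dd s$. Integrating in $s$ preserves degree $\le H$ (it acts on each Fourier coefficient separately), forces $\psi_H^-\le\psi\le\psi_H^+$, and yields $\int_0^1(\psi_H^+-\psi_H^-)\ll1/H$ by Fubini; so $\psi_H^\pm$ satisfy everything demanded in the first step.

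\textbf{Expected main obstacle.} The single genuinely substantial ingredient is classical rather than arithmetic: establishing the interpolatory and extremal properties of Beurling's function and verifying that periodising an $L^1(\RR)$ function of exponential type $2\pi H$ produces a trigonometric polynomial of degree $\le H$ --- a Poisson-summation / Paley--Wiener argument requiring the usual care near the frequencies $\pm H$. I would import this wholesale from Vaaler~\cite{Vaal} (or, equivalently, simply quote his explicit $\psi_H^\pm$); once it is in hand, the soft reduction above delivers the precise shapes $a_h\ll|h|^{-1}$, $b_h\ll H^{-1}$ and the one-sided form of the inequality with essentially no extra work.
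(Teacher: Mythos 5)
Your proposal is correct, and it should be said up front that the paper itself offers no argument here: Lemma~\ref{lem:Vaaler} is simply quoted from Vaaler \cite{Vaal}, so any complete derivation goes beyond what the paper records. Your route is the standard one underlying that theorem. Granting one-sided trigonometric approximations $\psi_H^{\pm}$ of degree $\le H$ with $\psi_H^-\le\psi\le\psi_H^+$ and $L^1$-gap $O(1/H)$ (the Beurling--Selberg construction, exactly the content of \cite{Vaal}), your midpoint reduction is sound and the coefficient bounds all check: $|m|\ll 1/H$ because $\widehat{\psi_H^-}(0)\le 0\le\widehat{\psi_H^+}(0)$ and their difference is $\|D_H\|_1\ll 1/H$; $|b_h|\le\tfrac12\widehat{D_H}(0)+|m|\ll 1/H$ by positivity of $D_H$; and $|a_h|\le\|\psi^\ast-\psi\|_1+(2\pi|h|)^{-1}\ll|h|^{-1}$ since $|h|\le H$, using $\widehat{\psi}(h)=-(2\pi ih)^{-1}$. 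The identity $\psi(t)=\tfrac12-\int_0^1\ind{\{t\}<s}\,\dd s$ and the Fubini step are fine (the Selberg majorant/minorant of $\ind{\{\cdot\}<s}$ depends explicitly, hence measurably, on $s$, and averaging in $s$ acts coefficientwise, preserving degree $\le H$). The genuinely substantial ingredients --- Beurling's extremal function, Selberg's majorants/minorants for intervals, and the periodization/Paley--Wiener step giving degree $\le H$ --- are imported wholesale, which is precisely what the paper does by citing Vaaler; note also that Vaaler's theorem as usually stated already gives the explicit choice $a_h=(2\pi ih)^{-1}\widehat{J}\big(h/(H+1)\big)$ with a Fej\'er-kernel majorant, which your midpoint argument reconstructs up to constants, so nothing is lost for the application in the paper.
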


\begin{lemma}
\label{lem:index}
For an arithmetic function $g$ and $N'\sim N$, we have
$$
\sum_{N < p\le N'} g(p) \ll \frac{1}{\log N} \max_{N<N_1 \le 2N}
\bigg|\sum_{N < n \le N_1} \Lambda(n)g(n)\bigg| + N^{1/2}.
$$
\end{lemma}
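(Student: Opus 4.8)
The plan is to treat this lemma as a routine passage from a sum weighted by the von Mangoldt function $\Lambda$ to a sum over primes, carried out by a single partial summation together with the trivial bound on proper prime powers. Throughout I use that $g$ is $1$-bounded, i.e. $|g(n)|\le 1$, as holds in every application of the lemma in this paper; this is what makes the prime-power correction term genuinely of size $O(N^{1/2})$.

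First I would rewrite the left-hand side in terms of $\Lambda$. Since $\Lambda(n)/\log n$ equals $1/k$ when $n=p^k$ and $0$ otherwise,
$$
\sum_{N<p\le N'}g(p)=\sum_{N<n\le N'}\frac{\Lambda(n)g(n)}{\log n}-\sum_{\substack{N<p^k\le N'\\ k\ge 2}}\frac{g(p^k)}{k}.
$$
The last sum ranges over proper prime powers in $(N,N']$, of which there are $\ll N^{1/2}$ (the count is dominated by squares of primes not exceeding $(2N)^{1/2}$, using $N'\le 2N$); as each term is $\ll 1$, this sum contributes $\ll N^{1/2}$.

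Next I would strip the weight $1/\log n$ from the main term by Abel summation. Writing $T(t)\defeq\sum_{N<n\le t}\Lambda(n)g(n)$, we have
$$
\sum_{N<n\le N'}\frac{\Lambda(n)g(n)}{\log n}=\frac{T(N')}{\log N'}+\int_N^{N'}\frac{T(t)}{t\log^2 t}\dd t.
$$
For every $t\in(N,N']$ one has $N<t\le 2N$ (again because $N'\sim N$), hence $|T(t)|\le\cM$, where $\cM\defeq\max_{N<N_1\le 2N}\big|\sum_{N<n\le N_1}\Lambda(n)g(n)\big|$. Combining this with the elementary bound $\int_N^{N'}\dd t/(t\log^2 t)\le 1/\log N$ and $\log N'\gg\log N$ shows the main term is $\ll\cM/\log N$. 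Adding the two estimates gives $\sum_{N<p\le N'}g(p)\ll\cM/\log N+N^{1/2}$, which is the claimed bound.

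There is no serious obstacle here; this is a standard device for turning prime sums into $\Lambda$-sums. The only points worth a word of care are the $1$-boundedness of $g$ (without which the proper-prime-power correction need not be $O(N^{1/2})$) and the observation that the maximum defining $\cM$ runs over the full range $N_1\in(N,2N]$, which is exactly what is needed to control $T(t)$ for every $t$ occurring in the Abel integral, given that $N'\le 2N$. Everything else is routine.
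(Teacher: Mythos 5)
Your argument is correct and is essentially the standard one the paper appeals to by citing page 48 of Graham--Kolesnik: pass to the $\Lambda$-weighted sum, absorb the proper prime powers in $(N,2N]$ into the $O(N^{1/2})$ term, and remove the $1/\log n$ weight by Abel summation against the maximal partial sum. Your remark that boundedness of $g$ on prime powers is genuinely needed for the $N^{1/2}$ term (and holds in all of the paper's applications) is a fair and correct refinement of the statement as written.
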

\begin{proof}
See the argument on page 48 of \cite{GraKol}.
\end{proof}

\begin{lemma}\label{lem:amn}
Suppose that
\begin{equation*}
\alpha=\frac{a}{q}+\frac{\theta}{q^2},
\end{equation*}
with $(a,q)=1, q\geqslant1, |\theta|\leqslant 1$. Then there holds
\begin{equation*}
\sum_{m\leqslant N}\Lambda(m)\mathbf{e}(m\alpha)\ll\big(Nq^{-1/2}+N^{4/5}+N^{1/2}q^{1/2}\big)(\log N)^4.
\end{equation*}
\end{lemma}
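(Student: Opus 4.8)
The plan is to establish this as the classical Vinogradov-type bound for a linear exponential sum twisted by $\Lambda$, via the standard Vaughan-identity dissection. Fix parameters $u,v\ge 1$ to be optimised at the end, and apply Vaughan's identity: up to the trivial contribution $O(uv)$ from the terms $m\le uv$, one rewrites $\sum_{m\le N}\Lambda(m)\e(m\alpha)$ as a bounded number of sums of two shapes. The Type~I sums have the form $\sum_{d\le D}c_d\sum_{k\le N/d}\e(dk\alpha)$ with $D\le uv$ and $|c_d|\ll\log N$ (these arise from the $\mu\ast\log$ and $\mu\ast\Lambda$ pieces, after bounding the divisor-type inner coefficients crudely), while the Type~II sums have the form $\sum_{h\sim H}\sum_{k\sim K}a_hb_k\e(hk\alpha)$ with $HK\asymp N$, $u\ll H$, $v\ll K$, $|a_h|\ll\log N$ and $|b_k|\ll\tau(k)$.

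For the Type~I sums I would bound the inner geometric progression by $\min\bigl(N/d,\tfrac12\|d\alpha\|^{-1}\bigr)$ and invoke the classical consequence of the rational approximation $\alpha=a/q+\theta q^{-2}$, namely
$$
\sum_{d\le D}\min\Bigl(\frac{N}{d},\frac{1}{\|d\alpha\|}\Bigr)\ll\Bigl(\frac{N}{q}+D+q\Bigr)\log(2qN),
$$
which is proved by splitting $[1,D]$ into blocks of length $q$ and using $(a,q)=1$ to see that $da$ runs over a complete residue system modulo $q$ on each block, the perturbation $d\theta q^{-2}$ being harmless. Combined with $|c_d|\ll\log N$ and $D\le uv$, this yields a Type~I contribution of $\ll\bigl(N/q+uv+q\bigr)(\log N)^2$.

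The Type~II sums are the heart of the matter. Here I would complete the sum over $h$ to a full dyadic interval and apply Cauchy--Schwarz in $h$, obtaining
$$
\Bigl|\sum_{h\sim H}\sum_{k\sim K}a_hb_k\e(hk\alpha)\Bigr|^2\ll\Bigl(\sum_{h\sim H}|a_h|^2\Bigr)\sum_{h\sim H}\Bigl|\sum_{k\sim K}b_k\e(hk\alpha)\Bigr|^2.
$$
Expanding the inner square and summing the geometric progression in $h$ reduces the second factor to $\sum_{|\ell|\le K}(\#\{k_1-k_2=\ell\})\min\bigl(H,\|\ell\alpha\|^{-1}\bigr)$; bounding the count by $K$ and reusing the divisor-sum estimate above gives $|\mathrm{II}|^2\ll N^{1+\eps}\bigl(H+N/q+K+q\bigr)$, hence $|\mathrm{II}|\ll N^{1/2+\eps}\bigl(H^{1/2}+N^{1/2}q^{-1/2}+K^{1/2}+q^{1/2}\bigr)$. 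Since $u\ll H$ and $v\ll K$ force $H\le N/v$ and $K\le N/u$, the first and third terms are $\ll N\bigl(u^{-1/2}+v^{-1/2}\bigr)$.

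It remains to balance the parameters and collect terms. Choosing $u=v=N^{2/5}$ makes the trivial term $uv$, the Type~I term $uv$, and the Type~II term $N(u^{-1/2}+v^{-1/2})$ all equal to $N^{4/5}$; the remaining Type~II terms contribute $Nq^{-1/2}+N^{1/2}q^{1/2}$, and the Type~I term $N/q\le Nq^{-1/2}$ is absorbed. Since for $q>N$ the sum is trivially $\le N\le N^{1/2}q^{1/2}$, we may assume $q\le N$, whence $q\le N^{1/2}q^{1/2}$ and the remaining $q$-terms are absorbed as well; this gives the bound $Nq^{-1/2}+N^{4/5}+N^{1/2}q^{1/2}$ up to a power of $\log N$. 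Keeping track of the logarithmic losses — $\sum_{k\sim K}\tau(k)^2\ll K(\log K)^3$, $\sum_h|a_h|^2\ll H(\log N)^2$, the logs inside the min-sums, the $O(\log N)$ dyadic blocks, and the square-root from Cauchy--Schwarz — a careful accounting shows these are subsumed in $(\log N)^4$. I expect the only genuine obstacle to be the Type~II estimate: making the Cauchy--Schwarz step and the off-diagonal divisor sum interact so that, after optimising $u$ and $v$, one lands exactly on the exponent $4/5$, while keeping the $\theta q^{-2}$-perturbation of the rational approximation under control at every stage.
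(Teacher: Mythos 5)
Your argument is correct and is essentially the classical Vaughan-identity proof that the paper invokes by simply citing Chapter 25 of Davenport: the same Type~I/Type~II dissection, the same $\sum_{d}\min(N/d,\|d\alpha\|^{-1})$ lemma, the same choice $u=v=N^{2/5}$, and the same $(\log N)^4$ bookkeeping. The only cosmetic slip is in the Type~II step, where you momentarily drop the coefficients $b_k$ (``bounding the count by $K$'') before reinstating $\sum_{k\sim K}\tau(k)^2$; the standard smoothing $|b_{k_1}b_{k_2}|\le\tfrac12\big(|b_{k_1}|^2+|b_{k_2}|^2\big)$ repairs this and changes nothing in the final bound.
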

\begin{proof}
See Chapter 25 of Davenport \cite{Daven}.
\end{proof}

\begin{lemma}\label{lem:ttr}
Suppose that $a$ is a fixed irrational number of finite type $\tau < \infty$ and $h \geqslant1,m$ are integers. Then we have
\begin{equation*}
\sum_{m\leqslant M}\Lambda(m)\mathbf{e}(ahm) \ll h^{1/2} M^{1-1/(2\tau) + \varepsilon}+M^{1-\varepsilon}.
\end{equation*}
\end{lemma}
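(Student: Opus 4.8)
The plan is to combine Dirichlet's approximation theorem with the finite-type hypothesis on $a$ and Vaughan's estimate, Lemma~\ref{lem:amn}. We may assume $M$ is large, and we fix once and for all a small $\delta>0$, to be pinned down in terms of $\varepsilon$ at the very end; since $a$ is of finite type $\tau$ we have $\|an\|\gg_\delta n^{-\tau-\delta}$ for every integer $n\ge1$, and hence
$$
\|a(hq)\|\ \gg_\delta\ h^{-\tau-\delta}\,q^{-\tau-\delta}\qquad(h,q\ge1).
$$
I would first dispose of large $h$: if $h\ge M^{1/(\tau+\delta)}$ then, for $\delta$ small, $M^{1/(\tau+\delta)}\ge M^{1/\tau-2\varepsilon}$, so $h\ge M^{1/\tau-2\varepsilon}$, and the trivial bound $\big|\sum_{m\le M}\Lambda(m)\e(ahm)\big|\le\sum_{m\le M}\Lambda(m)\ll M\ll h^{1/2}M^{1-1/(2\tau)+\varepsilon}$ already suffices. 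So from now on $h<M^{1/(\tau+\delta)}$.

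Next I would apply Dirichlet's theorem to the real number $ah$ with a parameter $Q$ (with $1\le Q\le M$, to be optimised): there exist coprime integers $a',q$ with $1\le q\le Q$ and $|ah-a'/q|\le 1/(qQ)$. Setting $\theta\defeq q^2(ah-a'/q)$ we get $|\theta|\le q/Q\le1$, so $ah=a'/q+\theta/q^2$ is precisely of the shape demanded by Lemma~\ref{lem:amn}. Moreover $\|a(hq)\|=\|q\cdot ah\|\le|q\cdot ah-a'|\le1/Q$, and comparing this with the finite-type lower bound above yields $q\gg_\delta Q^{1/(\tau+\delta)}h^{-1}$. Substituting the representation of $ah$ into Lemma~\ref{lem:amn} with $N=M$ and using $Q^{1/(\tau+\delta)}h^{-1}\ll q\le Q$,
$$
\sum_{m\le M}\Lambda(m)\e(ahm)\ \ll\ \Big(Mh^{1/2}Q^{-1/(2(\tau+\delta))}+M^{4/5}+M^{1/2}Q^{1/2}\Big)(\log M)^4 .
$$

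Now I would choose $Q$ so as to balance the first and third terms, namely $Q=(Mh)^{(\tau+\delta)/(\tau+\delta+1)}$; the assumption $h<M^{1/(\tau+\delta)}$ is exactly what makes this legitimate, since then $1\le Q\le M$ and $Q^{1/(\tau+\delta)}h^{-1}\ge1$. With this choice the first and third terms both equal $M^{\,1-1/(2(\tau+\delta+1))}h^{\,(\tau+\delta)/(2(\tau+\delta+1))}$, while $M^{4/5}(\log M)^4\ll M^{1-\varepsilon}$, so it only remains to verify
$$
M^{\,1-1/(2(\tau+\delta+1))}\,h^{\,(\tau+\delta)/(2(\tau+\delta+1))}(\log M)^4\ \ll\ h^{1/2}M^{1-1/(2\tau)+\varepsilon}+M^{1-\varepsilon}.
$$
Since the exponent of $h$ on the left is strictly below $1/2$, one compares the left side with the first term on the right when $h$ exceeds a suitable threshold $M^{\eta}$ and with $M^{1-\varepsilon}$ when $h\le M^{\eta}$; in each range the discrepancy between the exponents of $M$ is $O(\delta)$, which together with the $(\log M)^4$ is absorbed into the $M^{\varepsilon}$ (respectively $M^{-\varepsilon}$) provided $\delta$ is chosen small enough in terms of $\varepsilon$.

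The main obstacle is precisely this last bookkeeping: because $\|an\|$ is only guaranteed to beat $n^{-\tau-\delta}$ rather than $n^{-\tau}$, each use of the type estimate introduces a loss that must be tracked through the choice of $Q$, and one has to check that all such losses — together with the secondary term $M^{4/5}$ and the logarithmic factors inherited from Lemma~\ref{lem:amn} — can genuinely be swallowed by the $\varepsilon$'s appearing in the statement, uniformly over the whole range of $h$ and in particular near the crossover $h\asymp M^{\eta}$. Once the parameters are arranged correctly, the remainder is a routine combination of Dirichlet's theorem and Vaughan's bound.
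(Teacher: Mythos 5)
Your argument is essentially the paper's own: approximate $ah$ by a rational with controlled denominator (you via Dirichlet's theorem with the balanced, $h$-dependent parameter $Q=(Mh)^{(\tau+\delta)/(\tau+\delta+1)}$, the paper via the convergent of largest denominator $d\le M^{1-\eta}$ with a fixed small $\eta$), use the finite-type hypothesis to bound that denominator from below, and insert the result into Lemma~\ref{lem:amn}; your preliminary trivial disposal of $h\ge M^{1/(\tau+\delta)}$ and your optimized choice of $Q$ are harmless refinements that give a slightly cleaner intermediate bound. The one caveat is your closing claim that the $O(\delta)$ and $(\log M)^4$ losses can always be absorbed: at the crossover $h\asymp M^{1/\tau-4\varepsilon}$ the available room in the exponent is only about $\varepsilon(\tau-1)/(\tau+1)$, so your bookkeeping closes for every $\tau>1$ (taking $\delta$ small in terms of $\varepsilon$ and $\tau$) but not at $\tau=1$; this defect is shared by (indeed is worse in) the paper's own proof, whose constraints amount to $2\varepsilon\le\eta\le\varepsilon(2\tau+2\varepsilon-1/\tau)$, and it is immaterial for the application, where only $h\le x^{\varepsilon}$ is ever used.
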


\begin{proof}
For any sufficiently small $\varepsilon>0$, we set $\varrho=\tau+\varepsilon$. Since $a$ is of type $\tau$, there exists some constant $\mathfrak{c}>0$ such that
\begin{equation}\label{eq:dis}
\|an\|>\mathfrak{c}n^{-\varrho},\qquad n\geqslant 1.
\end{equation}
For given $h$ with $0<h\leqslant H$, let $b/d$ be the convergent in the continued fraction expansion of $ah$, which has the largest denominator $d$ not exceeding $M^{1-\eta}$ for a sufficiently small positive number $\eta$. Then we derive that
\begin{equation}\label{eq:apr}
\bigg|ah-\frac{b}{d}\bigg|\leqslant\frac{1}{dM^{1-\eta}}\leqslant\frac{1}{d^2},
\end{equation}
which combined with \eqref{eq:dis} yields
$$
M^{-1+\eta}\geqslant|ahd-b|\geqslant\|ahd\|>\mathfrak{c}(hd)^{-\varrho}.
$$
Taking $C_0:=\mathfrak{c}^{1/\varrho}$, we obtain
\begin{equation}\label{eq:d}
d > C_0h^{-1}M^{1/\varrho - \eta/\varrho}.
\end{equation}
Combining~\eqref{eq:apr} and~\eqref{eq:d}, applying Lemma~\ref{lem:amn} and the fact that $d \le M^{1-\eta}$, we deduce that
\begin{align*}
           \sum_{m \leqslant M}\Lambda(m)\mathbf{e}(ahm)
 \ll &\,\, \big( M d^{-1/2} + M^{4/5} + M^{1/2} d^{1/2} \big) (\log M)^4 \\
 \ll &\,\, \big( h^{1/2} M^{1-1/(2\varrho)+\eta/(2\varrho)}+ M^{4/5} +  M^{1-\eta/2} \big) (\log M)^4 \\
 \ll &\,\, h^{1/2} M^{1-1/(2\tau) + \varepsilon}+ M^{1-\varepsilon}.
\end{align*}
This completes the proof of Lemma \ref{lem:ttr}.
\end{proof}

\begin{lemma}\label{lem:finite-type}
Suppose that $\xi$ is an integer, and $\omega_1,\omega_2,\dots,\omega_\xi$ are irrational numbers of finite type such that
\begin{equation*}
   1,\omega_1,\omega_2,\dots,\omega_\xi
\end{equation*}
are linearly independent over $\mathbb{Q}$. Then for any subset  $\{i_1,i_2,\dots,i_s\}\subset\{1,2,\dots,\xi\}$, the
combination of $h_{i_1}\omega_{i_1}+\cdots+h_{i_s}\omega_{i_s}$ with $h_{i_1},\dots,h_{i_s}\in\mathbb{N}^\times$ is of finite type.
\end{lemma}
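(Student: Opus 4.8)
The plan is to reduce to the case of two summands and then settle that case by a simultaneous Diophantine approximation argument. First I would induct on $s$. The base case $s=1$ is the classical fact that scaling by a positive integer preserves finite type: if $\omega$ has finite type $\tau(\omega)$ and $h\in\NN^\times$, then $\|h\omega\,m\|=\|\omega(hm)\|\gg_\varepsilon(hm)^{-\tau(\omega)-\varepsilon}\gg m^{-\tau(\omega)-\varepsilon}$, so $h\omega$ has type $\le\tau(\omega)$. For the inductive step I would set $\omega'\defeq h_{i_1}\omega_{i_1}+\cdots+h_{i_{s-1}}\omega_{i_{s-1}}$, which is of finite type by the induction hypothesis, and note that $1,\omega',\omega_{i_s}$ remain linearly independent over $\QQ$, since a nontrivial relation among them would expand to a nontrivial $\QQ$-relation among $1,\omega_{i_1},\dots,\omega_{i_s}$. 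Writing $\beta=\omega'$ and $\gamma=h_{i_s}\omega_{i_s}$, everything then reduces to the following assertion: \emph{if $\beta,\gamma$ are irrational of finite type and $1,\beta,\gamma$ are linearly independent over $\QQ$, then $\beta+\gamma$ is of finite type.}

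For this, fix $n\ge1$ and put $\eta\defeq\|(\beta+\gamma)n\|$, which is positive because $\beta+\gamma\notin\QQ$; write $(\beta+\gamma)n=m+\vartheta$ with $m\in\ZZ$ and $|\vartheta|=\eta<\tfrac12$. The goal is a lower bound $\eta\gg_\varepsilon n^{-\tau'-\varepsilon}$ with $\tau'$ depending only on $\tau(\beta),\tau(\gamma)$. I would take $N$ slightly larger than $16n^2$ and, by Dirichlet's theorem on simultaneous approximation, an integer $q$ with $1\le q\le N$ and $\|q\beta\|,\|q\gamma\|\le N^{-1/2}<1/(4n)$; write $q\beta=A+\beta_1$, $q\gamma=B+\gamma_1$ with $A,B\in\ZZ$ and $|\beta_1|,|\gamma_1|<1/(4n)$. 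Multiplying $(\beta+\gamma)n=m+\vartheta$ by $q$ gives
$$
(A+B)n-qm=q\vartheta-(\beta_1+\gamma_1)n,
$$
whose left-hand side is an integer of absolute value $<N\eta+\tfrac12$. Hence either $N\eta\ge\tfrac12$, which already yields $\eta\gg n^{-2}$, or we land in the degenerate situation $(A+B)n=qm$, i.e.\ $m/n=(A+B)/q$. In the degenerate case $A/q$ and $B/q$ are individually good approximations to $\beta$ and $\gamma$ with the common denominator $q$, and by finite type $q$ cannot be too small, namely $q\gg_\varepsilon N^{1/(2\tau(\beta)+\varepsilon)}$; one then has to feed this back --- using the linear independence of $1,\beta,\gamma$ over $\QQ$ in a quantitative form and passing to further auxiliary moduli --- in order to bound $\eta$ from below. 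Combining the two cases would give the assertion, and hence the lemma.

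The hard part will be exactly this degenerate case $(A+B)n=qm$. There the single Diophantine inequality $\|(\beta+\gamma)n\|<\eta$ for the sum carries, a priori, no information about $\beta$ and $\gamma$ separately --- indeed the fractional parts $\{\beta n\}$ and $\{\gamma n\}$ may sit symmetrically about $\tfrac12$, so that no bounded multiple brings either near an integer --- and breaking the deadlock genuinely requires the hypothesis that $1,\beta,\gamma$ are $\QQ$-linearly independent, in the quantitative shape $|a_0+a_1\beta+a_2\gamma|\gg(\max_i|a_i|)^{-C}$ for all nonzero integer vectors $(a_0,a_1,a_2)$. For algebraic $\beta,\gamma$ such a bound is furnished by the Subspace Theorem; in the present finite-type setting it has to be manufactured by a more delicate argument, iterating the construction above over a family of auxiliary denominators, and I expect essentially all of the work of the lemma to be concentrated here.
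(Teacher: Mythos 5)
Your reduction to the case $s=2$, the base case $s=1$, and the observation that $1,\omega',\omega_{i_s}$ stay linearly independent are all fine, but the proof never closes: in the degenerate case $(A+B)n=qm$ you explicitly leave the lower bound for $\eta=\|(\beta+\gamma)n\|$ unproved, and that is exactly where the whole content of the lemma sits. Moreover, the quantitative independence $|a_0+a_1\beta+a_2\gamma|\gg(\max_i|a_i|)^{-C}$ that you propose to ``manufacture'' cannot be extracted from the stated hypotheses: $\QQ$-linear independence of $1,\beta,\gamma$ is purely qualitative and carries no rate, and the finite type of $\beta$ and $\gamma$ separately says nothing about integer combinations $a_1\beta+a_2\gamma$. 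In fact no argument can close this gap from the hypotheses alone. Take a Liouville number $L$; by Khinchin's theorem almost every $\beta$ has type $1$, and since $\beta\mapsto L-\beta$ preserves Lebesgue measure, almost every $\beta$ also has $\tau(L-\beta)=1$; discarding the countably many $\beta$ for which $1,\beta,L-\beta$ are $\QQ$-dependent, one finds $\omega_1=\beta$, $\omega_2=L-\beta$ satisfying every hypothesis of the lemma while $\omega_1+\omega_2=L$ has infinite type. So the conclusion genuinely requires a stronger assumption (for instance that the vector $(\omega_1,\dots,\omega_\xi)$ is of finite type in the sense of simultaneous approximation, or directly that all combinations $h_1\omega_1+\cdots+h_\xi\omega_\xi$ are of finite type, which is what Lemma~\ref{lem:ttr} actually needs), and your strategy necessarily stalls exactly where you say it does.

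For comparison, the paper's own proof is a two-line argument: it bounds $\|n(h_{i_1}\omega_{i_1}+\cdots+h_{i_s}\omega_{i_s})\|$ from above by $\|nh_{i_1}\omega_{i_1}\|+\cdots+\|nh_{i_s}\omega_{i_s}\|$ and concludes that $\tau(h_{i_1}\omega_{i_1}+\cdots+h_{i_s}\omega_{i_s})\le\tau(h_{i_1}\omega_{i_1})+\cdots+\tau(h_{i_s}\omega_{i_s})$. But that inequality runs the wrong way for such a conclusion: finiteness of type is a \emph{lower} bound on $\|n(\cdot)\|$ for all large $n$, while the triangle inequality only caps the distance of the combination to the integers by the individual distances, which could at best force the type of the sum to be large, never to be small. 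So your instinct that serious work is required at this point is sound; the resolution is not a cleverer elementary argument (none exists under the stated hypotheses, by the example above) but a strengthened hypothesis on the joint Diophantine behaviour of $\omega_1,\dots,\omega_\xi$.
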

\begin{proof}
For any $n\geqslant1$, we have
\begin{align*}
\big\|n(h_{i_1}\omega_{i_1}+\cdots+h_{i_s}\omega_{i_s})\big\|\leqslant
\big\|nh_{i_1}\omega_{i_1}\big\|+\cdots+\big\|nh_{i_s}\omega_{i_s}\big\|.
\end{align*}	
By the definition of the type of irrational number, we deduce that
\begin{align*}
 \tau(h_{i_1}\omega_{i_1}+\cdots+h_{i_s}\omega_{i_s})\leqslant \tau(h_{i_1}\omega_{i_1})+\cdots+\tau(h_{i_s}\omega_{i_s}),
\end{align*}
which implies the finite type of $h_{i_1}\omega_{i_1}+\cdots+h_{i_s}\omega_{i_s}$.
\end{proof}

The following lemma gives a characterization of the numbers in the Beatty sequence $\cB_{\alpha, \beta}$.
\begin{lemma}\label{lem:Beatty}
A natural number $m$ has the form $\fl{\alpha n + \beta}$ if and only if $\cX_{\alpha, \beta} (m) = 1$, where $\cX_{\alpha, \beta} (m) \defeq \fl{-\alpha^{-1} (m-\beta)} - \fl{-\alpha^{-1} (m+1-\beta)}$.	
\end{lemma}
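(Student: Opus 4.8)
The plan is to prove Lemma~\ref{lem:Beatty} directly from the definition of the floor function by establishing a chain of equivalences. First I would observe that $\cX_{\alpha,\beta}(m)$ is always either $0$ or $1$: indeed, since $\alpha>1$ we have $0 < \alpha^{-1} < 1$, so the two real numbers $-\alpha^{-1}(m-\beta)$ and $-\alpha^{-1}(m+1-\beta)$ differ by exactly $\alpha^{-1} \in (0,1)$, which forces their integer parts to differ by $0$ or $1$. Hence it suffices to show that $\cX_{\alpha,\beta}(m) = 1$ if and only if $m \in \cB_{\alpha,\beta}$, i.e.\ if and only if there exists $n \in \NN$ with $m = \fl{\alpha n + \beta}$.

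Next I would translate the condition $m = \fl{\alpha n + \beta}$ into an inequality: this holds for some integer $n$ precisely when $m \le \alpha n + \beta < m+1$ for some integer $n$, equivalently $\alpha^{-1}(m - \beta) \le n < \alpha^{-1}(m+1-\beta)$, equivalently the half-open interval $[\alpha^{-1}(m-\beta),\, \alpha^{-1}(m+1-\beta))$ contains an integer. The key step is then the standard fact that a half-open interval $[u, v)$ of length $v - u \le 1$ contains an integer if and only if $\fl{-u} - \fl{-v} = 1$ (and contains none iff this difference is $0$); this is precisely the combinatorial identity underlying the appearance of $-\alpha^{-1}$ rather than $\alpha^{-1}$ in the statement. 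I would prove this by writing $\fl{-u} - \fl{-v} = \fl{-u} + \lceil v \rceil - 1$ when $v \notin \ZZ$ — using $\fl{-v} = -\lceil v\rceil$ — and checking both cases directly, or more cleanly by noting $\#\big([u,v) \cap \ZZ\big) = \fl{-u} - \fl{-v}$ in general when $v-u < 1$ forces this count to be $0$ or $1$; a short case analysis on whether $u$ and $v$ are integers handles the boundary behaviour, and here $v = \alpha^{-1}(m+1-\beta)$ while $u = \alpha^{-1}(m - \beta)$ differ by the irrational (or at least non-integer, since $\alpha$ is irrational) quantity $\alpha^{-1}$, so at most one of $u,v$ can be an integer, simplifying the analysis.

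Assembling these pieces: $m \in \cB_{\alpha,\beta} \iff$ the interval $[\alpha^{-1}(m-\beta), \alpha^{-1}(m+1-\beta))$ contains an integer $\iff \fl{-\alpha^{-1}(m-\beta)} - \fl{-\alpha^{-1}(m+1-\beta)} = 1 \iff \cX_{\alpha,\beta}(m) = 1$, which is the claim. I do not expect any serious obstacle here — the lemma is a routine but careful unwinding of definitions; the only point demanding attention is getting the floor-identity for the half-open interval exactly right, in particular making sure the endpoints are handled in the direction consistent with $m = \fl{\alpha n + \beta}$ being a \emph{closed-below, open-above} constraint on $\alpha n + \beta$, which is why the sign flip to $-\alpha^{-1}$ and the subtraction of consecutive floors appears.
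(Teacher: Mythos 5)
Your proposal is correct and follows essentially the same route as the paper, whose entire proof is the observation that $m=\fl{\alpha n+\beta}$ for some integer $n$ if and only if $\frac{m-\beta}{\alpha}\le n<\frac{m-\beta+1}{\alpha}$; you simply make explicit the counting step $\#\big([u,v)\cap\ZZ\big)=\fl{-u}-\fl{-v}$ that the paper leaves implicit. (One cosmetic slip in your aside: since $\fl{-v}=-\rf{v}$, the identity should read $\fl{-u}-\fl{-v}=\fl{-u}+\rf{v}$ without the extra $-1$; the cleaner counting argument you give is the right one.)
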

\begin{proof}
Note that an integer $m$ has the form $m= \fl{\alpha n+\beta} $ for some integer $n$ if and only if
\begin{equation*}
\frac{m-\beta}{\alpha}\leqslant n<\frac{m-\beta+1}{\alpha}.
\end{equation*}	
\end{proof}

Finally, we use the following lemma, which provides a characterization of the numbers that
occur in the Piatetski--Shapiro sequence $\sN^{(c)}$.

\begin{lemma}
\label{lem:PS}
A natural number $m$ has the form $\fl{n^c}$ if and only if $\cX^{(c)}(m) = 1$, where
$\cX^{(c)}(m) \defeq \fl{-m^\gamma} - \fl{-(m+1)^\gamma}$.  Moreover,
$$
\cX^{(c)}(m)=\gamma m^{\gamma-1} + \psi(-(m+1)^\gamma) - \psi(-m^\gamma) + O(m^{\gamma-2}).
$$
\end{lemma}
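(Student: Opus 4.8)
The plan is to dispatch the two assertions by elementary manipulations of the floor function. For the characterization, note first that $m=\fl{n^c}$ for some $n\in\NN$ is equivalent to $m\le n^c<m+1$, and since $t\mapsto t^\gamma$ is increasing (with $\gamma=1/c>0$) this is in turn equivalent to the half-open interval $[m^\gamma,(m+1)^\gamma)$ containing an integer $n$; the requirement $n\ge1$ is automatic because $m\ge1$ forces $m^\gamma\ge1$. Next I would count those integers: the number of $n\in\ZZ$ with $a\le n<b$ is $\lceil b\rceil-\lceil a\rceil$, and using the identity $\lceil t\rceil=-\fl{-t}$ this equals $\fl{-a}-\fl{-b}$; taking $a=m^\gamma$ and $b=(m+1)^\gamma$ gives exactly $\cX^{(c)}(m)$. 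Hence $m\in\sN^{(c)}$ if and only if $\cX^{(c)}(m)\ge1$.

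To replace the inequality $\cX^{(c)}(m)\ge1$ by the equality $\cX^{(c)}(m)=1$ I would bound the length of the interval $[m^\gamma,(m+1)^\gamma)$. Since $\gamma\in(0,1)$, the function $g(x)=(x+1)^\gamma-x^\gamma$ has $g'(x)=\gamma\big((x+1)^{\gamma-1}-x^{\gamma-1}\big)<0$ on $[1,\infty)$, so $g$ is decreasing there and $g(m)\le g(1)=2^\gamma-1<1$ for all $m\ge1$. Thus the interval has length strictly less than $1$ and contains at most one integer, so $\cX^{(c)}(m)\in\{0,1\}$ for every natural number $m$; combined with the previous paragraph this proves the stated equivalence.

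For the asymptotic formula I would write $\fl{t}=t-\psi(t)-\tfrac12$ (which is just the definition of $\psi$) in both terms of $\cX^{(c)}(m)=\fl{-m^\gamma}-\fl{-(m+1)^\gamma}$. The additive constants $-\tfrac12$ cancel and one obtains
$$
\cX^{(c)}(m)=(m+1)^\gamma-m^\gamma+\psi\big(-(m+1)^\gamma\big)-\psi\big(-m^\gamma\big).
$$
It then suffices to estimate $(m+1)^\gamma-m^\gamma=m^\gamma\big((1+1/m)^\gamma-1\big)$; Taylor's theorem with Lagrange remainder (or the mean value theorem applied to $t\mapsto t^\gamma$) yields $(1+1/m)^\gamma-1=\gamma/m+O(m^{-2})$, and multiplying by $m^\gamma$ gives $(m+1)^\gamma-m^\gamma=\gamma m^{\gamma-1}+O(m^{\gamma-2})$, which is the claim. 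The proof is entirely routine; the one place that genuinely uses the hypothesis $c>1$ — equivalently $\gamma<1$ — is the length bound $2^\gamma-1<1$ needed to pin $\cX^{(c)}(m)$ down to $\{0,1\}$, and there are no real obstacles beyond careful bookkeeping with $\fl{\cdot}$, $\lceil\cdot\rceil$, and $\psi$.
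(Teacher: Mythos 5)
Your argument is correct and is exactly the interval-counting proof the paper has in mind: the paper merely remarks that the lemma follows as in Lemma~\ref{lem:Beatty} (i.e.\ $m=\fl{n^c}$ iff $m^\gamma\le n<(m+1)^\gamma$, with the interval of length $<1$ counted by $\fl{-m^\gamma}-\fl{-(m+1)^\gamma}$), and the asymptotic formula via $\fl{t}=t-\psi(t)-\tfrac12$ and the Taylor expansion of $(m+1)^\gamma-m^\gamma$ is the standard omitted computation. You have simply supplied the details the paper chose to omit, so no further comparison is needed.
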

\begin{proof}
The proof of Lemma \ref{lem:PS} is similar to that of Lemma \ref{lem:Beatty}, so we omit the details herein.	
\end{proof}

\begin{lemma}
For $1<c<\frac{2817}{2426}$, there holds
\begin{equation} \label{eq:PSthm}
\pi^{(c)}(x) = \sum_{p \le x} \cX^{(c)}(p) = \frac{x^\gamma}{\log x}
+ O\bigg(\frac{x^\gamma}{\log^2 x}\bigg).
\end{equation}
\end{lemma}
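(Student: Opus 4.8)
The plan is to start from Lemma~\ref{lem:PS}, which says $\cX^{(c)}(p)$ is exactly the indicator that the prime $p$ lies in $\sN^{(c)}$; hence $\sum_{p\le x}\cX^{(c)}(p)=\pi^{(c)}(x)$ and the first equality is immediate. For the asymptotic I would substitute the expansion
$$
\cX^{(c)}(m)=\gamma m^{\gamma-1}+\psi(-(m+1)^\gamma)-\psi(-m^\gamma)+O(m^{\gamma-2})
$$
from Lemma~\ref{lem:PS} into the sum over primes $p\le x$, producing three pieces: the main term $\gamma\sum_{p\le x}p^{\gamma-1}$, a trivial error $O\big(\sum_{p\le x}p^{\gamma-2}\big)$, and the sawtooth sum $\sum_{p\le x}\big(\psi(-(p+1)^\gamma)-\psi(-p^\gamma)\big)$. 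Since $\gamma-2<-1$ the middle sum is $O(1)$, and a routine Abel summation against the prime number theorem in the form $\pi(t)=t/\log t+O(t/\log^2 t)$ gives
$$
\gamma\sum_{p\le x}p^{\gamma-1}=\frac{x^\gamma}{\log x}+O\!\left(\frac{x^\gamma}{\log^2 x}\right),
$$
which is precisely the claimed main term. So everything is reduced to showing that the sawtooth sum is $O(x^\gamma/\log^2 x)$; in fact one obtains the stronger power saving $\ll x^{\gamma-\delta}$ for some $\delta>0$.

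To treat the sawtooth sum I would first pass from primes to the von Mangoldt function via Lemma~\ref{lem:index} together with a dyadic splitting into ranges $n\sim N$ with $N\le x$, reducing matters to bounding
$$
\sum_{n\sim N}\Lambda(n)\big(\psi(-(n+1)^\gamma)-\psi(-n^\gamma)\big).
$$
Next, apply Vaaler's approximation, Lemma~\ref{lem:Vaaler}, with a parameter $H$ to be optimized (a natural first choice being a small positive power of $N$, close to $N^{1-\gamma}$), replacing each $\psi(g(n))$ by the exponential polynomial $\sum_{0<|h|\le H}a_h\e(h g(n))$ with $a_h\ll 1/|h|$, the Vaaler error being controlled by the $b_h$-sum and contributing $\ll N(\log N)/H$, which is admissible for suitable $H$. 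This turns the problem into estimating the linear exponential sums over primes
$$
S_h(N)\defeq\sum_{n\sim N}\Lambda(n)\,\e\big(-h\,n^\gamma\big)
$$
(and the companion sum with $(n+1)^\gamma$ in place of $n^\gamma$, handled identically since $t\mapsto(t+1)^\gamma$ has the same derivative sizes on $[N,2N]$ as $t\mapsto t^\gamma$), weighted by the $a_h$ and summed over $0<|h|\le H$; summing a bound of the shape $S_h(N)\ll h^{A}N^{1-\eta}$ against $a_h\ll1/|h|$ costs only an extra factor $H^{A}\log H$, absorbed by the choice of $H$ and of the small exponents.

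The main obstacle is the estimation of $S_h(N)$, which is the heart of the Piatetski--Shapiro method. Here I would apply Vaughan's identity to decompose $S_h(N)$ into Type~I sums $\sum_{m}\big(\sum_{\ell}\e(-h(m\ell)^\gamma)\big)$ with well-behaved coefficients in $m$ and Type~II sums with arbitrary coefficients in both variables, and then estimate each by the van der Corput method / the theory of exponent pairs, exploiting that the derivatives of $t\mapsto h t^\gamma$ on $[N,2N]$ have controlled size. Balancing the admissible Type~I and Type~II ranges against the length $H$ of the $h$-sum is exactly what imposes the restriction on $c$, and carrying this through with the optimal exponent-pair input is the content of Rivat--Sargos~\cite{RiSa}; this yields a power saving in the sawtooth sum throughout the range $1<c<\tfrac{2817}{2426}$, and combining it with the main term computed above completes the proof. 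I expect the exponential-sum bookkeeping in this last step — tracking the dependence on $h$ and optimizing $H$ while keeping the Type~II estimate in force — to be the only genuinely delicate part; all other steps are standard.
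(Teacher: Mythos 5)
The paper does not prove this lemma at all: it is quoted directly as Theorem~1 of Rivat and Sargos \cite{RiSa}, so the ``proof'' is a one-line citation. Your proposal reconstructs the standard reduction (the first equality from Lemma~\ref{lem:PS}, the main term $\gamma\sum_{p\le x}p^{\gamma-1}=x^\gamma/\log x+O(x^\gamma/\log^2 x)$ by partial summation against the prime number theorem, the trivial $O(1)$ from the $p^{\gamma-2}$ tail, and the reduction of the sawtooth sum via Lemma~\ref{lem:index} and Vaaler's approximation to exponential sums $\sum_{n\sim N}\Lambda(n)\e(hn^\gamma)$), and then defers the genuinely hard estimates to the same reference, so in substance the two resolutions coincide; your version just makes explicit the bookkeeping that \cite{RiSa} performs internally. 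One caveat: your description of the decisive step as ``Vaughan's identity plus the optimal exponent-pair input'' understates what is needed for the stated range. Classical one-dimensional exponent-pair arguments (of the kind used in this paper's own Lemmas~\ref{Type-I-sum}--\ref{lem:exp}) only reach exponents like $12/11$; the full range $1<c<\tfrac{2817}{2426}$ in Rivat--Sargos rests on bounds for multidimensional exponential sums (Fouvry--Iwaniec and Robert--Sargos type estimates), not merely on choosing a good exponent pair. Since your argument invokes that step only through the citation, this is a mislabelling of the cited machinery rather than a gap in the proof, but as written the sketch would not by itself deliver the claimed range of $c$.
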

\begin{proof}
See Theorem 1 of Rivat and Sargos \cite{RiSa}.	
\end{proof}

\subsection{Upper bound estimate of exponential sums}
We begin with the decomposition of the von Mangoldt function by Heath--Brown.

\begin{lemma}\label{Heath-Brown-identity}
	Let $z\geqslant1$ and $k\geqslant1$. Then, for any $n\leqslant2z^k$, there holds
	\begin{equation*}
		\Lambda(n)=\sum_{j=1}^k(-1)^{j-1}\binom{k}{j}\mathop{\sum\cdots\sum}_{\substack{n_1n_2\cdots n_{2j}=n\\
				n_{j+1},\dots,n_{2j}\leqslant z }}(\log n_1)\mu(n_{j+1})\cdots\mu(n_{2j}).
	\end{equation*}
\end{lemma}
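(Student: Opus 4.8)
The plan is to obtain the identity purely formally, by manipulating Dirichlet series and then comparing coefficients of $n^{-s}$; no analytic input is really needed (one may work with formal Dirichlet series, or in the half-plane $\Re s>1$, where the finite sum below is entire and $\zeta$ converges absolutely). Set
$$
F(s)\defeq\sum_{m\le z}\mu(m)\,m^{-s},
$$
the truncation of $1/\zeta(s)$ at level $z$. The crucial remark is that $1-\zeta(s)F(s)$ has Dirichlet coefficients supported on integers $>z$: the coefficient of $n^{-s}$ in $\zeta(s)F(s)$ equals $\sum_{e\mid n,\,e\le z}\mu(e)$, which for $n\le z$ coincides with $\sum_{e\mid n}\mu(e)=\ind{n=1}$. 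Multiplying out, the $k$-fold product $(1-\zeta F)^k$ therefore has coefficients supported on integers $>z^k$.

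Next I would write the trivial splitting
$$
-\frac{\zeta'}{\zeta}
=-\frac{\zeta'}{\zeta}\bigl(1-(1-\zeta F)^k\bigr)+\Bigl(-\frac{\zeta'}{\zeta}\Bigr)(1-\zeta F)^k
$$
and expand the first term by the binomial theorem. Since $1-(1-\zeta F)^k=\sum_{j=1}^{k}(-1)^{j-1}\binom{k}{j}(\zeta F)^j$, one gets
$$
-\frac{\zeta'}{\zeta}\bigl(1-(1-\zeta F)^k\bigr)
=\sum_{j=1}^{k}(-1)^{j-1}\binom{k}{j}\bigl(-\zeta'\bigr)\,\zeta^{\,j-1}F^{\,j}.
$$
Reading off the coefficient of $n^{-s}$ on the right: the factor $-\zeta'(s)=\sum_{n_1}(\log n_1)n_1^{-s}$ supplies the weight $\log n_1$, each of the $j-1$ copies of $\zeta$ supplies a free variable $n_2,\dots,n_j$, and each of the $j$ copies of $F$ supplies a variable $n_{j+1},\dots,n_{2j}$ restricted to be $\le z$ together with the weight $\mu(n_{j+1})\cdots\mu(n_{2j})$, the product of all $2j$ variables being $n$. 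This is precisely the inner multiple sum in the statement.

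It then remains to check that the remainder $\bigl(-\zeta'/\zeta\bigr)(1-\zeta F)^k$ does not contribute to the coefficient of $n^{-s}$ when $n\le 2z^k$. Writing $(1-\zeta F)^k=\sum_e c_e\,e^{-s}$ with $c_e=0$ for $e\le z^k$, and $-\zeta'/\zeta=\sum_d\Lambda(d)\,d^{-s}$, the coefficient of the remainder at $n$ is $\sum_{de=n}\Lambda(d)c_e$; a nonzero term requires $e=n/d>z^k$, hence $d<n/z^k\le 2$, i.e.\ $d=1$, and then $\Lambda(1)=0$. So the remainder contributes nothing, and comparing coefficients of $n^{-s}$ in the displayed identity for $-\zeta'/\zeta$ yields the asserted formula for every $n\le 2z^k$.

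The only point requiring genuine care — and the one I would treat most explicitly — is the bookkeeping of the support conditions: that $1-\zeta F$ is supported on $n>z$ and hence $(1-\zeta F)^k$ on $n>z^k$ with no off-by-one slippage when $z$ is not an integer, together with the observation that it is exactly the elementary fact $\Lambda(1)=0$ that upgrades the admissible range from $n\le z^k$ to $n\le 2z^k$. Beyond this the argument is a routine coefficient comparison; alternatively one may simply invoke Heath--Brown's original derivation.
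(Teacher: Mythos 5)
Your argument is correct, and it is essentially the same derivation the paper relies on: the paper gives no proof of its own but cites Heath--Brown's original argument, which is precisely this expansion of $-\zeta'/\zeta$ using the truncation $F(s)=\sum_{m\le z}\mu(m)m^{-s}$, the binomial identity $1-(1-\zeta F)^k=\sum_{j=1}^k(-1)^{j-1}\binom{k}{j}(\zeta F)^j$, and the observation that $(1-\zeta F)^k$ is supported on integers $>z^k$ so that the remainder contributes nothing for $n\le 2z^k$ (via $\Lambda(1)=0$). Your coefficient bookkeeping, including the support and off-by-one checks, is accurate.
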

\begin{proof}
	See the arguments on pp. 1366--1367 of Heath--Brown \cite{HB}.
\end{proof}

\begin{lemma}\label{compare}
	Suppose that
	\begin{equation*}
		L(H)=\sum_{i=1}^mA_iH^{a_i}+\sum_{j=1}^nB_jH^{-b_j},
	\end{equation*}
	where~$A_i,\,B_j,\,a_i$~and~$b_j$~are positive. Assume further that $H_1\leqslant H_2$. Then there exists
	some $\mathscr{H}$ with
	$H_1\leqslant\mathscr{H}\leqslant H_2$ and
	\begin{equation*}
		L(\mathscr{H})\ll \sum_{i=1}^mA_iH_1^{a_i}+\sum_{j=1}^nB_jH_2^{-b_j}+\sum_{i=1}^m\sum_{j=1}^n\big(A_i^{b_j}B_j^{a_i}\big)^{1/(a_i+b_j)}.
	\end{equation*}
	The implied constant depends only on $m$ and $n$.
\end{lemma}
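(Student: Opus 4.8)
The plan is to write $L(H)=f(H)+g(H)$, where $f(H)=\sum_{i=1}^{m}A_iH^{a_i}$ is continuous and strictly increasing on $(0,\infty)$ and $g(H)=\sum_{j=1}^{n}B_jH^{-b_j}$ is continuous and strictly decreasing, and then to choose $\mathscr{H}\in[H_1,H_2]$ so as to balance these two monotone pieces as well as possible. The point is that the increasing part wants $H$ as small as possible while the decreasing part wants $H$ as large as possible, so either one of the endpoints already works, or there is an interior balance point whose value is governed by the cross terms.

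First I would dispose of two easy cases. If $g(H_1)\le f(H_1)$, take $\mathscr{H}=H_1$; then $L(\mathscr{H})\le 2f(H_1)=2\sum_{i}A_iH_1^{a_i}$, which is controlled by the first sum on the right-hand side. Symmetrically, if $f(H_2)\le g(H_2)$, take $\mathscr{H}=H_2$ and bound $L(\mathscr{H})\le 2g(H_2)=2\sum_{j}B_jH_2^{-b_j}$. In either case the claimed bound holds with constant $2$.

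In the remaining case $f(H_1)<g(H_1)$ and $f(H_2)>g(H_2)$, so $f-g$ is continuous, strictly increasing, negative at $H_1$ and positive at $H_2$; hence there is a (unique) $\mathscr{H}\in(H_1,H_2)$ with $f(\mathscr{H})=g(\mathscr{H})=:Y$, and $L(\mathscr{H})=2Y$. To estimate $Y$, pick indices $i_0,j_0$ for which $A_{i_0}\mathscr{H}^{a_{i_0}}\ge f(\mathscr{H})/m$ and $B_{j_0}\mathscr{H}^{-b_{j_0}}\ge g(\mathscr{H})/n$, i.e.\ the dominant terms of $f$ and $g$ at $\mathscr{H}$. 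From the first inequality, $\mathscr{H}\ge\bigl(Y/(mA_{i_0})\bigr)^{1/a_{i_0}}$; from the second, $\mathscr{H}\le\bigl(nB_{j_0}/Y\bigr)^{1/b_{j_0}}$. Combining these and clearing denominators yields $Y^{a_{i_0}+b_{j_0}}\le (mA_{i_0})^{b_{j_0}}(nB_{j_0})^{a_{i_0}}$, hence
$$
Y\le m^{b_{j_0}/(a_{i_0}+b_{j_0})}\,n^{a_{i_0}/(a_{i_0}+b_{j_0})}\bigl(A_{i_0}^{b_{j_0}}B_{j_0}^{a_{i_0}}\bigr)^{1/(a_{i_0}+b_{j_0})}\le mn\sum_{i=1}^{m}\sum_{j=1}^{n}\bigl(A_i^{b_j}B_j^{a_i}\bigr)^{1/(a_i+b_j)},
$$
using $b_{j_0}/(a_{i_0}+b_{j_0})\le 1$ and $a_{i_0}/(a_{i_0}+b_{j_0})\le 1$ (which hold since $a_{i_0},b_{j_0}>0$) to absorb the powers of $m$ and $n$. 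Therefore $L(\mathscr{H})=2Y\ll \sum_{i,j}\bigl(A_i^{b_j}B_j^{a_i}\bigr)^{1/(a_i+b_j)}$ with implied constant $2mn$, as required.

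The monotonicity, continuity, and intermediate-value steps are routine; the only point requiring a little care is the final algebraic step, where the cross terms $\bigl(A_i^{b_j}B_j^{a_i}\bigr)^{1/(a_i+b_j)}$ are extracted from the balance relation $f(\mathscr{H})=g(\mathscr{H})$, together with the verification that the constant genuinely depends only on $m$ and $n$ — which succeeds precisely because every exponent appearing on $m$ or $n$ lies in $[0,1]$. I do not expect any real obstacle: this is a standard "choose the optimal $H$" device of van der Corput type, and the argument above is self-contained.
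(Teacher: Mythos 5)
Your proof is correct. The paper offers no argument of its own for this lemma---it simply cites Lemma 3 of Srinivasan---so there is nothing internal to compare against; your argument (endpoint cases, then the intermediate-value balance $f(\mathscr{H})=g(\mathscr{H})=Y$, then extracting dominant indices with $A_{i_0}\mathscr{H}^{a_{i_0}}\ge Y/m$ and $B_{j_0}\mathscr{H}^{-b_{j_0}}\ge Y/n$ to get $Y^{a_{i_0}+b_{j_0}}\le (mA_{i_0})^{b_{j_0}}(nB_{j_0})^{a_{i_0}}$) is precisely the standard proof of this optimization device, and every step checks, with implied constant $2mn$ depending only on $m$ and $n$ as required. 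The only implicit hypothesis is $H_1>0$, which is forced anyway by the presence of the negative powers $H^{-b_j}$ in $L$.
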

\begin{proof}
	See Lemma 3 of Srinivasan \cite{Srin}.
\end{proof}

For real numbers $m_1$ and $m_2$, the sum of the form
\begin{equation*}
	\mathop{\sum_{k\sim K}\sum_{\ell\sim L}}_{KL\asymp x}a_kb_\ell \mathbf{e} \big( h(k\ell)^\gamma + m_1k \ell + m_2 \big)
\end{equation*}
with $|a_k|\ll x^\varepsilon,|b_\ell|\ll x^\varepsilon$ for every fixed $\varepsilon>0$, it is usually called a ``Type I'' sum, denoted by $S_I(K,L)$, if $b_\ell=1$ or $b_\ell=\log\ell$; otherwise it is called a ``Type II'' sum, denoted by $S_{II}(K,L)$.

\begin{lemma}\label{derivative-estimate}
	Suppose that $f(x):[a, b]\to \mathbb{R}$ has continuous derivatives of arbitrary
	order on $[a,b]$, where $1\leqslant a<b\leqslant2a$. Suppose further that
	\begin{equation*}
		\big|f^{(j)}(x)\big|\asymp \lambda_j,\qquad j\geqslant1, \qquad x\in[a, b].
	\end{equation*}
	Then we have
	\begin{equation}\label{2nd-deri-estimate}
		\sum_{a<n\leqslant b}e\big(f(n)\big)\ll a\lambda_2^{1/2}+\lambda_2^{-1/2},
	\end{equation}
	and
	\begin{equation}\label{3rd-deri-estimate}
		\sum_{a<n\leqslant b}e\big(f(n)\big)\ll a\lambda_3^{1/6}+\lambda_3^{-1/3}.
	\end{equation}
\end{lemma}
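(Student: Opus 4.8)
These two bounds are the classical second and third derivative tests of van der Corput, so the most economical route is to quote them from a standard reference, for instance the monograph of Graham and Kolesnik \cite{GraKol}; what follows is the plan for a self--contained treatment. First observe that, since every $f^{(j)}$ is continuous and $\big|f^{(j)}\big|\asymp\lambda_j$, each derivative keeps a fixed sign throughout $[a,b]$. Write $N=b-a$, so that $N\le a$ because $b\le 2a$; if $N<1$ the sum has at most one term and, the right--hand sides being $\gg 1$, both estimates hold trivially, so assume $N\ge 1$. For \eqref{2nd-deri-estimate} one may further assume that $\lambda_2$ is at most a suitably small absolute constant, the complementary range being covered by the trivial bound $\big|\sum_{a<n\le b}e(f(n))\big|\le N\le a\ll a\lambda_2^{1/2}$.

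The plan for \eqref{2nd-deri-estimate} is to use that $f'$ is strictly monotone on $[a,b]$ (because $f''$ has fixed sign) and moves by $\asymp\lambda_2$ across each unit interval, so its range has length $\ll N\lambda_2$. I would cut $(a,b]$ at the points where $\|f'(x)\|=\lambda_2^{1/2}$, producing $\ll N\lambda_2+1$ subintervals of two kinds. On a subinterval where $\|f'(x)\|<\lambda_2^{1/2}$ throughout, monotonicity of $f'$ together with $|f''|\asymp\lambda_2$ forces its length to be $\ll\lambda_2^{-1/2}$, so the trivial bound contributes $\ll\lambda_2^{-1/2}$ there; on a subinterval where $\|f'(x)\|\ge\lambda_2^{1/2}$ throughout, the Kusmin--Landau inequality (an elementary consequence of partial summation together with $\big|\sum_{n\le T}e(n\vartheta)\big|\ll\|\vartheta\|^{-1}$) again gives a contribution $\ll\lambda_2^{-1/2}$. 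Summing over the $\ll N\lambda_2+1$ subintervals yields $\big|\sum_{a<n\le b}e(f(n))\big|\ll(N\lambda_2+1)\lambda_2^{-1/2}\ll N\lambda_2^{1/2}+\lambda_2^{-1/2}\ll a\lambda_2^{1/2}+\lambda_2^{-1/2}$, which is \eqref{2nd-deri-estimate}. It is worth keeping the sharper form, with $N$ in place of $a$, as that is what feeds the next step.

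The plan for \eqref{3rd-deri-estimate} is to run the Weyl--van der Corput differencing (the $A$--process): for an integer $H$ with $1\le H\le N$,
\[
\Bigl|\sum_{a<n\le b}e(f(n))\Bigr|^2\ll\frac{N^2}{H}+\frac{N}{H}\sum_{1\le h\le H}\Bigl|\sum_{n\in I_h}e\bigl(f(n+h)-f(n)\bigr)\Bigr|,
\]
where each $I_h$ is a subinterval of $(a,b]$ of length $\le N$. For fixed $h$ the phase $g_h(x)=f(x+h)-f(x)$ satisfies $g_h''(x)=\int_x^{x+h}f'''(t)\,dt$, which keeps a fixed sign and has $\big|g_h''\big|\asymp h\lambda_3$; feeding $g_h$ into the sharp form of \eqref{2nd-deri-estimate} bounds the inner sum by $\ll N(h\lambda_3)^{1/2}+(h\lambda_3)^{-1/2}$. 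Summing over $h$ (using $\sum_{h\le H}h^{1/2}\ll H^{3/2}$ and $\sum_{h\le H}h^{-1/2}\ll H^{1/2}$) and then choosing $H$ of order $\min\{N,\lambda_3^{-1/3}\}$ to balance the resulting terms delivers \eqref{3rd-deri-estimate}, after taking square roots and using $N\le a$.

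I expect the only genuinely delicate points to be the Kusmin--Landau input and the block bookkeeping behind \eqref{2nd-deri-estimate}, and, for \eqref{3rd-deri-estimate}, the optimization over $H$ subject to $H\le N$ together with extracting the precise secondary term $\lambda_3^{-1/3}$; in the ranges where the bare $A$--process is lossy, the cleanest is to read off the third derivative test directly from \cite{GraKol}. Everything else is routine.
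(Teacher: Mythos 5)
Your treatment of \eqref{2nd-deri-estimate} is fine: the reduction to small $\lambda_2$, the splitting of $(a,b]$ according to $\|f'(x)\|\gtrless\lambda_2^{1/2}$, and the Kusmin--Landau input reproduce the standard proof of van der Corput's second derivative test, i.e.\ exactly the statement the paper simply quotes (Corollary 8.13 of Iwaniec--Kowalski, or Karatsuba).

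For \eqref{3rd-deri-estimate}, however, your plan has a genuine gap. The $A$--process combined with the (sharp, $N$ in place of $a$) second derivative test gives
$\bigl|\sum_{a<n\le b}e(f(n))\bigr|^2\ll N^2H^{-1}+N^2\lambda_3^{1/2}H^{1/2}+N\lambda_3^{-1/2}H^{-1/2}$,
and the optimal choice $H\asymp\min\{N,\lambda_3^{-1/3}\}$ yields only the \emph{classical} third derivative test
$\sum_{a<n\le b}e(f(n))\ll N\lambda_3^{1/6}+N^{1/2}\lambda_3^{-1/6}$.
The secondary term $N^{1/2}\lambda_3^{-1/6}$ is \emph{not} dominated by $a\lambda_3^{1/6}+\lambda_3^{-1/3}$ in the range $\lambda_3^{-1/3}\ll N\ll\lambda_3^{-2/3}$: for instance $N\asymp\lambda_3^{-7/12}$ gives $N^{1/2}\lambda_3^{-1/6}\asymp\lambda_3^{-11/24}$, whereas $N\lambda_3^{1/6}\asymp\lambda_3^{-5/12}$ and $\lambda_3^{-1/3}$ are both smaller. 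No choice of $H\le N$ in the differencing removes this term, and your fallback of ``reading the third derivative test off \cite{GraKol}'' does not help, since the third derivative test there is precisely the classical form with the $N^{1/2}\lambda_3^{-1/6}$ term. The estimate \eqref{3rd-deri-estimate} as stated is Sargos's refinement of the third derivative test (Corollary 4.2 of \cite{Sarg}), obtained by a genuinely different and deeper argument (his analysis of short exponential sums via integer points near curves), and that citation is all the paper gives as proof. The sharper secondary term $\lambda_3^{-1/3}$ is not cosmetic: it is what later produces the $|h|^{-1/3}x^{1-\gamma/3}$ term in the Type~I estimate, so replacing \eqref{3rd-deri-estimate} by the classical bound would weaken the subsequent lemmas. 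To fix your write-up, either quote Sargos directly for \eqref{3rd-deri-estimate} or reproduce his argument; the $AB$-style elementary route you sketch cannot reach it.
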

\begin{proof}
	For (\ref{2nd-deri-estimate}), one can see Corollary 8.13 of Iwaniec and Kowalski \cite{IwKo}, or Theorem 5
	of Chapter 1 in Karatsuba \cite{Kara}. For (\ref{3rd-deri-estimate}), one can see Corollary 4.2 of
	Sargos \cite{Sarg}.
\end{proof}

\begin{lemma}\label{Type-I-sum}
Suppose that $|a_k|\ll 1,b_\ell=1$ or $\log\ell,KL\asymp x$. Then if $K\ll x^{1/2}$, there holds
\begin{equation*}
S_{I}(K,L)\ll |h|^{1/6}x^{\gamma/6+3/4 + \eps} + |h|^{-1/3}x^{1-\gamma/3 + \eps}.
\end{equation*}
\end{lemma}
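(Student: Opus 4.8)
The plan is to estimate the Type I sum
$$
S_I(K,L)=\mathop{\sum_{k\sim K}\sum_{\ell\sim L}}_{KL\asymp x}a_k b_\ell\,\e\big(h(k\ell)^\gamma+m_1k\ell+m_2\big)
$$
by fixing $k$ and treating the inner sum over $\ell$ as an exponential sum to which the derivative bounds of Lemma~\ref{derivative-estimate} apply. Since $b_\ell=1$ or $\log\ell$, the inner sum is essentially $\sum_{\ell\sim L}\e(f(\ell))$ (after partial summation to remove the smooth weight $\log\ell$, at the cost of a harmless $x^\eps$), where $f(\ell)=h(k\ell)^\gamma+m_1k\ell+m_2$. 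The linear term $m_1k\ell+m_2$ contributes nothing to the derivatives of order $\ge2$, so $f''(\ell)\asymp |h|k^\gamma L^{\gamma-2}$ and more generally $f^{(j)}(\ell)\asymp|h|k^\gamma L^{\gamma-j}$ for $j\ge2$; the implied constants depend only on $\gamma$ (hence on $c$).

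First I would apply the second-derivative bound \eqref{2nd-deri-estimate} with $\lambda_2\asymp|h|k^\gamma L^{\gamma-2}$, obtaining
$$
\sum_{\ell\sim L}\e(f(\ell))\ll L\big(|h|k^\gamma L^{\gamma-2}\big)^{1/2}+\big(|h|k^\gamma L^{\gamma-2}\big)^{-1/2}
\ll |h|^{1/2}k^{\gamma/2}L^{\gamma/2}+|h|^{-1/2}k^{-\gamma/2}L^{1-\gamma/2}.
$$
Then I would sum trivially over $k\sim K$, using $|a_k|\ll1$ and $KL\asymp x$. Writing $k^{\gamma/2}\asymp K^{\gamma/2}$ and $L\asymp x/K$, the first term becomes $|h|^{1/2}K\cdot K^{\gamma/2}(x/K)^{\gamma/2}=|h|^{1/2}K x^{\gamma/2}$, and the second becomes $|h|^{-1/2}K\cdot K^{-\gamma/2}(x/K)^{1-\gamma/2}=|h|^{-1/2}K^{\gamma/2}x^{1-\gamma/2}$. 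Invoking the hypothesis $K\ll x^{1/2}$ bounds $K$ by $x^{1/2}$ and $K^{\gamma/2}$ by $x^{\gamma/4}$, which gives
$$
S_I(K,L)\ll |h|^{1/2}x^{\gamma/2+1/2+\eps}+|h|^{-1/2}x^{1-\gamma/4+\eps}.
$$
This is not yet the claimed bound, so next I would instead use the third-derivative bound \eqref{3rd-deri-estimate} with $\lambda_3\asymp|h|k^\gamma L^{\gamma-3}$, yielding an inner estimate $\ll L(|h|k^\gamma L^{\gamma-3})^{1/6}+(|h|k^\gamma L^{\gamma-3})^{-1/3}$; summing over $k\sim K$ and again using $K\ll x^{1/2}$ with $KL\asymp x$ produces terms of the shape $|h|^{1/6}x^{\gamma/6}\cdot(\text{power of }x)$ and $|h|^{-1/3}x^{1-\gamma/3}\cdot(\text{power of }x)$; tracking the exponents carefully (and using $K\le x^{1/2}$ to bound the $K$-dependence) gives exactly $|h|^{1/6}x^{\gamma/6+3/4+\eps}+|h|^{-1/3}x^{1-\gamma/3+\eps}$. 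In fact, one likely needs to balance the two derivative estimates via Lemma~\ref{compare} (or simply take whichever of the two inner bounds is smaller as a function of $L$), and the exponent $3/4$ in the first main term is the signature of combining $K\ll x^{1/2}$ with the third-derivative test — this points to \eqref{3rd-deri-estimate} as the tool that produces the stated result, with \eqref{2nd-deri-estimate} used only in a complementary range of $L$.

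The main obstacle I expect is the bookkeeping of exponents: one must verify that for \emph{every} splitting $KL\asymp x$ with $K\ll x^{1/2}$, after choosing the better of the two derivative bounds for the $\ell$-sum (or interpolating via Lemma~\ref{compare}), the result is dominated by the two stated terms uniformly in $K$. The worst case occurs at the boundary $K\asymp x^{1/2}$, $L\asymp x^{1/2}$, where one should check the third-derivative term $L\lambda_3^{1/6}$ summed over $k$ does not exceed $|h|^{1/6}x^{\gamma/6+3/4+\eps}$, and similarly the ``minor'' term $\lambda_3^{-1/3}$ summed over $k$ stays below $|h|^{-1/3}x^{1-\gamma/3+\eps}$. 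A secondary technical point is ensuring the derivative asymptotics $f^{(j)}(\ell)\asymp\lambda_j$ genuinely hold on each dyadic block $\ell\sim L$ (rather than merely $\ll$), which is immediate here since $(k\ell)^\gamma$ has derivatives of fixed sign and size on any dyadic interval; the linear term is annihilated by differentiating twice, so it causes no trouble. Everything else — partial summation against $\log\ell$, absorbing $|a_k|\ll1$, collecting the finitely many $\eps$'s — is routine.
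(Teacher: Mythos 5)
Your final plan coincides with the paper's proof: apply the third-derivative estimate \eqref{3rd-deri-estimate} to the inner sum over $\ell$ with $\lambda_3\asymp|h|K^{\gamma}L^{\gamma-3}$, sum trivially over $k\sim K$, and use $K\ll x^{1/2}$ so that the first term $|h|^{1/6}x^{\gamma/6+1/2}K^{1/2}$ is $\ll|h|^{1/6}x^{\gamma/6+3/4}$ while the second is exactly $|h|^{-1/3}x^{1-\gamma/3}$. Your hedging about balancing with \eqref{2nd-deri-estimate} via Lemma~\ref{compare} is unnecessary (the third-derivative test alone covers the whole range $K\ll x^{1/2}$), but the exponent bookkeeping you defer does check out, so the argument is correct and essentially identical to the paper's.
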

\begin{proof}
Set $f(\ell)=h(k\ell)^\gamma+m_1 k\ell + m_2$. It is easy to see that
\begin{equation*}
f'''(\ell)=\gamma(\gamma-1)(\gamma-2)hk^\gamma\ell^{\gamma-3}\asymp |h| K^{\gamma}L^{\gamma-3}.
\end{equation*}
If $K\ll x^{1/2}$, then by (\ref{3rd-deri-estimate}) of Lemma \ref{derivative-estimate}, we deduce that
\begin{align*}
x^{-\eps} \cdot S_{I}(K,L)
\ll & \,\, \sum_{k\sim K} \Bigg|\sum_{\ell\sim L}\mathbf{e}\big(f(\ell)\big)\Bigg|
               \nonumber \\
\ll & \,\, \sum_{k\sim K} \Big( L \big( |h|K^{\gamma}L^{\gamma-3} \big)^{1/6} +
           \big( |h|K^{\gamma}L^{\gamma-3} \big)^{-1/3} \Big)
               \nonumber \\
\ll & \,\, |h|^{1/6}x^{\gamma/6+1/2}K^{1/2} + |h|^{-1/3}x^{1-\gamma/3}
               \nonumber \\
\ll & \,\, |h|^{1/6}x^{\gamma/6+3/4}+|h|^{-1/3}x^{1-\gamma/3},
\end{align*}
which completes the proof of Lemma \ref{Type-I-sum}.
\end{proof}

\begin{lemma}\label{Type-II-sum}
Suppose that $|a_k|\ll 1,|b_\ell|\ll1$ with $k\sim K,\ell\sim L$ and $KL\asymp x$. Then if $x^{1/2}\ll K\ll x^{19/25}$, there holds
\begin{equation*}
S_{II}(K,L) \ll |h|^{1/4} x^{\gamma/4+5/8} + |h|^{-1/4}x^{1-\gamma/4} + x^{22/25} + |h|^{1/6}x^{\gamma/6+3/4}.
	\end{equation*}
\end{lemma}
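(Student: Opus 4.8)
The plan is to treat $S_{II}(K,L)$ by a Weyl–van der Corput (Cauchy–Schwarz plus shifting) argument on the longer variable, reducing the bilinear sum to a linear exponential sum with a second derivative of controlled size, and then to optimize the resulting expression in the shift parameter using Lemma~\ref{compare}. First I would normalize: since $|a_k|,|b_\ell|\ll 1$ and we only need an upper bound, Cauchy–Schwarz in $k$ gives $|S_{II}(K,L)|^2 \ll K \sum_{k\sim K} \big| \sum_{\ell\sim L} b_\ell \e(h(k\ell)^\gamma + m_1 k\ell + m_2)\big|^2$, and expanding the square and swapping the order of summation produces an inner sum over $k\sim K$ of the form $\sum_k \e\big(h k^\gamma((\ell_1)^\gamma-(\ell_2)^\gamma) + m_1 k(\ell_1-\ell_2)\big)$ for pairs $\ell_1,\ell_2\sim L$. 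Writing $\ell_1 = \ell$, $\ell_2 = \ell + r$ with $|r| \le L$, the phase as a function of $k$ is $g(k) = h k^\gamma\big((\ell+r)^\gamma - \ell^\gamma\big) + m_1 r k$, whose second derivative satisfies $|g''(k)| \asymp |h|\,|r|\,K^{\gamma-2}L^{\gamma-1}$ for $r\neq 0$ (using $(\ell+r)^\gamma-\ell^\gamma \asymp rL^{\gamma-1}$).

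Next I would apply the second-derivative bound \eqref{2nd-deri-estimate} of Lemma~\ref{derivative-estimate} to the inner sum over $k$, getting $\ll K(|h||r|K^{\gamma-2}L^{\gamma-1})^{1/2} + (|h||r|K^{\gamma-2}L^{\gamma-1})^{-1/2}$ for each nonzero $r$, plus the diagonal contribution $r=0$ which gives $\ll KL$. Summing over $r$ with $1\le |r|\le L$ (the $r^{1/2}$ summing to $\ll L^{3/2}$ and the $r^{-1/2}$ to $\ll L^{1/2}$) yields
$$
|S_{II}(K,L)|^2 \ll x^{\eps}\Big( KL^2 + K^{\gamma/2}L^{\gamma/2+1/2}\cdot KL^{3/2}\cdot|h|^{1/2} + |h|^{-1/2}K^{1-\gamma/2}L^{1/2-\gamma/2}L^{1/2}\Big),
$$
and after substituting $KL\asymp x$ this collapses (modulo $x^\eps$) to a bound of the shape $|S_{II}(K,L)|^2 \ll x^\eps\big(x^2K^{-1} + |h|^{1/2}x^{\gamma/2+5/4}K^{1/2} + |h|^{-1/2}x^{1-\gamma/2}\big)$. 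Taking square roots and using the hypothesis $K\ll x^{19/25}$ to bound the first term by $x^{22/25}$, and $K\ll x^{19/25}$ again to bound the second by $|h|^{1/4}x^{\gamma/4+5/8}\cdot x^{19/100}$ — here I would need to check the exponents balance against the stated $|h|^{1/4}x^{\gamma/4+5/8}$, which suggests the argument actually needs to be run in stages or the shift length $R$ kept as a free parameter $1\le R\le L$ rather than taken all the way to $L$.

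This last point is the main obstacle: a crude single Cauchy–Schwarz with the full shift range $|r|\le L$ will not by itself produce the clean four-term bound in the statement, so I expect the actual proof keeps an auxiliary parameter $R$ (the van der Corput shift length), obtains a bound $|S_{II}|^2 \ll x^\eps(x^2R^{-1}K^{-1}\cdot R + \dots)$ depending on $R$, and then invokes Lemma~\ref{compare} to choose $R$ optimally in $[1, L]$, which is exactly what Lemma~\ref{compare} is designed for and why it was stated. The term $|h|^{1/6}x^{\gamma/6+3/4}$ in the conclusion is presumably inherited from applying Lemma~\ref{Type-I-sum}-type (third-derivative) reasoning when $R$ hits its lower endpoint $1$, or from a boundary case of the optimization; I would handle that by noting that when the van der Corput process degenerates one falls back on the third-derivative estimate \eqref{3rd-deri-estimate} exactly as in Lemma~\ref{Type-I-sum}. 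The routine but delicate part is the bookkeeping of the exponents $\gamma$ under $KL\asymp x$ and verifying that the range $x^{1/2}\ll K\ll x^{19/25}$ is precisely what makes the first error term $x^{22/25}$ dominate the van der Corput diagonal; I would carry that out at the end once the parametric bound is in hand.
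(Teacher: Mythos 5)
You end up pointing at the right strategy --- the paper's proof is exactly the Weyl--van der Corput inequality with the shift length $Q$ kept as a free parameter in $[1,L]$, the second-derivative bound \eqref{2nd-deri-estimate} applied to the inner $k$-sum whose phase satisfies $g''(k)\asymp |h|K^{\gamma-2}L^{\gamma-1}|q|$, and then Lemma~\ref{compare} to optimize over $Q$ --- but your write-up stops precisely where the work is. The parametric bound
\[
|S_{II}(K,L)|^2\ll K^2L^2Q^{-1}+|h|^{1/2}K^{1+\gamma/2}L^{\gamma/2+3/2}Q^{1/2}+|h|^{-1/2}K^{2-\gamma/2}L^{5/2-\gamma/2}Q^{-1/2}
\]
is never derived, the optimization via Lemma~\ref{compare} is never carried out, and the final bookkeeping (that $x^{1/2}\ll K\ll x^{19/25}$ turns the optimized terms $|h|^{1/4}x^{\gamma/4+3/4}K^{-1/4}$, $K^{1/2}x^{1/2}$, $|h|^{1/6}x^{\gamma/6+5/6}K^{-1/6}$, $K^{-1/4}x$ into exactly the four stated terms) is explicitly deferred. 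Since this derivation is the entire content of the lemma, the proposal as it stands has a genuine gap, even though the completed version of your plan would coincide with the paper's argument.

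Two concrete corrections. First, the one computation you do carry out (plain Cauchy--Schwarz with the full shift range $|r|\le L$) is, as you acknowledge, insufficient, but it also contains slips: with Cauchy--Schwarz in $k$ the diagonal is $K^2L\asymp Kx$, not $KL^2$, and the off-diagonal term $|h|^{1/2}K^{1+\gamma/2}L^{\gamma/2+2}$ collapses to $|h|^{1/2}x^{\gamma/2+2}K^{-1}$, not $|h|^{1/2}x^{\gamma/2+5/4}K^{1/2}$; after taking square roots this route only yields $|h|^{1/4}x^{\gamma/4+1}K^{-1/2}$, which for $K$ near $x^{1/2}$ exceeds the claimed $|h|^{1/4}x^{\gamma/4+5/8}$ by $x^{1/8}$. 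Second, your conjecture that the term $|h|^{1/6}x^{\gamma/6+3/4}$ comes from a third-derivative fallback is not how it arises: no third-derivative estimate is used in the Type~II bound at all. It is the cross term $\big(A^{b}B^{a}\big)^{1/(a+b)}$ of Lemma~\ref{compare} formed from $A Q^{1/2}=|h|^{1/2}K^{1+\gamma/2}L^{\gamma/2+3/2}Q^{1/2}$ and $BQ^{-1}=K^2L^2Q^{-1}$, namely $|h|^{1/3}x^{\gamma/3+5/3}K^{-1/3}$ before taking square roots, which becomes $|h|^{1/6}x^{\gamma/6+3/4}$ upon using $K\gg x^{1/2}$. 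So the term is produced internally by the optimization, not by falling back on Lemma~\ref{Type-I-sum}-type reasoning.
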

\begin{proof}
Let $Q$, which satisfies $1 < Q < L$, be a parameter which will be chosen later. By the Weyl--van der Corput inequality
(see Lemma 2.5 of Graham and Kolesnik \cite{GraKol}), we have
\begin{align}\label{Weyl-inequality}
&\,\, \Bigg| \mathop{\sum_{k\sim K} \sum_{\ell\sim L}}_{KL\asymp x} a_k b_\ell \mathbf{e} \big(h(k\ell)^\gamma + m_1 k\ell + m_2\big)\Bigg|^2 \nonumber \\
\ll &\,\, K^2 L^2 Q^{-1} + KLQ^{-1} \sum_{\ell\sim L} \sum_{0<|q|\leqslant Q} \big|\mathfrak{S}(q;\ell)\big|,
\end{align}
where
\begin{equation*}
\mathfrak{S}(q;\ell)=\sum_{k\in\mathcal{I}(q;\ell)}\mathbf{e}\big(g(k)\big)
\end{equation*}
with
\begin{equation*}
g(k)=hk^\gamma\big(\ell^\gamma-(\ell+q)^\gamma\big)- m_1 kq.
\end{equation*}
It is easy to see that
\begin{equation*}
g''(k)=\gamma(\gamma-1) hk^{\gamma-2} \big(\ell^\gamma-(\ell+q)^\gamma\big) \asymp |h| K^{\gamma-2} L^{\gamma-1} |q|.
\end{equation*}
By (\ref{2nd-deri-estimate}) of Lemma \ref{derivative-estimate}, we have
\begin{equation}\label{Type-II-inner}
\mathfrak{S}(q;\ell)\ll K\big(|h| K^{\gamma-2} L^{\gamma-1} |q| \big)^{1/2} + \big(|h| K^{\gamma-2}L^{\gamma-1}|q|\big)^{-1/2}.
\end{equation}
Putting (\ref{Type-II-inner}) into (\ref{Weyl-inequality}), we derive that
\begin{align*}
& \,\, \Bigg|\mathop{\sum_{k\sim K} \sum_{\ell\sim L}}_{KL\asymp x} a_k b_\ell \mathbf{e} \big( h (k\ell)^\gamma + m_1 k \ell + m_2 \big)\Bigg|^2
\nonumber \\
\ll & \,\, K^2L^2Q^{-1}+KLQ^{-1}
\nonumber \\
& \,\,\times\sum_{\ell\sim L}\sum_{0<|q|\leqslant Q}
\big(|h|^{1/2}K^{\gamma/2}L^{\gamma/2-1/2}|q|^{1/2}+|h|^{-1/2}K^{1-\gamma/2}L^{1/2-\gamma/2}|q|^{-1/2}\big)
\nonumber \\
\ll & \,\,K^2L^2Q^{-1}+KLQ^{-1}\big(|h|^{1/2}K^{\gamma/2}L^{\gamma/2+1/2}Q^{3/2} \\
&\,\, + |h|^{-1/2}K^{1-\gamma/2}L^{3/2-\gamma/2}Q^{1/2}\big)
\nonumber \\
\ll & \,\, K^2L^2Q^{-1}+|h|^{1/2}K^{1+\gamma/2}L^{\gamma/2+3/2}Q^{1/2}+|h|^{-1/2}K^{2-\gamma/2}L^{5/2-\gamma/2}Q^{-1/2}.
\end{align*}
By noting that $1\leqslant Q\leqslant L$, it follows from Lemma \ref{compare} that there exists an optimal $Q$ such that
\begin{align*}
& \,\, \Bigg|\mathop{\sum_{k\sim K}\sum_{\ell\sim L}}_{KL\asymp x}a_kb_\ell \mathbf{e}\big(h (k\ell)^\gamma + m_1 k \ell + m_2\big)\Bigg|^2
\nonumber \\
\ll & \,\, |h|^{1/2}x^{\gamma/2+3/2}K^{-1/2}+Kx+|h|^{-1/2}x^{2-\gamma/2}+|h|^{1/3}x^{\gamma/3+5/3}K^{-1/3}+K^{-1/2}x^2,
\end{align*}
which implies
\begin{align*}
&\,\, \Bigg|\mathop{\sum_{k\sim K}\sum_{\ell\sim L}}_{KL\asymp x}a_kb_\ell \mathbf{e}\big(h (k\ell)^\gamma + m_1 k \ell + m_2\big)\Bigg| \\
\ll & \,\, |h|^{1/4}x^{\gamma/4+3/4}K^{-1/4}+|h|^{-1/4}x^{1-\gamma/4} + K^{1/2}x^{1/2} \\
&\,\, + |h|^{1/6}x^{\gamma/6+5/6}K^{-1/6} + K^{-1/4}x \\
\ll &\,\, |h|^{1/4} x^{\gamma/4+5/8} + |h|^{-1/4}x^{1-\gamma/4} + x^{22/25} + |h|^{1/6}x^{\gamma/6+3/4},
\end{align*}
provided that $x^{1/2}\ll K\ll x^{19/25}$, which completes the proof of Lemma \ref{Type-II-sum}.
\end{proof}

\begin{lemma}\label{lem:exp}
For real numbers $m_1, m_2$ and $x/2<v\le x$, we have
\begin{align*}
&\,\,\max_{x/2<v\leqslant x} \Bigg| \sum_{x/2< n \leqslant v} \Lambda(n) \mathbf{e}(hn^\gamma+m_1n+m_2)\Bigg| \\
\ll &\,\, x^{\eps} \Big( |h|^{1/6}x^{\gamma/6+3/4} + |h|^{-1/3}x^{1-\gamma/3} + |h|^{1/4} x^{\gamma/4+5/8} \\
&\,\, \qquad + |h|^{-1/4}x^{1-\gamma/4} + x^{22/25} \Big).
\end{align*}	
\end{lemma}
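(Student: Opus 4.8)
The plan is to derive the bound from Heath--Brown's identity together with the Type~I and Type~II estimates of Lemmas~\ref{Type-I-sum} and~\ref{Type-II-sum}. First I would apply Lemma~\ref{Heath-Brown-identity} with $k=3$ and $z=x^{1/3}$, which is legitimate on the range $x/2<n\le v\le x$ since then $n\le x<2x=2z^{3}$. This expresses the sum over $x/2<n\le v$ as a bounded linear combination, for $1\le j\le3$, of sums
\[
\mathop{\sum\cdots\sum}_{\substack{n_1\cdots n_{2j}=n,\ x/2<n\le v\\ n_{j+1},\dots,n_{2j}\le z}}(\log n_1)\,\mu(n_{j+1})\cdots\mu(n_{2j})\,\mathbf{e}\big(hn^\gamma+m_1n+m_2\big).
\]
Splitting every variable $n_i$ into a dyadic block $n_i\sim N_i$ with $\prod_iN_i\asymp x$ costs only a factor $\ll(\log x)^{6}$, and after this I would regroup the variables into two products $k$ and $\ell$ with $KL\asymp x$, reducing matters — up to a factor $x^{\eps}$ absorbing the divisor-type multiplicities, the weight $\log n_1$ (handled by partial summation), and the sharp cutoff $\mathbf 1_{n\le v}$ (harmless because the derivative and Weyl--van der Corput estimates used below apply to sums over subintervals as well) — to bounding $S_I(K,L)$ and $S_{II}(K,L)$ sums with coefficients of size $\ll x^{\eps}$.

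The combinatorial core is to show that for every dyadic configuration one of the following holds. If some block has $N_{i_0}\ge x^{1/2}$, then, since the M\"obius variables all obey $N_i\le z=x^{1/3}<x^{1/2}$, the index $i_0$ is one of $1,\dots,j$; I would set $\ell=n_{i_0}$ and let $k$ be the product of the remaining variables, so that $b_\ell\in\{1,\log\ell\}$ and $K=x/N_{i_0}\ll x^{1/2}$, whence Lemma~\ref{Type-I-sum} gives a contribution $\ll x^{\eps}\big(|h|^{1/6}x^{\gamma/6+3/4}+|h|^{-1/3}x^{1-\gamma/3}\big)$. Otherwise every block satisfies $N_i<x^{1/2}$; multiplying the blocks together one at a time, the running product first exceeds $x^{6/25}$ at a stage where (the total being $\asymp x$, this stage occurs strictly before all blocks are used, so the complement is nonempty) its value lies in $\big(x^{6/25},\,x^{6/25}\cdot x^{1/2}\big)\subseteq\big(x^{6/25},x^{19/25}\big)$, the last inclusion using $\tfrac{6}{25}+\tfrac12\le\tfrac{19}{25}$. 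Taking this partial product and its complement as $\{k,\ell\}$, and relabelling if necessary so that $x^{1/2}\ll K\ll x^{19/25}$, Lemma~\ref{Type-II-sum} gives a contribution $\ll x^{\eps}\big(|h|^{1/4}x^{\gamma/4+5/8}+|h|^{-1/4}x^{1-\gamma/4}+x^{22/25}+|h|^{1/6}x^{\gamma/6+3/4}\big)$. Summing the $O(x^{\eps})$ contributions and taking the maximum over $v$ then yields the claimed estimate.

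The main obstacle is making this dichotomy completely tight: one must be certain that every product decomposition produced by the $k=3$ identity falls into one of the two cases with the split lying inside the admissible window, and the delicate point is the second case — that a greedy product of blocks, each smaller than $x^{1/2}$, can always be halted inside $(x^{6/25},x^{19/25})$. This is exactly why the Type~II range in Lemma~\ref{Type-II-sum} was pushed up to $x^{19/25}$ (equivalently, why the term $x^{22/25}$, coming from $K^{1/2}x^{1/2}$ at $K\asymp x^{19/25}$, is present), and the verification reduces to the single numerical inequality $\tfrac{6}{25}+\tfrac12\le\tfrac{19}{25}$; everything else — the $\log n_1$ factor, the divisor multiplicities, the incomplete cutoff, and the maximum over $v$ — costs only powers of $\log x$ and is swallowed by $x^{\eps}$.
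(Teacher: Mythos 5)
Your proposal is correct and follows essentially the same route as the paper: Heath--Brown's identity with $k=3$, dyadic decomposition, and the dichotomy between a Type~I configuration ($K\ll x^{1/2}$, Lemma~\ref{Type-I-sum}) and a Type~II configuration ($x^{1/2}\ll K\ll x^{19/25}$, Lemma~\ref{Type-II-sum}). The only difference is cosmetic: you obtain the Type~II split by a greedy grouping with relabelling, whereas the paper separates the cases ``some $N_j\in[x^{6/25},x^{1/2})$'' and ``all $N_j<x^{6/25}$'' and then groups the sorted blocks, both resting on the same inequality $\tfrac{6}{25}+\tfrac{6}{25}<\tfrac12$ (equivalently your $\tfrac{6}{25}+\tfrac12\le\tfrac{19}{25}$).
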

\begin{proof}
By Heath--Brown's identity, i.e. Lemma \ref{Heath-Brown-identity}, with $k=3$, one can see that the exponential sum
\begin{equation*}
\max_{x/2<t\leqslant x} \Bigg|\sum_{x/2 < n \leqslant t} \Lambda(n) \mathbf{e}(hn^\gamma + m_1 n + m_2)\Bigg|
\end{equation*}
can be written as linear combination of $O(\log^6x)$ sums, each of which is of the form
\begin{align}\label{single-sum}
\mathcal{T}^* \defeq & \,\, \sum_{n_1\sim N_1}\cdots \sum_{n_6\sim N_6}(\log n_1)\mu(n_4)\mu(n_5)\mu(n_6) \nonumber \\
& \,\, \qquad\qquad\qquad \times \mathbf{e}\big(h(n_1n_2\cdots n_6)^\gamma + (n_1n_2\cdots n_6)m_1 + m_2\big),
\end{align}
where $N_1N_2\cdots N_6\asymp x$; $2N_i\leqslant(2x)^{1/3},i=4,5,6$ and some $n_i$ may only take value $1$.
Therefore, it is sufficient for us to give upper bound estimate for each $\mathcal{T}^*$ defined as in (\ref{single-sum}). Next, we will consider three cases.

\noindent
\textbf{Case 1.} If there exists an $N_j$ such that $N_j\geqslant x^{1/2}$, then we must have $j\leqslant3$ for the fact that
$N_i\ll x^{1/3}$ with $i=4,5,6$. Let
\begin{equation*}
	k=\prod_{\substack{1\leqslant i\leqslant6\\ i\not=j}}n_i,\qquad \ell=n_j,\qquad
	K=\prod_{\substack{1\leqslant i\leqslant6\\ i\not=j}}N_i,\qquad L=N_j.
\end{equation*}
In this case, we can see that $\mathcal{T}^*$ is a sum of ``Type I'' satisfying $K\ll x^{1/2}$. By Lemma \ref{Type-I-sum},
we have
\begin{equation*}
x^{-\varepsilon}\cdot \mathcal{T}^*\ll |h|^{1/6}x^{\gamma/6+3/4} + |h|^{-1/3}x^{1-\gamma/3}.
\end{equation*}

\noindent
\textbf{Case 2.} If there exists an $N_j$ such that $x^{6/25}\leqslant N_j<x^{1/2}$, then we take
\begin{equation*}
	k=\prod_{\substack{1\leqslant i\leqslant6\\ i\not=j}}n_i,\qquad \ell=n_j,\qquad
	K=\prod_{\substack{1\leqslant i\leqslant6\\ i\not=j}}N_i,\qquad L=N_j.
\end{equation*}
Thus, $\mathcal{T}^*$ is a sum of ``Type II'' satisfying $x^{1/2}\ll K\ll x^{19/25}$. By Lemma \ref{Type-II-sum},
we have
\begin{equation*}
	x^{-\varepsilon}\cdot  \mathcal{T}^*\ll |h|^{1/4} x^{\gamma/4+5/8} + |h|^{-1/4}x^{1-\gamma/4} + x^{22/25} + |h|^{1/6}x^{\gamma/6+3/4}.
\end{equation*}

\noindent
\textbf{Case 3.} If $N_j<x^{6/25}\,(j=1,2,3,4,5,6)$, without loss of generality, we assume that
$N_1\geqslant N_2\geqslant\cdots\geqslant N_6$. Let $r$ denote the natural number $j$ such that
\begin{equation*}
	N_1N_2\cdots N_{j-1}<x^{6/25},\qquad N_1N_2\cdots N_j\geqslant x^{6/25}.
\end{equation*}
Since $N_1<x^{6/25}$ and $N_6<x^{6/25}$, then $2\leqslant r\leqslant5$. Thus, we have
\begin{equation*}
	x^{6/25}\leqslant N_1N_2\cdots N_r=(N_1\cdots N_{r-1})\cdot N_r<x^{6/25}\cdot x^{6/25}<x^{1/2}.
\end{equation*}
Let
\begin{equation*}
	k=\prod_{i=r+1}^6n_i,\qquad \ell=\prod_{i=1}^rn_i,\qquad K=\prod_{i=r+1}^6N_i,\qquad L=\prod_{i=1}^rN_i.
\end{equation*}
At this time, $\mathcal{T}^*$ is a sum of ``Type II'' satisfying $x^{1/2}\ll K\ll x^{19/25}$. By Lemma \ref{Type-II-sum},
we have
\begin{equation*}
	x^{-\varepsilon}\cdot \mathcal{T}^*\ll |h|^{1/4} x^{\gamma/4+5/8} + |h|^{-1/4}x^{1-\gamma/4} + x^{22/25} + |h|^{1/6}x^{\gamma/6+3/4}.
\end{equation*}
Combining the above three cases, we derive that
\begin{align*}
	x^{-\varepsilon}\cdot \mathcal{T}^*
	\ll & \,\, |h|^{1/6}x^{\gamma/6+3/4} + |h|^{-1/3}x^{1-\gamma/3} \\
	&\,\, + |h|^{1/4} x^{\gamma/4+5/8} + |h|^{-1/4}x^{1-\gamma/4} + x^{22/25},
\end{align*}
which completes the proof of this lemma.
\end{proof}

\section{Proof of Theorem~\ref{thm:main}}

\subsection{Initial construction and the main term}

We start by two Beatty sequences
$$
\cB_{\alpha_1, \beta_1} \defeq \fl{\alpha_1 n + \beta_1} \mand \cB_{\alpha_2, \beta_2} \defeq \fl{\alpha_2 n + \beta_2}.
$$
We define that $\omega_1 \defeq \alpha_1^{-1}$ and $\omega_2 \defeq \alpha_2^{-1}$. By the definition of $\pi^{(c)}_{\alpha_1,\beta_1; \alpha_2, \beta_2} (x)$, we obtain that
\begin{equation}\label{char-represent}
\pi^{(c)}_{\alpha_1,\beta_1; \alpha_2, \beta_2} (x) = \sum_{p \le x} \cX_{\alpha_1, \beta_1} (p) \cX_{\alpha_2, \beta_2} (p) \cX^{(c)}(p),
\end{equation}
where by Lemma~\ref{lem:Beatty}
$$
\cX_{\alpha_i, \beta_i} (p) \defeq \fl{-\omega_i(p-\beta_i)} - \fl{-\omega_i(p+1-\beta_i)}, \quad (i=1,2),
$$
and by Lemma~\ref{lem:PS}
$$
\cX^{(c)}(p) \defeq \fl{-p^\gamma} - \fl{-(p+1)^\gamma}.
$$
Moreover, we see that
\begin{equation}\label{represent-1}
\cX_{\alpha_i, \beta_i} (p) = \omega_i + \psi(-\omega_i(p+1-\beta_i))-\psi(-\omega_i(p-\beta_i)),\quad (i=1,2),
\end{equation}
and
\begin{equation}\label{represent-2}
\cX^{(c)}(p) = \gamma p^{\gamma-1}+O(p^{\gamma-2})+\psi(-(p+1)^\gamma)-\psi(-p^\gamma).
\end{equation}
Combining (\ref{char-represent}), (\ref{represent-1}) and (\ref{represent-2}), we obtain
$$
\pi^{(c)}_{\alpha_1,\beta_1; \alpha_2, \beta_2} (x) = \cS_1 + \cS_2 + \cS_3 + \cS_4 + \cS_5 + \cS_6 + \cS_7,
$$
where
\begin{align*}
\cS_1 &\defeq \sum_{p \le x} \omega_1 \omega_2 \cX^{(c)}(p), \\
\cS_2 &\defeq \sum_{p \le x} \omega_1 \big(  \gamma p^{\gamma-1}+O(p^{\gamma-2}) \big) \\
&\qquad \times \big( \psi(-\omega_2(p+1-\beta_2))-\psi(-\omega_2(p-\beta_2)) \big), \\
\cS_3 &\defeq \sum_{p \le x} \omega_2 \big(  \gamma p^{\gamma-1}+O(p^{\gamma-2}) \big) \\
&\qquad \times \big( \psi(-\omega_1(p+1-\beta_1))-\psi(-\omega_1(p-\beta_1)) \big),	\\
\cS_4 &\defeq \sum_{p \le x} \big(  \gamma p^{\gamma-1}+O(p^{\gamma-2}) \big) \big( \psi(-\omega_2(p+1-\beta_2))-\psi(-\omega_2(p-\beta_2)) \big) \\
&\qquad \times \big( \psi(-\omega_1(p+1-\beta_1))-\psi(-\omega_1(p-\beta_1)) \big), \\
\cS_5 &\defeq \sum_{p \le x} \omega_1 \big( \psi(-(p+1)^\gamma)-\psi(-p^\gamma) \big) \\
&\qquad \times \big( \psi(-\omega_2(p+1-\beta_2))-\psi(-\omega_2(p-\beta_2)) \big), \\
\cS_6 &\defeq \sum_{p \le x} \omega_2 \big(  \psi(-(p+1)^\gamma)-\psi(-p^\gamma) \big) \\
&\qquad \times \big( \psi(-\omega_1(p+1-\beta_1))-\psi(-\omega_1(p-\beta_1)) \big),	\\
\cS_7 \displaystyle &\defeq \sum_{p \le x} \big( \psi(-(p+1)^\gamma)-\psi(-p^\gamma) \big)
\big( \psi(-\omega_2(p+1-\beta_2))-\psi(-\omega_2(p-\beta_2)) \big) \\
&\qquad \times \big( \psi(-\omega_1(p+1-\beta_1))-\psi(-\omega_1(p-\beta_1)) \big).
\end{align*}
By~\eqref{eq:PSthm}, we derive an asymptotic formula for $\cS_1$ with $c\in(1,\frac{2817}{2426})$, which is
$$
\cS_1 = \frac{x^\gamma}{\alpha_1 \alpha_2 \log x}+O\bigg(\frac{x^\gamma}{\log^2 x}\bigg).
$$
Next, for $i = 2, 3, 4, 5, 6, 7$, we shall prove that
$$
\cS_i \ll x^\gamma \log^{-2} x.
$$
Applying the Vaaler's approximation, i.e. Lemma~\ref{lem:Vaaler}, and taking
$$
H_1 = H_2 \defeq x^{\eps} \mand H_3 \defeq x^{1-\gamma+\eps}
$$
with a sufficiently small positive number $\eps$, we have that
\begin{align}\label{eq:eta1}
&\qquad \psi(-\omega_1(p+1-\beta_1))-\psi(-\omega_1(p-\beta_1)) \nonumber \\
&= \sum_{0 < |h_1| \le H_1} a_{h_1} \big( \e(\omega_1 h_1 (p+1-\beta_1)) - \e(\omega_1 h_1 (p - \beta_1)) \big) \nonumber \\
&\qquad + O\( \sum_{|h_1| \le H_1} b_{h_1} \big( \e(\omega_1 h_1 (p+1-\beta_1)) + \e(\omega_1 h_1 (p - \beta_1)) \big) \),
\end{align}
and
\begin{align}\label{eq:eta2}
&\qquad \psi(-\omega_2(p+1-\beta_2))-\psi(-\omega_2(p-\beta_2)) \nonumber \\
&= \sum_{0 < |h_2| \le H_2} a_{h_2} \big( \e(\omega_2 h_2 (p+1-\beta_2)) - \e(\omega_2 h_2 (p - \beta_2)) \big) \nonumber \\
&\qquad + O\( \sum_{|h_2| \le H_2} b_{h_2} \big( \e(\omega_2 h_2 (p+1-\beta_2)) + \e(\omega_2 h_2 (p - \beta_2)) \big) \),
\end{align}
and
\begin{align}\label{eq:gamma}
&\qquad \psi(-(p+1)^\gamma)-\psi(-p^\gamma) \nonumber \\
&= \sum_{0 < |h_3| \le H_3} a_{h_3} \big( \e(h_3 (p+1)^\gamma) - \e(h_3 p^\gamma ) \big) \nonumber \\
&\qquad + O\( \sum_{|h_3| \le H_3} b_{h_3} \big( \e(h_3 (p+1)^\gamma) + \e(h_3 p^\gamma ) \big) \).
\end{align}
We mention that for $j=1,2,3$, there holds
$$
a_{h_j} \ll |h_j|^{-1} \mand b_{h_j} \ll H_j^{-1}.
$$

\subsection{Upper bounds of $\cS_2$ and $\cS_3$}
In order to prove that $\cS_2 \ll x^\gamma \log^{-2} x$, we write $\cS_2 = \cS_{21} + O(\cS_{22})$, where
$$
\cS_{21} \defeq \sum_{p \le x} \omega_1 \gamma p^{\gamma-1}  \big( \psi(-\omega_2(p+1-\beta_2))-\psi(-\omega_2(p-\beta_2)) \big)
$$
and
$$
\cS_{22} \defeq \sum_{p \le x} \omega_1 p^{\gamma-2}  \big( \psi(-\omega_2(p+1-\beta_2))-\psi(-\omega_2(p-\beta_2)) \big).
$$
By~\eqref{eq:eta2}, we obtain that $\cS_{21} = \cS_{23} + O(\cS_{24})$, where
$$
\cS_{23} \defeq \sum_{p \le x} \omega_1 \gamma p^{\gamma-1} \sum_{0 < |h_2| \le H_2} a_{h_2} \big( \e(\omega_2 h_2 (p+1-\beta_2)) - \e(\omega_2 h_2 (p - \beta_2)) \big)
$$
and
$$
\cS_{24} \defeq \sum_{p \le x} \omega_1 \gamma p^{\gamma-1} \sum_{|h_2| \le H_2} b_{h_2} \big( \e(\omega_2 h_2 (p+1-\beta_2)) + \e(\omega_2 h_2 (p - \beta_2)) \big).
$$
\subsubsection{Estimation of $\cS_{23}$}
By Lemma~\ref{lem:index} and a splitting argument, it suffices to prove that
\begin{align}
\label{eq:S23}
&\sum_{x/2< n \le x} \sum_{0 < |h_2| \le H_2} a_{h_2} n^{\gamma-1} \Lambda(n) \nonumber \\
&\qquad \times \big( \e(\omega_2 h_2 (n+1-\beta_2)) - \e(\omega_2 h_2 (n - \beta_2)) \big) \ll x^{\gamma - \eps}.
\end{align}
 Let
\begin{equation}
\label{eq:theta2}
\theta_{h_2} \defeq \e(\omega_2 h_2) - 1.
\end{equation}
It follows from partial summation and the trivial estimate  $\theta_{h_2}\ll1$ that the left--hand side of \eqref{eq:S23} is
\begin{align*}
     &\,\, \sum_{0 < |h_2| \le H_2} a_{h_2} \sum_{x/2 < n \le x} n^{\gamma-1}
           \Lambda(n) \theta_{h_2} \e(\omega_2 h_2 (n - \beta_2)) \\
\ll  &\,\, x^{\gamma-1} \max_{x/2 < u \le x} \sum_{0 < h_2 \le H_2} h_2^{-1} \bigg|
           \sum_{x/2< n \le u} \Lambda(n) \e(\omega_2 h_2 n) \bigg|.
\end{align*}
 Hence it is sufficient to prove that
\begin{equation}
\label{eq:S23c}
 \max_{x/2 < u \le x} \sum_{0 < h_2 \le H_2} h_2^{-1} \bigg| \sum_{x/2< n \le u} \Lambda(n) \e(\omega_2 h_2 n) \bigg|
  \ll x^{1-\varepsilon}.
\end{equation}
By Lemma~\ref{lem:ttr}, the left--hand side of \eqref{eq:S23c} is
\begin{align*}
\ll & \,\, \sum_{0 < h_2 \le H_2}h_2^{-1} \Big(h_2^{1/2} x^{1-1/(2\tau) + \varepsilon}+x^{1-\varepsilon}\Big)
                \nonumber \\
\ll & \,\, \sum_{0<h_2\leqslant H_2}h_2^{-1/2}x^{1-1/(2\tau)+\varepsilon}+x^{1-\varepsilon}
            \ll x^{1-1/(2\tau)+\varepsilon}+x^{1-\varepsilon}\ll x^{1-\varepsilon}.
\end{align*}

\subsubsection{Estimation of $\cS_{24}$ and conclusions}

The contribution of $\cS_{24}$ from $h_2=0$ is
$$
\ll \sum_{p \le x} p^{\gamma-1} H_2^{-1} \ll x^{\gamma - \eps}.
$$
Hence we need the estimate that the contribution from $h_2 \neq 0$ is $\ll x^{\gamma-\varepsilon}$. Then it is sufficient to prove that
\begin{align}
\label{eq:S24}
&\sum_{x/2 < n \le x} \sum_{0 < |h_2| \le H_2} b_{h_2} n^{\gamma-1} \Lambda(n) \nonumber \\
&\qquad \times \big( \e(\omega_2 h_2 (n+1-\beta_2)) + \e(\omega_2 h_2 (n - \beta_2)) \big) \ll x^{\gamma - \eps},
\end{align}
which can be proved by the same arguments that lead to \eqref{eq:S23}. Hence we prove that $\cS_{21} \ll x^{\gamma-\eps}$. Since $\cS_{22}$ can be bounded by the same method, the estimation of $\cS_2$ is done. Moreover, the bound of $\cS_3$ can be estimated by the same proof.

\subsection{Upper bound of $\cS_4$}
In order to prove that $\cS_4 \ll x^\gamma \log^{-2} x$, we write $\cS_4 = \cS_{41} + O(\cS_{42})$, where
\begin{align*}
\cS_{41} &\defeq \sum_{p \le x} \gamma p^{\gamma-1} \big( \psi(-\omega_2(p+1-\beta_2))-\psi(-\omega_2(p-\beta_2)) \big) \\
&\qquad \times \big( \psi(-\omega_1(p+1-\beta_1))-\psi(-\omega_1(p-\beta_1)) \big),
\end{align*}
and
\begin{align*}
\cS_{42} &\defeq \sum_{p \le x} p^{\gamma-2} \big( \psi(-\omega_2(p+1-\beta_2))-\psi(-\omega_2(p-\beta_2)) \big) \\
&\qquad \times \big( \psi(-\omega_1(p+1-\beta_1))-\psi(-\omega_1(p-\beta_1)) \big).
\end{align*}
By~\eqref{eq:eta1} and~\eqref{eq:eta2}, we obtain that
$$
\cS_{41} = \cS_{43} + O(\cS_{44} + \cS_{45} + \cS_{46}),
$$
where
\begin{align*}
\cS_{43} &\defeq \sum_{p \le x} \gamma p^{\gamma-1} \sum_{0 < |h_1| \le H_1} a_{h_1} \big( \e(\omega_1 h_1 (p+1-\beta_1)) - \e(\omega_1 h_1 (p - \beta_1)) \big) \\
&\qquad \times \sum_{0 < |h_2| \le H_2} a_{h_2} \big( \e(\omega_2 h_2 (p+1-\beta_2)) - \e(\omega_2 h_2 (p - \beta_2)) \big), \\
\cS_{44} &\defeq \sum_{p \le x} \gamma p^{\gamma-1} \sum_{0 < |h_1| \le H_1} a_{h_1} \big( \e(\omega_1 h_1 (p+1-\beta_1)) - \e(\omega_1 h_1 (p - \beta_1)) \big) \\
&\qquad \times \sum_{|h_2| \le H_2} b_{h_2} \big( \e(\omega_2 h_2 (p+1-\beta_2)) + \e(\omega_2 h_2 (p - \beta_2)) \big), \\
\cS_{45} &\defeq \sum_{p \le x} \gamma p^{\gamma-1} \sum_{|h_1| \le H_1} b_{h_1} \big( \e(\omega_1 h_1 (p+1-\beta_1)) + \e(\omega_1 h_1 (p - \beta_1)) \big) \\
&\qquad \times \sum_{0 < |h_2| \le H_2} a_{h_2} \big( \e(\omega_2 h_2 (p+1-\beta_2)) - \e(\omega_2 h_2 (p - \beta_2)) \big), \\
\cS_{46} &\defeq \sum_{p \le x} \gamma p^{\gamma-1} \sum_{|h_1| \le H_1} b_{h_1} \big( \e(\omega_1 h_1 (p+1-\beta_1)) + \e(\omega_1 h_1 (p - \beta_1)) \big) \\
&\qquad \times \sum_{|h_2| \le H_2} b_{h_2} \big( \e(\omega_2 h_2 (p+1-\beta_2)) + \e(\omega_2 h_2 (p - \beta_2)) \big).
\end{align*}
\subsubsection{Estimation of $\cS_{43}$}
By Lemma~\ref{lem:index} and a splitting argument, it is sufficient to prove that
\begin{align}
\label{eq:S43}
&\sum_{x/2< n \le x} \sum_{0 < |h_1| \le H_1} \sum_{0 < |h_2| \le H_2} a_{h_1} a_{h_2} n^{\gamma-1} \Lambda(n) \nonumber \\
&\qquad \times \big( \e(\omega_1 h_1 (n+1-\beta_1)) - \e(\omega_1 h_1 (n - \beta_1)) \big) \nonumber \\
&\qquad \times \big( \e(\omega_2 h_2 (n+1-\beta_2)) - \e(\omega_2 h_2 (n - \beta_2)) \big) \ll x^{\gamma - \eps}.
\end{align}
Define
\begin{equation}
\label{eq:theta1}
\theta_{h_1} \defeq \e(\omega_1 h_1) - 1.
\end{equation}
Then by~\eqref{eq:theta2}, \eqref{eq:theta1} and the trivial estimate $\theta_{h_1}\ll1$ and $\theta_{h_2}\ll1$, we know that    the left--hand side of \eqref{eq:S43} is
\begin{align*}
&\quad \sum_{0 < |h_1| \le H_1} \sum_{0 < |h_2| \le H_2} a_{h_1} a_{h_2} \sum_{x/2< n \le x} n^{\gamma-1} \Lambda(n) \\
&\qquad\qquad\qquad\qquad\quad \times \theta_{h_1} \e(\omega_1 h_1 (n - \beta_1))
       \theta_{h_2} \e(\omega_2 h_2 (n - \beta_2)) \\
&\ll x^{\gamma-1} \max_{x/2<u\leqslant x} \sum_{0 < |h_1| \le H_1} |h_1|^{-1} \sum_{0 < |h_2| \le H_2} |h_2|^{-1}
\bigg|\sum_{x/2<n\leqslant u} \Lambda(n) \e((\omega_1 h_1 + \omega_2 h_2) n) \bigg|,
\end{align*}
which combined with Lemma \ref{lem:ttr}, Lemma \ref{lem:finite-type} and the fact that $1, \omega_1, \omega_2$ are linearly independent over $\mathbb{Q}$, yields the upper bound estimate of the left--hand side of \eqref{eq:S43}
\begin{align*}
&\ll x^{\gamma-1} \sum_{0 < h_1 \le H_1} \sum_{0 < h_2 \le H_2} h_1^{-1} h_2^{-1} x^{1-\varepsilon}  \ll x^{\gamma - 1} x^{1-\varepsilon}\ll x^{\gamma-\varepsilon}.
\end{align*}
\subsubsection{Estimations of $\cS_{44}$ and $\cS_{45}$}
We work on $\cS_{44}$ firstly. The contribution of $\cS_{44}$ from $h_2 = 0$ is
\begin{align*}
&\ll \sum_{p \le x} p^{\gamma-1} H_2^{-1} \sum_{0 < |h_1| \le H_1} a_{h_1} \big( \e(\omega_1 h_1 (p+1-\beta_1)) - \e(\omega_1 h_1 (p - \beta_1)) \big) \\
&\ll H_2^{-1} \cS_{23} \ll x^{\gamma-\eps}.
\end{align*}
The contribution of $\cS_{44}$ from $h_2 \neq 0$ is
\begin{align}\label{S_44-n_2not=0}
&\sum_{p \le x} \gamma p^{\gamma-1} \sum_{0 < |h_1| \le H_1} a_{h_1} \big( \e(\omega_1 h_1 (p+1-\beta_1)) - \e(\omega_1 h_1 (p - \beta_1)) \big) \nonumber \\
&\qquad\times \sum_{0 < |h_2| \le H_2} b_{h_2} \big( \e(\omega_2 h_2 (p+1-\beta_2)) + \e(\omega_2 h_2 (p - \beta_2)) \big).	
\end{align}
From Lemma \ref{lem:index}, it is sufficient to show that, for $x/2<u\leqslant x$, there holds
\begin{align}\label{eq:S44}
&\sum_{x/2<n\le u} n^{\gamma-1} \Lambda(n) \sum_{0 < |h_1| \le H_1} a_{h_1} \big( \e(\omega_1 h_1 (n+1-\beta_1)) - \e(\omega_1 h_1 (n - \beta_1)) \big) \nonumber \\
&\qquad \times \sum_{0 < |h_2| \le H_2} b_{h_2} \big( \e(\omega_2 h_2 (n+1-\beta_2)) + \e(\omega_2 h_2 (n - \beta_2)) \big) \ll x^{\gamma-\eps}.	
\end{align}
By the same method of bounding $\cS_{43}$, the left--hand side of~\eqref{eq:S44} is
\begin{align*}
&\ll x^{\gamma-1} \max_{x/2<u\leqslant x} \sum_{0 < |h_1| \le H_1} |h_1|^{-1} \sum_{0 < |h_2| \le H_2} H_2^{-1}
\bigg| \sum_{x/2<n\le u} \Lambda(n) \e((\omega_1 h_1 + \omega_2 h_2) n) \bigg| \\
&\ll x^{\gamma - 1}\cdot\big(x^{1-1/(2\tau)+\varepsilon}+x^{1-\varepsilon}\big)\ll x^{\gamma-\varepsilon}.
\end{align*}
Similarly, we can prove that $\cS_{45} \ll x^{\gamma-\eps}$.
\subsubsection{Estimation of $\cS_{46}$}
The contribution from $h_1 = h_2 = 0$ is
$$
\ll \sum_{p \le x} p^{\gamma-1} H_1^{-1} H_2^{-1} \ll x^{\gamma - \eps}.
$$
The contribution from $h_1 = 0$ and $h_2 \neq 0$ is
\begin{align*}
&\ll \sum_{p \le x} p^{\gamma-1} H_1^{-1} \sum_{|h_2| \le H_2} b_{h_2} \big( \e(\omega_2 h_2 (p+1-\beta_2)) + \e(\omega_2 h_2 (p - \beta_2)) \big) \\
&\ll H_1^{-1} \cS_{24} \ll x^{\gamma-\eps}.
\end{align*}
The contribution from $h_1 \neq 0$ and $h_2 = 0$ can be bounded by a similar method. In the end, the contribution from $h_1 \neq 0$ and $h_2 \neq 0$ is
\begin{align*}
&\sum_{p \le x} \gamma p^{\gamma-1} \sum_{0 < |h_1| \le H_1} b_{h_1} \big( \e(\omega_1 h_1 (p+1-\beta_1)) + \e(\omega_1 h_1 (p - \beta_1)) \big) \\
&\qquad \times \sum_{0 < |h_2| \le H_2} b_{h_2} \big( \e(\omega_2 h_2 (p+1-\beta_2)) + \e(\omega_2 h_2 (p - \beta_2)) \big),	
\end{align*}
which can be bounded by using the same method of \eqref{S_44-n_2not=0}.

\subsection{Upper bounds of $\cS_5$ and $\cS_6$}
We only give the details of the estimation of $\cS_5$, since the estimate of $\cS_6$ is exactly the same as that of $\cS_5$, and we omit it herein. By~\eqref{eq:eta2} and~\eqref{eq:gamma}, it is easy to see that
$$
\cS_5 = \cS_{51} + O(\cS_{52}+ \cS_{53} + \cS_{54}),
$$
where
\begin{align*}
\cS_{51} &\defeq \sum_{p \le x} \omega_1 \sum_{0 < |h_3| \le H_3} a_{h_3} \big( \e(h_3 (p+1)^\gamma) - \e(h_3 p^\gamma ) \big) \\
&\qquad \times \sum_{0 < |h_2| \le H_2} a_{h_2} \big( \e(\omega_2 h_2 (p+1-\beta_2)) - \e(\omega_2 h_2 (p - \beta_2)) \big), \\
\cS_{52} &\defeq \sum_{p \le x} \omega_1 \sum_{0 < |h_3| \le H_3} a_{h_3} \big( \e(h_3 (p+1)^\gamma) - \e(h_3 p^\gamma ) \big) \\
&\qquad \times \sum_{|h_2| \le H_2} b_{h_2} \big( \e(\omega_2 h_2 (p+1-\beta_2)) + \e(\omega_2 h_2 (p - \beta_2)) \big), \\
\cS_{53} &\defeq \sum_{p \le x} \omega_1 \sum_{|h_3| \le H_3} b_{h_3} \big( \e(h_3 (p+1)^\gamma) + \e(h_3 p^\gamma ) \big) \\
&\qquad \times \sum_{0 < |h_2| \le H_2} a_{h_2} \big( \e(\omega_2 h_2 (p+1-\beta_2)) - \e(\omega_2 h_2 (p - \beta_2)) \big), \\
\cS_{54} &\defeq \sum_{p \le x} \omega_1 \sum_{|h_3| \le H_3} b_{h_3} \big( \e(h_3 (p+1)^\gamma) + \e(h_3 p^\gamma ) \big) \\
&\qquad \times \sum_{|h_2| \le H_2} b_{h_2} \big( \e(\omega_2 h_2 (p+1-\beta_2)) + \e(\omega_2 h_2 (p - \beta_2)) \big).
\end{align*}
\subsubsection{Estimation of $\cS_{51}$}
In order to show that $\cS_{51} \ll x^{\gamma-\eps}$, by Lemma~\ref{lem:index} and a splitting argument,
it suffices to prove that
\begin{align}
\label{eq:S51}
&\sum_{x/2< n \le x} \Lambda(n) \sum_{0 < |h_3| \le H_3} a_{h_3} \big( \e(h_3 (n+1)^\gamma) - \e(h_3 n^\gamma ) \big) \nonumber \\
&\qquad \times \sum_{0 < |h_2| \le H_2} a_{h_2} \big( \e(\omega_2 h_2 (n+1-\beta_2)) - \e(\omega_2 h_2 (n - \beta_2)) \big) \ll x^{\gamma - \eps}.
\end{align}
Define
\begin{equation*}
\phi_{h_3}(t):=\e\big(h_3((t+1)^\gamma-t^\gamma)\big)-1.
\end{equation*}
Then we have
\begin{equation*}
\phi_{h_3}(t)\ll |h_3| t^{\gamma -1} \mand \frac{\partial\phi_{h_3}(t)}{\partial t}\ll|h_3|t^{\gamma-2}.
\end{equation*}
It follows from the above estimate, \eqref{eq:theta2}, lemma \ref{lem:exp} and partial summation that the left--hand side of \eqref{eq:S51} is
\begin{align*}
\ll & \,\, \sum_{0<|h_3|\leqslant H_3} \frac{1}{|h_3|}\Bigg|\sum_{x/2<n\leqslant x}\Lambda(n)\phi_{h_3}(n)\e(h_3n^\gamma)
                  \nonumber \\
    & \,\, \qquad \qquad\times \sum_{0<|h_2|\leqslant H_2}a_{h_2}\big(\e(\omega_2 h_2 (n+1-\beta_2))-
            \e(\omega_2 h_2 (n - \beta_2)) \big)\Bigg|
                  \nonumber \\
\ll & \,\, \sum_{0<|h_3|\leqslant H_3} \frac{1}{|h_3|}\Bigg|\int_{\frac{x}{2}}^x\phi_{h_3}(t)
           \mathrm{d}\Bigg(\sum_{x/2<n\leqslant t}\Lambda(n)\e(h_3n^\gamma)
                  \nonumber \\
   & \,\,\qquad \qquad\times \sum_{0<|h_2|\leqslant H_2}a_{h_2}\big(\e(\omega_2 h_2 (n+1-\beta_2))-\e(\omega_2h_2(n-\beta_2))\big)\Bigg)\Bigg|
                  \nonumber \\
\ll & \,\, \sum_{0<|h_3|\leqslant H_3} \frac{1}{|h_3|}\Big|\phi_{h_3}(x)\Big|\Bigg|\sum_{x/2<n\leqslant x}
           \Lambda(n)\e(h_3n^\gamma)
                  \nonumber \\
    & \,\,\qquad \qquad\times \sum_{0<|h_2|\leqslant H_2}
                  a_{h_2}\big(\e(\omega_2h_2(n+1-\beta_2))-\e(\omega_2h_2(n-\beta_2))\big)\Bigg|
                  \nonumber \\
    & \,\, +\int_{\frac{x}{2}}^x\sum_{0<|h_3|\leqslant H_3} \frac{1}{|h_3|}\Bigg|\frac{\partial\phi_{h_3}(t)}{\partial t}\Bigg|
           \Bigg|\sum_{x/2<n\leqslant t}\Lambda(n)\e(h_3n^\gamma)
                  \nonumber \\
   & \,\, \qquad \qquad\times\sum_{0<|h_2|\leqslant H_2}
                 a_{h_2}\big(\e(\omega_2h_2(n+1-\beta_2))-\e(\omega_2h_2(n-\beta_2))\big)
                \Bigg|\mathrm{d}t
                  \nonumber \\
\ll & \,\, x^{\gamma-1}\cdot\max_{x/2<t\leqslant x}\sum_{0<|h_3|\leqslant H_3}\Bigg|\sum_{x/2<n\leqslant t}
           \Lambda(n)\e(h_3n^\gamma)
                  \nonumber \\
    & \,\,\qquad \qquad\times \sum_{0<|h_2|\leqslant H_2}
                 a_{h_2}\big(\e(\omega_2h_2(n+1-\beta_2))-\e(\omega_2h_2(n-\beta_2))\big)\Bigg|
                  \nonumber \\
 = & \,\,  x^{\gamma-1}\cdot\max_{x/2<t\leqslant x}\sum_{0<|h_3|\leqslant H_3}\Bigg|
           \sum_{0<|h_2|\leqslant H_2}a_{h_2}\theta_{h_2}
                  \nonumber \\
  & \,\,\qquad \qquad \times \sum_{x/2<n\leqslant t}\Lambda(n)\e(h_3n^\gamma+\omega_2h_2n-\omega_2h_2\beta_2)\Bigg|
                  \nonumber \\
 \ll & \,\,  x^{\gamma-1}\cdot\max_{x/2<t\leqslant x}\sum_{0<|h_2|\leqslant H_2}\frac{1}{|h_2|}\sum_{0<|h_3|\leqslant H_3}
             \Bigg|\sum_{x/2<n\leqslant t}\Lambda(n)\e(h_3n^\gamma+\omega_2h_2n-\omega_2h_2\beta_2)\Bigg|
                  \nonumber \\
\ll & \,\,x^{\gamma-1}\sum_{0<|h_2|\leqslant H_2}\frac{1}{|h_2|}\sum_{0<|h_3|\leqslant H_3}\max_{x/2<t\leqslant x}
             \Bigg|\sum_{x/2<n\leqslant t}\Lambda(n)\e(h_3n^\gamma+\omega_2h_2n-\omega_2h_2\beta_2)\Bigg|
                  \nonumber \\
\ll & \,\,x^{\gamma-1+\varepsilon}\sum_{0<|h_2|\leqslant H_2}\frac{1}{|h_2|}\sum_{0<|h_3|\leqslant H_3}
          \Big(|h_3|^{1/6}x^{\gamma/6+3/4}+|h_3|^{-1/3}x^{1-\gamma/3} \\
    &\qquad\qquad  + |h_3|^{1/4} x^{\gamma/4+5/8} + |h_3|^{-1/4}x^{1-\gamma/4} + x^{22/25} \Big)
                 \nonumber \\
\ll & \,\, x^{\gamma-1+\varepsilon}\Big(H_3^{7/6}x^{\gamma/6+3/4}+H_3^{2/3}x^{1-\gamma/3}+H_3^{5/4}x^{\gamma/4+5/8} +
           H_3^{3/4}x^{1-\gamma/4} + H_3 x^{22/25} \Big)
                 \nonumber  \\
\ll & \,\, x^{11/12+\varepsilon}+x^{2/3+\varepsilon}+x^{7/8+\varepsilon}+x^{3/4+\varepsilon}+x^{22/25+\varepsilon}
           \ll x^{\gamma-\varepsilon},
\end{align*}
provided that $\gamma > \frac{11}{12}$.

\subsubsection{Estimations of $\cS_{52}$ and $\cS_{53}$}
We only give the proof of $\cS_{52}$, since the bound of $\cS_{53}$ can be obtained similarly. We mention that by assuming $\gamma > \frac{11}{12}$, there holds
\begin{equation}\label{eq:GKa}
\sum_{p\le x}\sum_{0<|h_3|\le H_3}a_{h_3}\big(\e(h_3(p+1)^\gamma)-\e(h_3p^\gamma)\big)\ll x^{\gamma}\log^{-2} x,
\end{equation}
by a standard proof of the Piatetski--Shapiro counting function. A detailed proof can be found in~\cite[pp. 49--53]{GraKol}. By~\eqref{eq:GKa} the contribution of $\cS_{52}$ from $h_2 = 0$ is
\begin{align*}
&\ll \sum_{p \le x} H_2^{-1} \sum_{0 < |h_3| \le H_3} a_{h_3} \big( \e(h_3 (p+1)^\gamma) - \e(h_3 p^\gamma ) \big) \\
&\ll H_2^{-1} x^\gamma \log^{-2} x \ll x^{\gamma - \eps}.
\end{align*}
The contribution of $\cS_{52}$ from $h_2 \neq 0$ is
\begin{align*}
&\sum_{p \le x} \sum_{0 < |h_3| \le H_3} a_{h_3} \big( \e(h_3 (p+1)^\gamma) - \e(h_3 p^\gamma ) \big) \\
&\qquad \times \sum_{0 < |h_2| \le H_2} b_{h_2} \big( \e(\omega_2 h_2 (p+1-\beta_2)) + \e(\omega_2 h_2 (p - \beta_2)) \big),
\end{align*}
which can be bounded by the same method of~\eqref{eq:S51}.
\subsubsection{Estimation of $\cS_{54}$}
The contribution of $\cS_{54}$ from $h_2 = h_3 = 0$ is
$$
\ll\sum_{p\le x}H_3^{-1}H_2^{-1}\ll \frac{x}{\log x}\cdot x^{-(1-\gamma+\varepsilon)}\cdot x^{-\varepsilon}
                \ll x^{\gamma-\varepsilon}.
$$
We mention that
\begin{equation}\label{eq:GKb}
\sum_{p \le x} \sum_{0 < |h_3| \le H_3} b_{h_3} \big( \e(h_3 (p+1)^\gamma) + \e(h_3 p^\gamma ) \big) \ll x^{\gamma} \log^{-2} x,
\end{equation}
by a standard method of exponent pair; see~\cite[pp. 48]{GraKol}. By~\eqref{eq:GKb} the contribution of $\cS_{54}$ from $h_2 = 0$ and $h_3 \neq 0$ is
\begin{align*}
&\ll \sum_{p \le x} H_2^{-1} \sum_{0 < |h_3| \le H_3} b_{h_3} \big( \e(h_3 (p+1)^\gamma) + \e(h_3 p^\gamma ) \big) \\
&\ll H_2^{-1} x^\gamma \log^{-2} x \ll x^{\gamma - \eps}.
\end{align*}
Similarly, the contribution of $\cS_{54}$ from $h_2 \neq 0$ and $h_3 = 0$ is
\begin{align*}
&\ll \sum_{p \le x} H_3^{-1} \sum_{|h_2| \le H_2} b_{h_2} \big( \e(\omega_2 h_2 (p+1-\beta_2)) + \e(\omega_2 h_2 (p - \beta_2)) \big) \\
&\ll H_3^{-1}\sum_{p\leqslant x}1 \ll x^{\gamma-1-\varepsilon}\cdot\frac{x}{\log x}\ll x^{\gamma-\varepsilon}.
\end{align*}
The contribution of $\cS_{54}$ from $h_2 \neq 0$ and $h_3 \neq 0$ is
\begin{align*}
\ll &\,\, \sum_{n \le x}\Lambda(n)\sum_{0<|h_3|\le H_3} b_{h_3} \big( \e(h_3 (n+1)^\gamma) + \e(h_3n^\gamma )\big) \\
&\qquad \times \sum_{0<|h_2|\leqslant H_2}b_{h_2}\big(\e(\omega_2h_2(n+1-\beta_2))+\e(\omega_2h_2(n-\beta_2))\big),
\end{align*}
which can be estimated by the method demonstrated on page 48 in \cite{GraKol} to derive the
upper bound $x^{\gamma-\varepsilon}$.

\subsection{Upper bound of $\cS_7$}
By \eqref{eq:eta1}, \eqref{eq:eta2} and \eqref{eq:gamma}, we write
$$
\cS_7 = \cS_{71} + O(\cS_{72} + \cS_{73} + \cS_{74} + \cS_{75} + \cS_{76} + \cS_{77} + \cS_{78}),
$$
where
\begin{align*}
\cS_{71} &\defeq \sum_{p \le x} \sum_{0 < |h_1| \le H_1} a_{h_1} \big( \e(\omega_1 h_1 (p+1-\beta_1)) - \e(\omega_1 h_1 (p - \beta_1)) \big) \\
&\qquad \times \sum_{0 < |h_2| \le H_2} a_{h_2} \big( \e(\omega_2 h_2 (p+1-\beta_2)) - \e(\omega_2 h_2 (p - \beta_2)) \big) \\
&\qquad \times \sum_{0 < |h_3| \le H_3} a_{h_3} \big( \e(h_3 (p+1)^\gamma) - \e(h_3 p^\gamma ) \big), \\
\cS_{72} &\defeq \sum_{p \le x} \sum_{0 < |h_1| \le H_1} a_{h_1} \big( \e(\omega_1 h_1 (p+1-\beta_1)) - \e(\omega_1 h_1 (p - \beta_1)) \big) \\
&\qquad \times \sum_{0 < |h_2| \le H_2} a_{h_2} \big( \e(\omega_2 h_2 (p+1-\beta_2)) - \e(\omega_2 h_2 (p - \beta_2)) \big) \\
&\qquad \times \sum_{|h_3| \le H_3} b_{h_3} \big( \e(h_3 (p+1)^\gamma) + \e(h_3 p^\gamma ) \big), \\
\cS_{73} &\defeq \sum_{p \le x} \sum_{0 < |h_1| \le H_1} a_{h_1} \big( \e(\omega_1 h_1 (p+1-\beta_1)) - \e(\omega_1 h_1 (p - \beta_1)) \big) \\
&\qquad \times \sum_{|h_2| \le H_2} b_{h_2} \big( \e(\omega_2 h_2 (p+1-\beta_2)) + \e(\omega_2 h_2 (p - \beta_2)) \big) \\
&\qquad \times \sum_{0 < |h_3| \le H_3} a_{h_3} \big( \e(h_3 (p+1)^\gamma) - \e(h_3 p^\gamma ) \big), \\
\cS_{74} &\defeq \sum_{p \le x} \sum_{0 < |h_1| \le H_1} a_{h_1} \big( \e(\omega_1 h_1 (p+1-\beta_1)) - \e(\omega_1 h_1 (p - \beta_1)) \big) \\
&\qquad \times \sum_{|h_2| \le H_2} b_{h_2} \big( \e(\omega_2 h_2 (p+1-\beta_2)) + \e(\omega_2 h_2 (p - \beta_2)) \big) \\
&\qquad \times \sum_{|h_3| \le H_3} b_{h_3} \big( \e(h_3 (p+1)^\gamma) + \e(h_3 p^\gamma ) \big), \\
\cS_{75} &\defeq \sum_{p \le x} \sum_{|h_1| \le H_1} b_{h_1} \big( \e(\omega_1 h_1 (p+1-\beta_1)) + \e(\omega_1 h_1 (p - \beta_1)) \big) \\
&\qquad \times \sum_{0 < |h_2| \le H_2} a_{h_2} \big( \e(\omega_2 h_2 (p+1-\beta_2)) - \e(\omega_2 h_2 (p - \beta_2)) \big) \\
&\qquad \times \sum_{0 < |h_3| \le H_3} a_{h_3} \big( \e(h_3 (p+1)^\gamma) - \e(h_3 p^\gamma ) \big), \\
\cS_{76} &\defeq \sum_{p \le x} \sum_{|h_1| \le H_1} b_{h_1} \big( \e(\omega_1 h_1 (p+1-\beta_1)) + \e(\omega_1 h_1 (p - \beta_1)) \big) \\
&\qquad \times \sum_{0 < |h_2| \le H_2} a_{h_2} \big( \e(\omega_2 h_2 (p+1-\beta_2)) - \e(\omega_2 h_2 (p - \beta_2)) \big) \\
&\qquad \times \sum_{|h_3| \le H_3} b_{h_3} \big( \e(h_3 (p+1)^\gamma) + \e(h_3 p^\gamma ) \big), \\
\cS_{77} &\defeq \sum_{p \le x} \sum_{|h_1| \le H_1} b_{h_1} \big( \e(\omega_1 h_1 (p+1-\beta_1)) + \e(\omega_1 h_1 (p - \beta_1)) \big) \\
&\qquad \times \sum_{|h_2| \le H_2} b_{h_2} \big( \e(\omega_2 h_2 (p+1-\beta_2)) + \e(\omega_2 h_2 (p - \beta_2)) \big) \\
&\qquad \times \sum_{0 < |h_3| \le H_3} a_{h_3} \big( \e(h_3 (p+1)^\gamma) - \e(h_3 p^\gamma ) \big), \\
\cS_{78} &\defeq \sum_{p \le x} \sum_{|h_1| \le H_1} b_{h_1} \big( \e(\omega_1 h_1 (p+1-\beta_1)) + \e(\omega_1 h_1 (p - \beta_1)) \big) \\
&\qquad \times \sum_{|h_2| \le H_2} b_{h_2} \big( \e(\omega_2 h_2 (p+1-\beta_2)) + \e(\omega_2 h_2 (p - \beta_2)) \big) \\
&\qquad \times \sum_{|h_3| \le H_3} b_{h_3} \big( \e(h_3 (p+1)^\gamma) + \e(h_3 p^\gamma ) \big).
\end{align*}
\subsubsection{Estimation of $\cS_{71}$}
In order to show that $\cS_{71} \ll x^{\gamma-\eps}$, we apply the similar argument which derives the upper bound of $\cS_{51}$. Therefore, it is sufficient to prove that
\begin{equation}\label{eq:S71}
\max_{x/2<t\leqslant x}\sum_{0<|h_1|\le H_1}|h_1|^{-1}\sum_{0<|h_2|\le H_2}|h_2|^{-1}
\sum_{0<h_3\le H_3}|\cT|\ll x^{1-\varepsilon},	
\end{equation}
where
$$
\cT\defeq\sum_{x/2<n\le t}\Lambda(n)\cdot\e\big(h_3 n^\gamma+(\omega_1 h_1+\omega_2h_2)n
                    -\omega_1h_1\beta_1-\omega_2h_2\beta_2\big),
$$
which can be bounded by Lemma~\ref{lem:exp}. By a similar argument of the estimation of the left--hand side of \eqref{eq:S51}, we derive \eqref{eq:S71} by assuming that $\gamma > \frac{11}{12}$.
\subsubsection{Estimation of $\cS_{72}$}
The contribution from $h_3 = 0$ is
\begin{align*}
&\sum_{p \le x} H_3^{-1} \sum_{0 < |h_1| \le H_1} a_{h_1} \big( \e(\omega_1 h_1 (p+1-\beta_1)) - \e(\omega_1 h_1 (p - \beta_1)) \big) \\
&\qquad \times \sum_{0 < |h_2| \le H_2} a_{h_2} \big( \e(\omega_2 h_2 (p+1-\beta_2)) - \e(\omega_2 h_2 (p - \beta_2)) \big) \\
&\ll H_3^{-1} x^{1-\eps} \ll x^{\gamma - \eps},	
\end{align*}
by the estimation of $\cS_{43}$. The contribution from $h_3 \neq 0$ can be bounded by the same method of $\cS_{71}$.
\subsubsection{Estimations of $\cS_{73}$ and $\cS_{75}$}
The contribution of $\cS_{73}$ from $h_2 = 0$ is
\begin{align*}
&\sum_{p \le x} H_2^{-1} \sum_{0 < |h_1| \le H_1} a_{h_1} \big( \e(\omega_1 h_1 (p+1-\beta_1)) - \e(\omega_1 h_1 (p - \beta_1)) \big) \\
&\qquad \times \sum_{0 < |h_3| \le H_3} a_{h_3} \big( \e(h_3 (p+1)^\gamma) - \e(h_3 p^\gamma ) \big),
\end{align*}
which can be bounded by the method of $\cS_{51}$. The contribution of $\cS_{73}$ from $h_2 \neq 0$ can be bounded by the
similar arguments which deal with the upper bound of $\cS_{51}$. The estimation of $\cS_{75}$ is exactly the same as $\cS_{72}$.

\subsubsection{Estimations of $\cS_{74}$ and $\cS_{76}$}
The contribution of $\cS_{74}$ from $h_2 = h_3 = 0$ is
\begin{align*}
&\quad \sum_{p \le x} H_2^{-1} H_3^{-1} \sum_{0 < |h_1| \le H_1} a_{h_1} \big( \e(\omega_1 h_1 (p+1-\beta_1)) - \e(\omega_1 h_1 (p - \beta_1)) \big) \\
&\ll H_2^{-1} H_3^{-1} x^{1-\eps} \ll x^{\gamma - \eps},
\end{align*}
by the same method of $\cS_{23}$. The contribution of $\cS_{74}$ from $h_2 = 0$ and $h_3 \neq 0$ is
\begin{align*}
&\sum_{p \le x} H_2^{-1} \sum_{0 < |h_1| \le H_1} a_{h_1} \big( \e(\omega_1 h_1 (p+1-\beta_1)) - \e(\omega_1 h_1 (p - \beta_1)) \big) \\
&\qquad \times \sum_{0 < |h_3| \le H_3} b_{h_3} \big( \e(h_3 (p+1)^\gamma) + \e(h_3 p^\gamma ) \big),	
\end{align*}
which can be bounded by the similar method of $\cS_{51}$. Similarly, the contribution of $\cS_{74}$ from $h_2 \neq 0$ and $h_3 = 0$ is bounded by the same method of $\cS_{44}$. The contribution of $\cS_{74}$ from $h_2 \neq 0$ and $h_3 \neq 0$ can be
bounded by the similar arguments which deal with the upper bound of $\cS_{23}$. The estimation of $\cS_{76}$ is almost the same as $\cS_{74}$.

\subsubsection{Estimation of $\cS_{77}$}
The contribution from $h_1 = h_2 = 0$ can be bounded from the estimate (\ref{eq:GKa}). The contribution from $h_1 = 0$ and $h_2 \neq 0$ is bounded by the same method of $\cS_{43}$ and similar to the contribution from $h_1 \neq 0$ and $h_2 = 0$. The contribution from $h_1 \neq 0$ and $h_2 \neq 0$ can be bounded by following the similar argument of $\cS_{52}$.

\subsubsection{Estimation of $\cS_{78}$}
The contribution from $h_1 = h_2 = h_3 = 0$ is
$$
\ll \sum_{p \le x} H_1^{-1} H_2^{-1} H_3^{-1} \ll x^{\gamma - \eps}.
$$
The contribution from $h_1 \neq 0$ and $h_2 = h_3 = 0$ is bounded by same method of $\cS_{24}$ and similar to the contribution from $h_2 \neq 0$ and $h_1 = h_3 = 0$. The contribution from $h_1 \neq 0$, $h_2 \neq 0$ and $h_3 = 0$ is bounded by the same method of $\cS_{43}$. The contribution from $h_1 = h_2 = 0$ and $h_3 \neq 0$ can be bounded by following the argument on page
48 of \cite{GraKol}. The contribution from $h_1 \neq 0$, $h_2 = 0$ and $h_3 \neq 0$ is bounded by the same method of $\cS_{51}$ and similar to the contribution from $h_1 = 0$, $h_2 \neq 0$ and $h_3 \neq 0$. In the end, the contribution from $h_1 \neq 0$, $h_2 \neq 0$ and $h_3 \neq 0$ is bounded by the same method of $\cS_{71}$.

\section{Proof of Theorem 2}
For a Beatty sequence
$$
\cB_{\alpha, \beta} \defeq \fl{\alpha n + \beta},
$$
recall that $\omega \defeq \alpha^{-1}$. Similar to the construction of the proof of Theorem~\ref{thm:main} and by the definition of $\pi^{(c)}_{\alpha, \beta} (x)$, we have that
$$
\pi^{(c)}_{\alpha, \beta} (x) = \sum_{p \le x} \cX_{\alpha, \beta}(p) \cX^{(c)}(p) = S_1 + S_2 + S_3,
$$
where
\begin{align*}
S_1 &\defeq \sum_{p \le x} \omega \cX^{(c)}(p); \\
S_2 &\defeq \sum_{p \le x} \big(  \gamma p^{\gamma-1}+O(p^{\gamma-2}) \big) \\
&\qquad \times \big( \psi(-\omega(p+1-\beta))-\psi(-\omega(p-\beta)) \big), \\
S_3 &\defeq \sum_{p \le x} \big( \psi(-(p+1)^\gamma)-\psi(-p^\gamma) \big) \\
&\qquad \times \big( \psi(-\omega(p+1-\beta))-\psi(-\omega(p-\beta)) \big).
\end{align*}
$S_1$ can be estimated by the same method of $\cS_1$ in the proof of Theorem~\ref{thm:main}, which is
$$
S_1 = \frac{x^\gamma}{\alpha \log x} + O\( \frac{x}{\log^2 x} \).
$$
$S_2$ can be bounded by the same method of $\cS_2$ in the proof of Theorem~\ref{thm:main}. By the assumption that $\alpha$ is of finite type, it gives that
$$
S_2 \ll x^{\gamma - \eps}.
$$
$S_3$ can be estimated by the same method of $\cS_5$ in the proof of Theorem~\ref{thm:main}, which gives that
$$
S_3 \ll x^{\gamma - \eps},
$$
provided that $\gamma > \frac{11}{12}$.

\section{Proof of Theorem 3}
The method is similar to the proof of Theorem~\ref{thm:main} with more technical summations since there are more characteristic functions of Beatty sequences. Therefore, we only give a sketch proof here.
Let $\omega_i \defeq \alpha_i^{-1}$ for $i \in \{ 1, \dots, \xi \}$. By the similar construction, we have that
\begin{equation}\label{eq:mul}
\pi^{(c)}_{\alpha_1,\beta_1; \dots; \alpha_{\xi},\beta_{\xi}} (x) = \sum_{p \le x} \cX^{(c)}(p) \prod_{i=1}^{\xi} \cX_{\alpha_i, \beta_i} (p),
\end{equation}
and
$$
\cX_{\alpha_i, \beta_i} = \omega_i + \psi(-\omega_i (p+1-\beta_i))-\psi(-\omega_i(p-\beta_i)).
$$
We can break~\eqref{eq:mul} into several sums by the Vaaler's approximation, which is similar to the construction of the proof of Theorem~\ref{thm:main}. Since every sum with $b_h$ can be bounded by separating the contribution of $h = 0 $ and $h \neq 0$ and compared with the corresponding sum with $a_h$, we only give a sketch of the estimations of the summations with $a_h$.

The main term of~\eqref{eq:mul} is
$$
\sum_{p \le x} \omega_1 \cdots \omega_{\xi} \cX^{(c)}(p) = \frac{x^\gamma}{\alpha_1 \cdots \alpha_{\xi} \log x} + O\(\frac{x^\gamma}{\log^2 x}\),
$$
with $c\in(1,\frac{2817}{2426})$ by~\eqref{eq:PSthm}. Let $\sL$ be an arbitrary subset of $\{1, \dots, \xi\}$. Set
$$
H_1 = \cdots = H_{\xi} \defeq x^\eps \mand J \defeq x^{1-\gamma + \eps}.
$$
We claim that
$$
\pi^{(c)}_{\alpha_1,\beta_1; \dots; \alpha_{\xi},\beta_{\xi}} (x) = \frac{x^\gamma}{\alpha_1 \cdots \alpha_{\xi} \log x} + O\(\frac{x^\gamma}{\log^2 x} + \cT_1 + \cT_2\),
$$
where
$$
\cT_1\defeq\sum_{p\le x}p^{\gamma-1}\prod_{i\in\sL}\sum_{0<|h_i|\le H_i}a_{h_i}\big(\e(\omega_ih_i(p+1-\beta_i))-
\e(\omega_ih_i(p-\beta_i))\big)
$$
and
\begin{align*}
\cT_2 &\defeq \sum_{p \le x} \Bigg( \prod_{i \in \sL} \sum_{0 < |h_i| \le H_i} a_{h_i} \big( \e(\omega_i h_i (p+1-\beta_i)) - \e(\omega_i h_i (p - \beta_i)) \big) \\
&\qquad\qquad \times \sum_{0 < |j| \le J} a_{j} \big( \e(j (p+1)^\gamma) - \e(j p^\gamma ) \big) \Bigg).
\end{align*}	
We consider $\cT_1$ firstly. By the same method of $\cS_{43}$, in order to show that $\cT_1 \ll x^{\gamma - \eps}$, it is sufficient to prove that
\begin{equation}\label{eq:cT1}
\max_{x/2<u\leqslant x}\prod_{i \in \sL}\sum_{0<|h_i|\le H_i}|h_i|^{-1}|\cT_3|\ll x^{1-\varepsilon},
\end{equation}
where
$$
\cT_3 \defeq \sum_{x/2<n\leqslant u}\Lambda(n)\e\(\(\sum_{i \in \sL}\omega_ih_i\)n\).
$$
By Lemma~\ref{lem:ttr} and Lemma \ref{lem:finite-type}, the left--hand side of \eqref{eq:cT1} is
$$
\ll\(\prod_{i\in\sL}\sum_{0<|h_i|\le H_i}|h_i|^{-1}\) \Big(x^{1-1/(2\tau)+\varepsilon}+x^{1-\varepsilon}\Big)
\ll x^{1-\varepsilon},
$$
provided that $\alpha_i$ is of finite type with $i\in\{1,2,\dots,\xi\}$.

Now we consider $\cT_2$. By the similar method of $\cS_{71}$, it is sufficient to prove that
\begin{equation}\label{eq:cT2}
\max_{x/2<t\leqslant x} \prod_{i \in \sL} \sum_{0 < |h_i| \le H_i} |h_i|^{-1} \sum_{0 < j \le J} |\cT_4| \ll x^{1-\varepsilon}, \end{equation}
where
$$
\cT_4\defeq\sum_{x/2<n\le t}\Lambda(n)\cdot\e\(jn^\gamma + \sum_{i\in\sL} (\omega_ih_in-\omega_ih_i\beta_i) \),
$$
which can be bounded by Lemma~\ref{lem:exp}. By a similar argument of the estimation of the left--hand side of \eqref{eq:S51}, we derive \eqref{eq:cT2} under the condition that $\gamma > \frac{11}{12}$.

\section{Acknowledgement}
The first author is supported by the National Natural Science Foundation of China (Grant No. 11901447), the
China Postdoctoral Science Foundation (Grant No. 2019M653576), the Natural Science Foundation of Shaanxi
Province (Grant No. 2020JQ009), and the China Scholarship Council (Grant No. 201906285058).

The second author (Corresponding author) is supported by the National Natural Science Foundation of China
(Grant No. 11901566, 11971476, 12071238), the Fundamental Research Funds for the Central Universities
(Grant No. 2021YQLX02), and National Training Program of Innovation and Entrepreneurship for Undergraduates
(Grant No. 202107010).

The third author is supported by the National Natural Science Foundation of China (Grant No. 12001047, 11971476),
and the Scientific Research Funds of Beijing Information Science and Technology University (Grant No. 2025035).


\begin{thebibliography}{99}
	
\bibitem{Bak2}
R. C. Baker,
Sums of two relatively prime cubes,
\emph{Acta Arith}, \textbf{129} (2007), no. 2, 103--146.



\bibitem{Daven}
H.~Davenport,
\emph{Multiplicative number theory},
Springer--Verlag, New York--Berlin, 1980.



\bibitem{GraKol}
S. W. Graham, G. Kolesnik,
\emph{Van der Corput's method of exponential sums},
Cambridge University Press, Cambridge, 1991.



\bibitem{Guo1}
V.~Z.~Guo,
Piatetski--Shapiro primes in a Beatty sequence,
\emph{J.  Number Theory}, \textbf{156} (2015), 317--330.



\bibitem{Harm2}
G. Harman,
Primes in intersections of Beatty sequences,
\emph{J. Integer Seq.}, \textbf{18} (2015), no. 7, Article 15.7.3, 12 pp.




\bibitem{HB}
D. R. Heath--Brown,
Prime numbers in short intervals and a generalized Vaughan identity,
\emph{Canadian J. Math.}, \textbf{34} (1982), no. 6, 1365--1377.




\bibitem{IwKo}
H. Iwaniec, E. Kowalski,
\emph{Analytic number theory}.
American Mathematical Society, Providence, RI, 2004.




\bibitem{Khin}
A. Khintchine,
Zur metrischen Theorie der diophantischen Approximationen,
\emph{Math. Z.}, \textbf{24} (1926), no. 1, 706--714.




\bibitem{Kara}
A. A. Karatsuba,
\emph{Basic analytic number theory},
Springer--Verlag, Berlin, 1993.




\bibitem{PS}
I. I. Piatetski-Shapiro,
On the distribution of prime numbers in the sequence of the form $\fl{f(n)}$,
\emph{Mat.\ Sb.}, \textbf{33} (1953), 559--566.




\bibitem{RiSa}
J. Rivat, S. Sargos,
Nombres premiers de la forme $\fl{n^c}$,
\emph{Canad. J. Math.}, \textbf{53} (2001), no. 2, 414--433.



\bibitem{RiWu}
J. Rivat, J. Wu,
Prime numbers of the form  $[n^c]$,
\emph{Glasg. Math. J.}, \textbf{43} (2001), no. 2, 237--254.




\bibitem{Roth1}
K. F. Roth, Rational approximations to algebraic numbers,
\emph{Mathematika}, \textbf{2} (1955), 1--20.




\bibitem{Roth2}
K. F. Roth, Corrigendum to ``Rational approximations to algebraic numbers'',
\emph{Mathematika}, \textbf{2} (1955), 168.




\bibitem{Sarg}
P. Sargos,
Points entiers au voisinage d'une courbe, sommes trigonom\'{e}triques courtes et paires d'exposants,
\emph{Proc. London Math. Soc. (3)},  \textbf{70} (1995), no. 2, 285--312.




\bibitem{Srin}
B.~R.~Srinivasan,
The lattice point problem of many-dimensional hyperboloids II,
\emph{Acta Arith.}, \textbf{8} (1963), 173--204.




\bibitem{Vaal}
J.~D.~Vaaler,
Some extremal functions in Fourier analysis,
\emph{Bull. Amer. Math. Soc.}, \textbf{12} (1985), no. 2, 183--216.




\bibitem{Vino}
I.~M.~Vinogradov,
A new estimate of a certain sum containing primes (Russian).
\emph{Rec.\ Math.\ Moscou, n.\ Ser.} 2(44) (1937), no.~5, 783--792.
English translation:
New estimations of trigonometrical sums containing primes.
\emph{C.\ R.\ (Dokl.) Acad.\ Sci.\ URSS, n.\ Ser.}
17 (1937), 165--166.


\end{thebibliography}
\end{document}